    \newcommand{\href}[2]{#2}
\theoremstyle{plain}
  \newtheorem{lemma}[equation]{Lemma}
  \newtheorem{proposition}[equation]{Proposition}
  \newtheorem{theorem}[equation]{Theorem}
  \newtheorem{corollary}[equation]{Corollary} 
    \newtheorem{question}[equation]{Question}
    \newtheorem{conjecture}[equation]{Conjecture}
\theoremstyle{definition}
  \newtheorem{definition}[equation]{Definition}
\theoremstyle{remark}
\renewcommand{\thesection}{\arabic{section}}
\renewcommand{\theequation}{\thesection.\arabic{equation}}
 \DeclareFontFamily{U}{manual}{}
 \DeclareFontShape{U}{manual}{m}{n}{ <->  manfnt }{}
 \newcommand{\manfntsymbol}[1]{%
    {\fontencoding{U}\fontfamily{manual}\selectfont\symbol{#1}}}
\endgroup\end{trivlist}}
 \newenvironment{example}[1][]{
   \refstepcounter{equation}
   \begin{proof}[Example~\theequation%
   \@ifnotempty{#1}{ (#1)}.]
   }
  {\end{proof}}
  \DeclareFontFamily{OT1}{pzc}{}
  \DeclareFontShape{OT1}{pzc}{m}{it}{<-> s * [1.100] pzcmi7t}{}
  \DeclareMathAlphabet{\mathpzc}{OT1}{pzc}{m}{it}
\newif\ifhascomments \hascommentstrue
  \newcommand{\dan}[1]{{\color{red}[[\ensuremath{\bigstar\bigstar\bigstar} #1]]}}
  \newcommand{\matt}[1]{{\color{red}[[\ensuremath{\spadesuit\spadesuit\spadesuit} #1]]}}
  \newcommand{\dan}[1]{}
  \newcommand{\matt}[1]{}
\newcommand{\<}{\langle}
\renewcommand{\>}{\rangle} 
\renewcommand{\AA}{\mathbb{A}}
\DeclareMathOperator{\Aut}{\ensuremath{\mathcal{A}\kern-.125em\mathpzc{ut}}}
\newcommand{\CC}{\mathbb C}
\DeclareMathOperator{\Endo}{\ensuremath{\mathcal{E}\kern-.125em\mathpzc{nd}}}
\DeclareMathOperator{\Hom}{\ensuremath{\mathcal{H}\kern-.125em\mathpzc{om}}}
\DeclareMathOperator{\lra}{\longrightarrow}
\DeclareMathOperator{\ord}{ord}
\newcommand{\PP}{\mathbb{P}}
\newcommand{\QQ}{\mathbb Q}
\newcommand{\RR}{\mathbb R}
\renewcommand{\setminus}{\smallsetminus}
\DeclareMathOperator{\Span}{Span}
\DeclareMathOperator{\stab}{Stab}
\newcommand{\T}{\mathcal T}
\newcommand{\ZZ}{\mathbb{Z}}
 \def\ari[#1]{\ar@{^(->}[#1]}
 \def\are[#1]{\ar[#1]^{\txt{\'et}}}
 \def\areh[#1]{\ar[#1]|{\txt{$H$-eq}}^{\txt{\'et}}}
 \def\ars[#1]{\ar@{->>}[#1]}
 \newcommand{\dplus}{\ar@{}[d]|{\mbox{$\oplus$}}}
 \newcommand{\dtimes}{\ar@{}[d]|{\mbox{$\times$}}}
\newcolumntype{L}{>{$}l<{$}}
\title{What fraction of an $S_n$-orbit can lie on a hyperplane?}
\author{Jiahui Huang}
\address{University of Waterloo \\
Department of Pure Mathematics \\
Waterloo, Ontario \\
Canada  N2L 3G1}
\email{j346huang@edu.uwaterloo.ca}
\author{David McKinnon}
\address{University of Waterloo \\
Department of Pure Mathematics \\
Waterloo, Ontario \\
Canada  N2L 3G1}
\email{dmckinnon@uwaterloo.ca}
\author{Matthew Satriano}
\address{University of Waterloo \\
Department of Pure Mathematics \\
Waterloo, Ontario \\
Canada  N2L 3G1}
\email{msatrian@uwaterloo.ca}
\thanks{The last two authors were partially supported by Discovery Grants from the Natural Sciences and Engineering Research Council.}
\begin{document}

\begin{abstract}
Consider the $S_n$-action on $\RR^n$ given by permuting coordinates. This paper addresses the following problem: compute $\max_{v,H} |H\cap S_nv|$ as $H\subset\RR^n$ ranges over all hyperplanes through the origin and $v\in\RR^n$ ranges over all vectors with distinct coordinates that are not contained in the hyperplane $\sum x_i=0$. We conjecture that for $n\geq3$, the answer is $(n-1)!$ for odd $n$, and $n(n-2)!$ for even $n$. We prove that if $p$ is the largest prime with $p\leq n$, then $\max_{v,H} |H\cap S_nv|\leq \frac{n!}{p}$. In particular, this proves the conjecture when $n$ or $n-1$ is prime.
\end{abstract}

\maketitle
\tableofcontents

\section{Introduction}
Given a linear action of a Lie group $\mathcal{G}$ on a finite-dimensional vector space $W$, a question of central importance is to determine when the quotient $W/\mathcal{G}$ is smooth. This problem and variants of it have a long history in invariant theory with fundamental classification results having been obtained in \cite{ShTo:54,Ch:55,KPV:76,Sch:78b,Sch:78a,AdGo:79,Lit:89}. In a recent preprint \cite{EdSa:19}, \emph{cf.}~\cite{Sch:94}, Edidin and the third author considered the problem of giving an effective group theoretic characterization for when $W/\mathcal{G}$ is smooth, and related it to a variant of 
the following concrete question.

\begin{question}
\label{q:hyperplane}
Let $G$ be a finite group and $V$ a finite-dimensional $G$-representation over a field $k$. Let $V=\bigoplus_i V_i$ be the decomposition into irreducible representations. What is
\[
\max_{v,H} |H\cap Gv|
\]
as $H\subset V$ ranges over all hyperplanes through the origin, and $v\in V\setminus\bigcup_i V_i$ ranges over all vectors whose orbit satisfies $|Gv|=|G|$?
\end{question}

In \cite{EdSa:19}, the authors were primarily concerned with the case where $G=S_n$ and $k=\RR$, and obtained bounds sufficient for their purposes, but the question of a general bound remained. We make the following conjecture:

\begin{conjecture}
\label{conj:max-bound}
Let $n\geq3$. As $v$ ranges over all vectors in $\RR^n$ with distinct coordinates not in the hyperplane $\sum_i x_i=0$, and as $H\subset\RR^n$ ranges over all hyperplanes through the origin, we have
\[
\max_{v,H} |H\cap S_nv|=
\begin{cases}
(n-1)!,& n \mathrm{\ is\ odd}\\
n(n-2)!,& n \mathrm{\ is\ even}
\end{cases}
\]
\end{conjecture}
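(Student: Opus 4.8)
The plan is to establish Conjecture~\ref{conj:max-bound} by proving matching lower and upper bounds, with the caveat that the upper bound I can prove only reaches the conjectured value when $n$ or $n-1$ is prime; the general case is expected to be the main obstacle.

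\textbf{Lower bounds.} For odd $n$, take $v=(0,1,2,\dots,n-1)$ and $H=\{x_1=0\}$: the coordinates of $v$ are distinct, $\sum_i v_i=\binom n2\ne 0$, and $\sigma v\in H$ exactly when $\sigma(1)=1$, so $|H\cap S_nv|=(n-1)!$. For even $n$, take $v=(1,2,\dots,n)$ and let $H$ have normal $a=(\tfrac{n-2}2,\tfrac{n-2}2,-1,\dots,-1)$. Grouping the coordinate positions by the value of $a$, one finds that $\sigma v\in H$ iff the two positions $j$ with $\sigma(j)\in\{1,2\}$ have $v$-values summing to $\tfrac2n\sum_i v_i=n+1$; there are exactly $n/2$ such pairs $\{i,n+1-i\}$, and each one is realized by $2!\,(n-2)!$ permutations, giving $|H\cap S_nv|=n(n-2)!$. (All that is used about $(1,\dots,n)$ is that its coordinates can be perfectly matched into pairs of equal sum.)

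\textbf{Upper bound $|H\cap S_nv|\le n!/p$.} Write $H=a^\perp$ with $a\ne 0$, let $a$ take its distinct values $\alpha_1,\dots,\alpha_r$ on blocks of sizes $m_1,\dots,m_r$, and let $D$ be the number of ordered set-partitions $(J_1,\dots,J_r)$ of $\{1,\dots,n\}$ with $|J_s|=m_s$ and $\sum_s\alpha_s\sum_{j\in J_s}v_j=0$. The bookkeeping above gives $|H\cap S_nv|=\bigl(\prod_s m_s!\bigr)\,D$, so it suffices to prove $D\le\tfrac1p\binom{n}{m_1,\dots,m_r}$. Since $p\le n$, fix a $p$-element subset $T$ and let $\ZZ/p=\langle\gamma\rangle$ act on these partitions by cyclically permuting the entries lying in $T$; equivalently, this is the action of a $p$-cycle subgroup by right multiplication on $S_n$, whose orbits are the $n!/p$ left cosets. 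On a free orbit one shows that not all $p$ translates can be ``balanced'': the crucial input is that, because the coordinates of $v$ are pairwise distinct and the $p$-th cyclotomic polynomial is irreducible over $\QQ$, the $p$ cyclic rotations within $T$ of any nonempty proper subset of $T$ never all have the same $v$-sum. The heart of the argument is then a rigidity statement pushing the per-orbit count all the way down to the $1/p$ proportion: each balanced partition imposes a linear condition on the normal vector $a$, so too many balanced partitions in (or across) cosets would over-determine $a$; carefully quantifying this over-determination should give $D\le\tfrac1p\binom{n}{m_1,\dots,m_r}$, i.e.\ on a typical coset the normal can force $\sigma v\in H$ for at most one representative.

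\textbf{Conclusion and main obstacle.} When $n$ is prime this yields the bound $n!/n=(n-1)!$, and when $n-1$ is prime (so $n$ is even) it yields $n!/(n-1)=n(n-2)!$, matching the constructions above and hence proving the conjecture in these cases. The genuinely hard part is the general conjecture: when neither $n$ nor $n-1$ is prime there is a gap between $n!/p$ and the conjectured value. Closing it appears to require either a sharper form of the cyclic symmetrization that exploits the \emph{real} structure of $v$ and $a$ (the naive Fourier bound loses a factor matching the conjugate pairs of frequencies), or a different, more global argument bounding how many points of the orbit $S_nv$ can be coplanar.
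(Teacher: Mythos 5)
Your lower-bound constructions are correct and essentially match the paper's Examples~\ref{ex:bound-is-attained-general} and~\ref{ex:bound-is-attained-even}, and your bookkeeping $|H\cap S_nv|=\bigl(\prod_s m_s!\bigr)D$ is sound. The cyclotomic observation is also correct: if $S\subsetneq T$ is a nonempty proper subset of a $p$-set $T$ and $v$ has distinct coordinates, then the $p$ cyclic rotations of $S$ inside $T$ cannot all have the same $v$-sum (by Fourier analysis over $\ZZ/p$ and the fact that no proper nonempty subsum of the $p$-th roots of unity vanishes). But this only shows that on each free $\ZZ/p$-orbit the number of balanced partitions is at most $p-1$, which is very far from the needed bound of one per orbit \emph{on average}. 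Your ``rigidity statement'' — that balanced partitions over-determine the normal $a$ so that $D\le\tfrac1p\binom{n}{m_1,\dots,m_r}$ — is exactly where the theorem lives, and as written it is a hope, not a proof. There is no a priori reason a single orbit cannot contain two or more balanced partitions, nor is there an obvious mechanism to trade excess in one orbit against deficit in another. In addition, the passage from the action on $S_n$ to the action on ordered set-partitions (the two are not the same; the map $S_n\to\{\text{partitions}\}$ is $\prod_s m_s!$-to-one) and the presence of non-free orbits (partitions in which $T$ lies entirely inside one block) would both need to be handled carefully before the averaging argument could even be set up cleanly.

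For contrast, the paper's proof of the $n!/p$ bound takes an entirely different route: it introduces the elementary symmetric curve $C\subset\PP^n$ cut out by the equations $e_j(x)=e_j(v)$ for $j=1,\dots,n-1$, applies B\'ezout's theorem to reduce the problem to showing that each irreducible component of $C$ contained in $H$ has $p-1$ ``companion'' components disjoint from $H\cap S_nv$ (Lemma~\ref{lem:n-1-not-contained}), and then analyzes $\stab(C_i)$ via Burnside's theorem and a graph on the components in the hard dihedral case. That machinery is what gives the missing averaging argument you need. Your symmetrization idea is a natural first attempt and not a bad one, but until the rigidity step is actually proved — and I do not see how to do so without introducing comparably heavy structure — the argument for the upper bound has a genuine gap.
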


Let us motivate how these specific bounds arise.

\begin{example}
\label{ex:bound-is-attained-general}
Given any $v=(c_1,\dots,c_n)$ with distinct coordinates, consider the hyperplane $H$ whose normal vector is $(c_n,c_n,\dots,c_n,-\sum_{i=1}^{n-1}c_i)$. Then $H$ contains $(c_{\sigma(1)},\dots,c_{\sigma(n-1)},c_n)$ for all $\sigma\in S_{n-1}$, so $|H\cap S_nv|\geq(n-1)!$.
\end{example}

\begin{example}
\label{ex:bound-is-attained-even}
Let $n\geq3$. Consider the vector $v=(1,2,\dots,n)$ and the hyperplane $H$ with normal vector $(-\sum_{i=2}^{n-1}i,-\sum_{i=2}^{n-1}i,1+n,\dots,1+n)$. 
Then $H$ contains every element of $S_nv$ whose first two coordinates sum to $1+n$. When $n$ is odd, there are $(n-1)!$ such elements of $S_nv$. When $n$ is even, there are $n(n-2)!$ such elements.
\end{example}

By the above two examples, the bounds in Conjecture~\ref{conj:max-bound} are the smallest possible. The main result of this paper is:

\begin{theorem}
\label{thm:conj-for-primes}
Let $n\geq 2$ and let $p$ be the largest prime with $p\leq n$.  Then 
\[
\max_{v,H} |H\cap S_nv|\leq \frac{n!}{p}.
\]
In particular, if $n=p$ or $n=p+1$, then Conjecture \ref{conj:max-bound} is true.
\end{theorem}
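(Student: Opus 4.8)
The plan is to reduce, by a coset decomposition of $S_n$, to a sharp bound for the group $S_p$ alone, and then to prove that bound using the primality of $p$.

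\textbf{Reduction.} Write $H=\ker\langle a,-\rangle$ with $a\in\RR^n$ nonzero, and $v=(c_1,\dots,c_n)$ with distinct $c_i$ and $\sum_i c_i\neq0$. Since $\langle a,\sigma v\rangle$ for $\sigma\in S_n$ runs over the values $\langle a,w\rangle$ with $w$ a rearrangement of $v$, we have $|H\cap S_nv|=\#\{\sigma\in S_n:\langle a,\sigma v\rangle=0\}$. If $a$ is a multiple of $(1,\dots,1)$, then $H$ is a multiple of $\{\sum_i x_i=0\}$; as every rearrangement of $v$ has coordinate sum $\sum_i c_i\neq0$, this count is $0$ and we are done. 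Otherwise $a$ is non-constant: fix a $p$-element set $Q\subseteq\{1,\dots,n\}$ with $a|_Q$ non-constant (choose $Q$ containing two indices where $a$ differs; possible since $p\le n$), let $W\cong S_p$ be the subgroup of $S_n$ permuting the coordinates indexed by $Q$, and split $S_n$ into its $n!/p!$ right cosets $W\tau$. On a fixed coset the coordinates of $\tau v$ outside $Q$ are frozen, so for $\kappa\in W$ one computes $\langle a,\kappa\tau v\rangle=L_0(\tau)+\sum_{\ell=1}^p d_\ell c'_{\pi(\ell)}$, where $d:=a|_Q$ is non-constant and independent of $\tau$, $c'$ is the restriction of $v$ to $\tau^{-1}(Q)$ (hence has distinct entries), $L_0(\tau)$ is a constant, and $\pi$ runs over $S_p$ as $\kappa$ does. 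Thus this coset contributes $\#\{\pi\in S_p:\sum_\ell d_\ell c'_{\pi(\ell)}=-L_0(\tau)\}$ to $|H\cap S_nv|$.

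\textbf{The crux.} Everything therefore follows from the statement: for $p$ prime, $d\in\RR^p$ non-constant, $c'\in\RR^p$ with distinct entries, and $L\in\RR$,
\[
\#\{\pi\in S_p:\textstyle\sum_\ell d_\ell c'_{\pi(\ell)}=L\}\ \le\ (p-1)!.
\]
Granting this, each of the $n!/p!$ cosets contributes at most $(p-1)!$, whence $|H\cap S_nv|\le(p-1)!\cdot n!/p!=n!/p$. The consequences for the conjecture are immediate: when $n=p$ one has $Q=\{1,\dots,n\}$, a single coset, and $(n-1)!=n!/p$ matches Example~\ref{ex:bound-is-attained-general}; when $n=p+1$ (so $n$ is even and $p=n-1$), $n!/(n-1)=n(n-2)!$ matches Example~\ref{ex:bound-is-attained-even}.

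\textbf{Proving the crux, and the main obstacle.} Here primality must be used in an essential way: for composite sizes the bound is false (the hyperplane of Example~\ref{ex:bound-is-attained-even} with $n=4$ carries $4\cdot 2!=8>3!$ rearrangements of $v$). I would group $S_p$ into the $(p-1)!$ cosets of $\langle\rho\rangle$ for a $p$-cycle $\rho$; along a coset the quantity $\sum_\ell d_\ell c'_{\pi(\ell)}$ traces a cyclic cross-correlation sequence of a rearrangement of $d$ against $c'$, and since the $p$ entries of such a sequence always sum to the coset-independent constant $(\sum_\ell d_\ell)(\sum_\ell c'_\ell)$, no coset can lie entirely in the hyperplane unless that sequence is constant. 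Replacing $d$ by the block-constant vector supported on its level sets $B_1,\dots,B_t$ (sizes $m_1,\dots,m_t$, $t\ge2$), the claim becomes: among the $p!/(m_1!\cdots m_t!)$ ordered set-partitions of $\{1,\dots,p\}$ of that block type, at most a $1/p$-fraction $\vec B$ satisfy $\sum_s d^{(s)}\!\sum_{j\in B_s}c'_j=L$. Primality makes the cyclic $\ZZ/p$-action on these partitions free (no nontrivial power of a $p$-cycle fixes a proper nonempty set), so their number is exactly $p$ times the number of orbits, and distinctness of the $c'_j$ forbids a whole orbit from lying in the level set. The difficulty — and what I expect to be the main technical hurdle — is that the per-orbit counts do not drop to $\le 1$ individually, so a genuinely global argument is needed: one must bound how many $\ZZ/p$-orbits the level set can meet (using that a nonconstant correlation sequence on $\ZZ/p$ is a polynomial of degree $<p$ in a $p$-th root of unity, hence has few roots) and balance the orbits carrying several solutions against those carrying none, perhaps after first isolating the extremal block type $t=2$, $\{m_1,m_2\}=\{1,p-1\}$ and reducing the general type to it.
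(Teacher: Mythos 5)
Your reduction is genuinely nice and, as far as it goes, correct: decomposing $S_n$ into cosets of a copy of $S_p$ acting on a $p$-element set $Q$ where the normal vector $a$ is non-constant does reduce the theorem to a single bound about $S_p$, and the arithmetic $(p-1)!\cdot n!/p!=n!/p$ checks out. In fact, after centering ($d\mapsto d-\tfrac{\sum d}{p}(1,\dots,1)$ is harmless once $\sum c'=0$, and $c'\mapsto c'+\lambda(1,\dots,1)$ is harmless once $\sum d=0$), your ``crux'' is \emph{equivalent} to the $n=p$ case of Theorem~\ref{thm:conj-for-primes} as stated in the paper, so this would be a clean alternative to the paper's Proposition~\ref{prop:conj-odd-->even}. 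But that equivalence cuts both ways: the crux \emph{is} the theorem for $n=p$, which is where all the work lies, and you have not proved it.

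The sketch you give for the crux has a concrete gap. Passing to the ordered set-partitions of $\{1,\dots,p\}$ of the block type of $d$ and using freeness of the $\ZZ/p$-action is fine, and the observation that the $p$ values along an orbit sum to $(\sum d)(\sum c')$ correctly rules out the ``wrong'' levels $L$. But the assertion that ``distinctness of the $c'_j$ forbids a whole orbit from lying in the level set'' is unjustified. Having a full orbit in the level set is equivalent to the cyclic cross-correlation $V(m)=\sum_j d_j c'_{\rho^m(j)}$ being constant, i.e.\ for every nonzero frequency $\xi$ either $\widehat{d}(\xi)=0$ or $\widehat{c'}(\xi)=0$. Over $\QQ$ one could invoke the minimal polynomial of a primitive $p$-th root of unity to rule this out, but $d$ and $c'$ are real: the kernel of $u\mapsto\sum_j u_j\zeta^j$ on $\RR^p$ has real codimension $2$, so non-constant real vectors can very well have vanishing Fourier coefficients, and nothing stops $\widehat{d}$ and $\widehat{c'}$ from vanishing on complementary frequency pairs. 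So whole orbits can \emph{a priori} lie in the level set, and — as you yourself flag — the compensating ``balancing'' of orbits with many solutions against orbits with none is not carried out. This is precisely the hard core of the problem; the paper spends Sections~\ref{sec:AG-input}--\ref{sec:completing-pf} on it, using the elementary symmetric curve, a B\'ezout count, Burnside's theorem on transitive subgroups of $S_p$, and in the dihedral-stabilizer case a delicate second-neighborhood graph argument. None of that is replaced by your sketch, so at present the proposal reduces the theorem to a statement of essentially equal difficulty and then stops.
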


The proof of Theorem~\ref{thm:conj-for-primes} involves tools from algebraic geometry, representation theory, combinatorics, and graph theory. The proof proceeds in several steps. 
Using techniques from algebraic geometry, we reduce the problem to one concerning intersections of hyperplanes with a specific curve $C$. We then divide the proof into two cases depending on whether or not the irreducible components $C_i$ of $C$ have dihedral stabilizers. We handle the non-dihedral case using techniques from combinatorics and representation theory. The dihedral case is the most involved. We construct a graph whose vertices are the irreducible components $C_i\subset H$. Assuming the existence of a hyperplane $H$ that violates Theorem~\ref{thm:conj-for-primes}, we show the existence of a vertex $C_0$ whose neighbors have large degree relative to $C_0$. A careful analysis of the second order neighborhood of $C_0$ yields a contradiction.

Additionally, we prove the following two results. The first shows that the conjecture holds for generic $v$ and the second gives an inductive statement, showing that the case of even $n$ follows from that of odd $n$.

\begin{proposition}
\label{prop:conj-for-generic-v}
Let $n\geq2$. There is a nonempty Zariski open subset $U$ of $\RR^n$ such that for any $v\in U\subset\RR^n$, we have $\max_H |H\cap S_nv|=(n-1)!$ as $H$ ranges over all hyperplanes in $\RR^n$. 
\end{proposition}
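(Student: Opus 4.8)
The plan is as follows. By Example~\ref{ex:bound-is-attained-general} the bound $\max_H|H\cap S_nv|\ge(n-1)!$ holds for \emph{every} $v$ with distinct coordinates, so only the reverse inequality, for generic $v$, needs proof. Writing a hyperplane through the origin as $a^\perp$, a point $\sigma v$ lies on it exactly when $\langle\sigma v,a\rangle=0$, so some $H$ has $|H\cap S_nv|\ge(n-1)!+1$ if and only if there is a subset $T\subseteq S_n$ with $|T|=(n-1)!+1$ for which the orbit vectors $\{\sigma v:\sigma\in T\}$ fail to span $\RR^n$ (given any such $T$, these vectors lie in some linear hyperplane, which then meets the orbit in at least $(n-1)!+1$ points since $\sigma\mapsto\sigma v$ is injective). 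For each of the finitely many such $T$, the locus
\[
B_T=\{\,v\in\RR^n:\ \rank\{\sigma v:\sigma\in T\}\le n-1\,\}
\]
is Zariski-closed, being cut out by the $n\times n$ minors of the $n\times|T|$ matrix with columns $\sigma v$. A finite union of proper Zariski-closed subsets of $\RR^n$ is again proper and closed, so it is enough to show that each $B_T$ is a \emph{proper} subset; one then lets $U$ be the complement of $\bigcup_T B_T$ inside the open locus of vectors with distinct coordinates, and the lower bound gives equality there.

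The statement thus reduces to a purely combinatorial assertion: \emph{for every $(n-1)!+1$-element subset $T\subseteq S_n$, the vectors $\{\sigma v:\sigma\in T\}$ span $\RR^n$ for generic $v$.} Equivalently, the vertices of the permutohedron (the convex hull of $S_nv$) indexed by $T$ affinely span the affine hyperplane $\{\sum_i x_i=\sum_i v_i\}$ that contains them; since that hyperplane does not pass through the origin, the affine and linear spanning conditions coincide. Since a nonzero real polynomial does not vanish identically, to prove $B_T\ne\RR^n$ it suffices to produce $n$ elements $\sigma_1,\dots,\sigma_n\in T$ for which $\det[\,\sigma_1 v\mid\dots\mid\sigma_n v\,]$ is a nonzero polynomial in $v$. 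A convenient sufficient condition is that the $\sigma_i$ can be chosen so that, specializing $v$ to some standard basis vector $e_k$, the orbit vectors $\sigma_1 v,\dots,\sigma_n v$ become $n$ distinct standard basis vectors, in which case the determinant is $\pm1$; this works as soon as, for some coordinate $k$, the set $T$ meets all $n$ cosets of the corresponding point-stabilizer $S_{n-1}\subset S_n$.

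The main difficulty is to carry this out for \emph{every} such $T$, because the single-coordinate criterion can fail: a set $T$ of size $(n-1)!+1$ may be confined at each coordinate to a proper set of values — for example $T$ can lie inside a set $\{\sigma\in S_n:\sigma\tau\text{ has no fixed point}\}$ once $n\ge4$. To treat the remaining $T$ the plan is to extract the needed $n$ orbit vectors by a Hall-type argument, building a bipartite ``positions versus values'' incidence from $T$ and using $|T|>(n-1)!$ to verify the marriage condition and obtain a system of distinct representatives that forces a nonzero determinant, if necessary after replacing the specialization $v=e_k$ by a monomial specialization $v=(t^{a_1},\dots,t^{a_n})$ so that the determinant becomes a generalized Vandermonde whose extreme term cannot cancel. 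Two alternative routes are to pass the question through the curve $C$ used in the proof of Theorem~\ref{thm:conj-for-primes} and conclude by a generic-transversality argument, or to proceed representation-theoretically, describing the space of linear relations among $\{\sigma v\}$ for generic $v$ as an explicit subspace of $\RR[S_n]$ and checking that its intersection with the coordinate subspace $\RR^T$ has the expected dimension $(n-1)!+1-n$. In every approach essentially all of the work lies in this one combinatorial step, and the rest is formal.
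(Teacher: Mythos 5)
Your reductions match the paper's exactly: the lower bound $(n-1)!$ comes for free from Example~\ref{ex:bound-is-attained-general}; a violation of the upper bound means some $(n-1)!+1$ of the orbit vectors fail to span $\RR^n$; for each of the finitely many subsets $T$ of that size the bad locus is a proper Zariski-closed set cut out by minors; and the complement of the union works. Up to this point you are on the paper's track. But you then stop at what you yourself call ``the one combinatorial step,'' namely the claim that for every $(n-1)!+1$-element $T\subseteq S_n$ the vectors $\{\sigma v:\sigma\in T\}$ span $\RR^n$ for generic $v$. You correctly observe that the naive specialization $v=e_k$ does not always work (your derangement example is exactly right), and you then list four candidate strategies — a Hall marriage argument on a positions-versus-values bipartite graph, a generalized Vandermonde via a monomial specialization, passing through the elementary symmetric curve $C$, and a representation-theoretic dimension count — but you do not carry out any of them. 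That is a genuine gap: the whole content of the proposition is concentrated in precisely this step, and none of the routes is verified to close. In particular it is not clear that the Hall condition follows from $|T|>(n-1)!$ alone, and the Vandermonde idea needs an argument that the extremal monomial survives.

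The paper settles this step by induction on $n$ combined with the explicit hyperplane of Example~\ref{ex:bound-is-attained-general}, and it is worth noting that this mechanism is not among your four proposals. Fix the coordinate $n$. Either every row of $M_\omega$ contains an entry $x_n$ (i.e., $T$ meets all $n$ cosets of $\stab(n)$), in which case $v=e_n$ already forces a nonvanishing minor — this is your single-coordinate criterion — or else $x_n$ is confined to at most $n-1$ rows, so by pigeonhole some row carries more than $(n-2)!$ of them. Permuting rows, one finds $\omega'\subset\omega$ with $|\omega'|=(n-2)!+1$ whose columns all have $x_n$ in the last slot; the inductive hypothesis applied to the projections to $\RR^{n-1}$ lets one choose a specialization $x_1,\dots,x_{n-1}$ so that those columns already span an $(n-1)$-dimensional space, which must be exactly the hyperplane $H$ of Example~\ref{ex:bound-is-attained-general}. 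Finally, since only $(n-1)!$ permutations fix $n$ and $|T|=(n-1)!+1$, some column of $M_\omega$ has last coordinate $\neq x_n$ and so (one checks) lies off $H$, giving a nonvanishing minor. The point is that the failure of the single-coordinate criterion is not an obstacle but the source of the inductive subproblem; if you want to complete your write-up along any of your listed routes, you will need an argument of comparable force replacing this step.
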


\begin{proposition}
\label{prop:conj-odd-->even}
Let $k\leq n$ be positive integers.  If $\max_H |H\cap S_nv|\leq n!/k$, then for all $m\geq n$, we have $\max_H |H\cap S_mv|\leq m!/k$.

In particular, if Conjecture \ref{conj:max-bound} holds for an odd number $n$, then it is also holds for $n+1$.
\end{proposition}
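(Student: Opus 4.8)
The plan is to reduce the statement to a single inductive step together with a lemma on affine hyperplanes. Write $f(m):=\max_{v,H}|H\cap S_mv|$ for the quantity of Conjecture~\ref{conj:max-bound} in dimension $m$, the maximum being over $v\in\RR^m$ with distinct coordinates and $\sum_iv_i\ne0$ and over hyperplanes $H\subset\RR^m$ through the origin. If $k=1$ there is nothing to prove, since $|H\cap S_mv|\le m!$ always, so one may assume $k\ge2$, whence $n\ge2$. It then suffices to prove the implication ``$f(m)\le m!/k$ implies $f(m+1)\le(m+1)!/k$'' for every $m\ge n$: the general case follows by induction on $m$, and the ``in particular'' clause comes from taking $k=n$, since Conjecture~\ref{conj:max-bound} for odd $n$ gives $f(n)=(n-1)!=n!/n$, hence $f(n+1)\le(n+1)!/n=(n+1)(n-1)!$; combined with the reverse inequality from Example~\ref{ex:bound-is-attained-even} (the case of the even number $n+1$) this yields $f(n+1)=(n+1)(n-1)!$, which is exactly the value predicted by Conjecture~\ref{conj:max-bound}.

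To carry out the inductive step, I would fix $v=(c_1,\dots,c_{m+1})$ with distinct coordinates and $T:=\sum_ic_i\ne0$, and a hyperplane $H=\{x:\langle a,x\rangle=0\}$ with normal $a=(b,s)$, $b\in\RR^m$, $s\in\RR$, $a\ne0$, and argue by cases. If $b$ is a scalar multiple of the all-ones vector $\mathbf 1$ (the case $b=0$ included), then for an orbit point with last coordinate $w$ the condition $\sigma v\in H$ reads as a single linear equation in $w$; this equation is not identically true (that would force $a=0$), so $S_{m+1}v\cap H$ is either empty or is the set of all $m!$ orbit points with a prescribed last coordinate, giving $|H\cap S_{m+1}v|\le m!\le(m+1)!/k$. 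Otherwise $b$ is not proportional to $\mathbf 1$ (so $b\ne0$), and I would partition $S_{m+1}v$ according to which value occupies the last coordinate, obtaining
\[
|H\cap S_{m+1}v|=\sum_{i=1}^{m+1}\bigl|H_i\cap S_m\hat c_i\bigr|,\qquad \hat c_i:=(c_j)_{j\ne i}\in\RR^m,\quad H_i:=\{w:\langle b,w\rangle=-sc_i\},
\]
where each $H_i$ is an affine hyperplane. It remains to bound each of the $m+1$ summands by $m!/k$.

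For an index $i$ with $\sum_{j\ne i}c_j\ne0$, the coordinate sum is constant on the orbit $S_m\hat c_i$, so subtracting a suitable multiple of $\mathbf 1$ from $b$ replaces $H_i$ by a linear hyperplane $\tilde H_i$ with $H_i\cap S_m\hat c_i=\tilde H_i\cap S_m\hat c_i$; because $b$ is not proportional to $\mathbf 1$ this linear hyperplane has nonzero normal, and because $\hat c_i$ has distinct coordinates with nonzero sum the pair $(\hat c_i,\tilde H_i)$ is admissible, so $|H_i\cap S_m\hat c_i|\le f(m)\le m!/k$. At most one index $i_0$ can have $\sum_{j\ne i_0}c_j=0$, since otherwise two coordinates of $v$ would coincide, and this last index is the heart of the matter: the orbit $S_m\hat c_{i_0}$ lies entirely in $\{\sum_jw_j=0\}$, so $\hat c_{i_0}$ is precisely the sort of vector excluded from the hypothesis, and a separate argument is needed. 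I expect this to be the main obstacle, and I would resolve it with the following lemma: \emph{if $u\in\RR^m$ has distinct coordinates with $\sum_ju_j=0$ and $A\subsetneq\RR^m$ is an affine hyperplane with $A\ne\{\sum_jw_j=0\}$, then $|A\cap S_mu|\le f(m)$.} The lemma is proved by translation — replace $u$ by $u+\mathbf 1$, which still has distinct coordinates but now has nonzero coordinate sum; since every point of $S_mu$ is orthogonal to $\mathbf 1$, the affine condition ``$\sigma u\in A$'' is equivalent to ``$\sigma(u+\mathbf 1)$ lies on a suitable linear hyperplane $H'$'', the constant of $A$ being absorbed into a multiple of $\mathbf 1$ and the hypothesis $A\ne\{\sum_jw_j=0\}$ being exactly what guarantees $H'$ has nonzero normal, so that $(u+\mathbf 1,H')$ is admissible. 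Applying this with $u=\hat c_{i_0}$ and $A=H_{i_0}$ (which differs from $\{\sum_jw_j=0\}$ because $b$ is not proportional to $\mathbf 1$) bounds the final summand by $m!/k$, and summing all $m+1$ bounds gives $|H\cap S_{m+1}v|\le(m+1)\cdot m!/k=(m+1)!/k$, completing the step.
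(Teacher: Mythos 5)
Your argument is correct and takes a genuinely different route from the paper's, though both begin from the same decomposition of the orbit by the value sitting in the last coordinate (your $m+1$ slices are the paper's cosets $(in)S_m$). The paper then argues by contradiction: pigeonhole selects one coset meeting $H$ in more than $(n-1)!/k$ points, the inductive hypothesis applied to the projection (after translating by $(1,\dots,1)$ when $v_1+\cdots+v_{n-1}=0$) forces the projected set to span, and from this the span of the coset's points in $H$ is identified with the explicit hyperplane $H'$ of Example~\ref{ex:bound-is-attained-general}; since $|S_nv\cap H'|=(n-1)!\le n!/k$, this contradicts the assumed excess. You instead bound each slice directly by $m!/k$ and sum, avoiding Example~\ref{ex:bound-is-attained-general} and the spanning/identification-of-$H$ step entirely, at the cost of a separate case for $b$ proportional to $(1,\dots,1)$ (where each slice is all-or-nothing and at most one can be ``all'') and a translation lemma for the single index $i_0$ with $\sum_{j\ne i_0}c_j=0$. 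Both proofs translate by $(1,\dots,1)$ to escape the zero-sum degeneracy — you apply it to the deleted vector $\hat c_{i_0}$, the paper to $\pi(v)$. The paper's route yields the extra structural fact that a violating hyperplane would have to coincide with $H'$; yours is a cleaner, uniform summation of per-slice bounds and makes no appeal to the specific geometry of $H'$.

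One small wrinkle in the lemma's proof: the hypothesis $A\ne\{\sum_jw_j=0\}$ is not quite the condition that makes the translated linear hyperplane have nonzero normal. If the normal $c$ of $A$ is itself proportional to $(1,\dots,1)$, say $A=\{\sum_jw_j=e\}$ with $e\ne0$, then $A\ne\{\sum_jw_j=0\}$ and yet the translated normal $c-\tfrac{d+\langle c,(1,\dots,1)\rangle}{m}(1,\dots,1)$ vanishes. That case is harmless for the lemma's conclusion, since then $A\cap S_mu=\varnothing$ because every point of $S_mu$ has coordinate sum zero, but the proof should record it. In your application $A=H_{i_0}$ has normal $b$, which is not proportional to $(1,\dots,1)$ because you are in Case~2, so the argument as written is unaffected.
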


\section*{Acknowledgments}
It is our pleasure to thank Jason Bell, Ilya Bogdanov, Dan Edidin, Matt Kennedy, Heydar Radjavi, and Jerry Wang for helpful conversations. This paper is the outcome of an NSERC-USRA project; we thank NSERC for their support through the USRA program.

\section{Proof of Propositions \ref{prop:conj-for-generic-v} and \ref{prop:conj-odd-->even}}
\label{sec:props-from-intro}

We begin this section by analyzing the behaviour of $\max_H |H\cap S_nv|$ where $v$ is a generic vector:

\begin{proof}[{Proof of Proposition \ref{prop:conj-for-generic-v}}]
By Example \ref{ex:bound-is-attained-general}, we know that for every $v$ with distinct coordinates, there exists a hyperplane $H$ with $|H\cap S_nv|\geq(n-1)!$. So, it remains to show that for generic $v$ we have $|H\cap S_nv|\leq(n-1)!$ for every hyperplane $H$. Let $v=(x_1, \dots, x_n)$ where $x_1,\dots x_n$ are indeterminates, and let $\Omega$ be the set of all subsets of $S_nv$ consisting of $(n-1)! + 1$ elements. For every $\omega\in\Omega$, let $M_\omega$ be the matrix whose columns are the vectors in the set $\omega$ (with some ordering of the set $\omega$ whose choice will not affect the proof). Then we must show that for every $\omega\in\Omega$, the $n \times n$ minors of $M_\omega$ do not simultaneously vanish. Let $V_\omega\subseteq \AA^n$ be the variety defined by the simultaneous vanishing of the $n \times n$ minors of $M_\omega$. We need to show $\bigcup_{\omega\in\Omega} V_\omega\neq\AA^n$, so it is enough to show that for each $\omega\in\Omega$, there exists some $v\in\AA^n$ with $v\notin V_\omega$. 

We prove this by induction. When $n=2$, we must have $\omega=\{(x_1,x_2),(x_2,x_1)\}$, so $v=(1,0)$ will suffice.

Now suppose $n>2$. Consider the appearance of $x_n$ in the rows of $M_\omega$. If $x_n$ shows up at least once in each row, let $v=(0,\dots,0,1)$; then the column vectors in $M_\omega$ will contain the standard basis vectors, so the $n\times n$ minors will not vanish. If $x_n$ does not appear in some row, then it only occurs in at most $n-1$ of the rows; hence, some row contains at least $\frac{(n-1)!+1}{n-1}>(n-2)!$ copies of $x_n$. By permuting rows of $M_\omega$, we may assume there is a subset of $\omega'\subset\omega$ such that $|\omega'|=(n-2)!+1$ and every vector in $\omega'$ has $x_n$ as its last entry.

By induction, we can specialize the variables $x_1,\dots,x_{n-1}$ to be distinct real numbers in such a way that the column vectors in $\omega'$ span a space of dimension at least $n-1$. Choose $x_n$ so that $\sum_i x_i\neq 0$ and $x_n\neq x_i$ for $i=1,\dots,n-1$. Since the column vectors of $M_{\omega'}$ have the same last coordinate, they are all contained in the hyperplane $H$ constructed in Example \ref{ex:bound-is-attained-general}. We have therefore shown that if the $n\times n$ minors of $M_\omega$ vanish, then the span of the column vectors of $M_\omega$ is $H$. However, since $|\omega|>(n-1)!$ and the $x_i$ are distinct real numbers, some column vector of $M_\omega$ must have last coordinate not equal to $x_n$; this vector is not in $H$ and therefore the $n\times n$ minors of $M_\omega$ do not simultaneously vanish.
\end{proof}

We turn next to Proposition \ref{prop:conj-odd-->even}.

\begin{proof}[{Proof of Proposition \ref{prop:conj-odd-->even}}]
We prove the result by induction on $n$. We assume there exists $k\leq n-1$ such that for all $w\in\RR^{n-1}$ with distinct coordinates not summing to $0$, and all hyperplanes $H'\subset\RR^{n-1}$, we have $|S_{n-1}w\cap H'|\leq (n-1)!/k$. Now, let $v=(v_1,\dots,v_n)\in\RR^n$ with distinct coordinates not summing to $0$. Suppose there exists $T\subseteq S_nv$ and a hyperplane $H\subset\RR^n$ such that
\[
|T|=|T\cap H|>\frac{n!}{k}.
\]
Since $S_n$ is the disjoint union of the cosets $(in)S_{n-1}$ for $1\leq i\leq n$, there exists $i$ with $|T\cap (in)S_{n-1}v\cap H|>(n-1)!/k$. Relabeling the coordinates of $\RR^n$ if necessary, we can assume $i=n$. Let
\[
U=T\cap S_{n-1}v
\]
and $\pi\colon\RR^n\to\RR^{n-1}$ be the projection map $\pi(x_1,\dots,x_n)=(x_1,\dots,x_{n-1})$. Note that $\pi(v)$ has distinct coordinates and that $\pi(U)\subseteq S_{n-1}\pi(v)$. Moreover, $\pi|_U\colon U\to\pi(U)$ is a bijection, so $|\pi(U)|>(n-1)!/k$.

If $v_1+\ldots+v_{n-1}\neq 0$, then by induction, $\pi(U)$ is not contained in a hyperplane, and must therefore span $\RR^{n-1}$. As a result, $\Span(U)$ is either a hyperplane or $\RR^n$. Notice that $U$ is contained in the hyperplane $H'$ given by $cx_n=v_n(x_1+\ldots+x_{n-1})$ with $c=v_1+\ldots +v_{n-1}$, i.e.~the hyperplane constructed in Example \ref{ex:bound-is-attained-general}. Thus, $\Span(U)=H'$ and hence $H'=H\supset T$. However, $|S_nv\cap H'|=(n-1)!$ which implies $(n-1)!\geq |T|>n!/k$, a contradiction.

If $v_1+\ldots+v_{n-1}=0$, then $\pi(U)$ is contained in the hyperplane $x_1+\ldots+x_{n-1}=0$. Let $w=\pi(v)+(1,\dots,1)\in\RR^{n-1}$. Then the coordinates of $w$ are distinct and do not sum to $0$, so by induction, $\pi(U+(1,\dots,1))$ spans $\RR^{n-1}$. In particular, $\pi(U)$ spans the hyperplane $x_1+\ldots+x_{n-1}=0$. This implies that $U$ is not contained in any affine space of dimension less than $n-2$. Notice that $U$ is contained in the affine space $A$ given by $x_1+\ldots+x_{n-1}=x_n-v_n=0$, and that $A$ has dimension exactly $n-2$. Note further that $A$ is not a linear space since $v_n\neq0$, and so $U$ is not contained in any linear space of dimension $n-2$. Thus, $U$ must span an $(n-1)$-dimensional space and since $U$ is contained in the hyperplane $H'$ given by $x_1+\ldots+x_{n-1}=0$, we must have $\Span(U)=H'$, and so $H=H'$. However, we see $(in)S_{n-1}v\cap H'=\varnothing$ for all $i\neq n$. Thus, we again find $(n-1)!\geq |H'\cap T|=|T|>n!/k$, a contradiction.
\end{proof}

As a result of Proposition \ref{prop:conj-odd-->even}, we only need to consider the case when $n=p$ for the proof of Theorem \ref{thm:conj-for-primes}.

\section{An analysis via algebraic geometry}
\label{sec:AG-input}

Let $v=(c_1,\dots,c_n)\in\RR^n$ with distinct coordinates. Consider the elementary symmetric functions $e_k(x_1,\ldots,x_n)$ for $1\leq k\leq n$, and the following system of equations:
\begin{align*}
e_1(x_1,\ldots,x_n) &= e_1(c_1,\ldots,c_n) \\
& \vdotswithin{=} \\
e_t(x_1,\ldots,x_n) &= e_t(c_1,\ldots,c_n) 
\end{align*}
The set $S_nv$ is precisely the solution set of this system when $t=n$.  

\begin{definition}
\label{def:elem-sym-curve}
Let $C\subset\PP^n_\CC$ be the algebraic variety cut out by the above system of equations where we take $t=n-1$. We refer to $C$ as the \emph{elementary symmetric curve associated to} $v$.
\end{definition}


The elementary symmetric curve plays a fundamental role in this paper. Throughout the rest of this section, we let $C$ be the elementary symmetric curve associated to $v$ and let
\[
C=C_1\cup\dots\cup C_r
\]
be the decomposition of $C$ into its irreducible components.


If there is a hyperplane $H$ which contains many conjugates of $v$, then it will have a large intersection with $C$.  If $H$ intersects $C$ properly (that is, in a finite set of points), then $H$ cannot intersect $C$ in more than $(n-1)!$ points, and therefore $H$ cannot contain more than $(n-1)!$ conjugates of $v$.  

Write $C=C_1\cup\ldots\cup C_r$ as a union of irreducible curves.  (Note that $C$ cannot have any components of dimension greater than one, because its intersection with the hypersurface $e_n(x_1,\ldots,x_n)=e_n(c_1,\ldots,c_n)$ is a finite set of points, namely $S_nv$.)  If $H$ contains more than $(n-1)!$ conjugates of $v$, then it must contain some irreducible component $C_i$ of $C$.

\begin{lemma}
\label{lem:transitive-action-components}
We have the following properties:
\begin{enumerate}
\item\label{lem:transitive-action-components::dim1} Each $C_i$ has dimension 1.
\item\label{lem:transitive-action-components::hyp-n-1} If $H\subset\RR^n$ is a hyperplane that intersects $C$ properly, i.e.~in a finite set of points, then $|H\cap C|\leq (n-1)!$.
\item\label{lem:transitive-action-components::hyp-contain-Ci} If $H\subset\RR^n$ is a hyperplane satisfying $|H\cap C|>(n-1)!$, then $H$ contains some $C_i$.
\end{enumerate}
\end{lemma}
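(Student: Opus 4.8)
The three parts are each elementary consequences of intersection-theoretic facts about the elementary symmetric curve $C$ together with Bézout-type counting, so the plan is to dispatch them in order with minimal machinery.

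For part (1), the key observation — already noted in the discussion preceding the lemma — is that $C\cap\{e_n(x)=e_n(c)\}=S_nv$ is a finite nonempty set. If some irreducible component $C_i$ had dimension $\geq 2$, then its intersection with the single hypersurface $e_n(x)=e_n(c)$ would have dimension $\geq 1$, contradicting finiteness; and $C_i$ cannot have dimension $0$, since $C$ is cut out by only $n-1$ equations in $\PP^n$ (so every component has dimension $\geq 1$ by Krull's height theorem / the projective dimension bound). Hence every $C_i$ has dimension exactly $1$. I should be a little careful about working in $\PP^n_\CC$ versus $\AA^n$: the equations $e_k(x)=e_k(c)$ are inhomogeneous, so I would either homogenize and track the behavior at infinity, or simply run the dimension count affine-locally, where $S_nv\subset\AA^n$ is visibly finite.

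For part (2), suppose $H$ meets $C$ properly. Then $H\cap C$ is a finite set, and I want to bound its cardinality by $(n-1)!$. The clean way is a degree computation: $C$ has degree $(n-1)!$ in $\PP^n$ — this is Bézout applied to the $n-1$ hypersurfaces $e_k(x)=e_k(c)$, which (after homogenizing) have degrees $1,2,\dots,n-1$, giving $\prod_{k=1}^{n-1}k = (n-1)!$, provided the intersection is proper so that Bézout is an equality and no spurious components at infinity inflate the count. A hyperplane meeting a curve of degree $d$ properly meets it in at most $d$ points (counted without multiplicity, at most $d$; with multiplicity, exactly $d$ including points at infinity). So $|H\cap C|\leq (n-1)!$. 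The alternative, more self-contained route avoiding degree-of-$C$ subtleties: $H$ is cut out by one linear equation $\ell(x)=0$, so $H\cap C$ is cut out in $\PP^n$ by hypersurfaces of degrees $1,1,2,\dots,n-1$; if this is a finite set, Bézout bounds it by $(n-1)!$. I would use this second formulation since it sidesteps having to argue that $C$ itself has no excess components at infinity — I only need the final intersection to be finite, which is the hypothesis.

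For part (3), this is the contrapositive packaging: if $H$ contains no $C_i$, then $H\cap C_i$ is a proper intersection for every $i$ (a hyperplane either contains an irreducible curve or meets it in finitely many points), hence $H\cap C=\bigcup_i (H\cap C_i)$ is finite, so $H$ meets $C$ properly and part (2) gives $|H\cap C|\leq (n-1)!$, contradicting $|H\cap C|>(n-1)!$. Therefore $H$ must contain some $C_i$. The main obstacle across the whole lemma is purely bookkeeping: making the Bézout bound rigorous in $\PP^n$ requires either controlling the locus at infinity of the partially-homogenized system or phrasing everything so that only the finiteness hypothesis of part (2)/(3) is used; I would adopt the latter phrasing throughout. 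Everything else is routine.
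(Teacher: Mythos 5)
Your proof takes essentially the same approach as the paper: a dimension count for part (1) using both the finiteness of $C\cap\{e_n=e_n(c)\}$ and the lower bound from $n-1$ defining equations, Bézout for part (2), and the contrapositive packaging for part (3). Your preferred variant of part (2) --- applying Bézout directly to the system of $n$ hypersurfaces of degrees $1,1,2,\ldots,n-1$ rather than first asserting $\deg C=(n-1)!$ --- is a minor but sensible refinement that sidesteps worries about the projective closure of $C$ acquiring excess components at infinity, a subtlety the paper's one-line proof elides.
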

\begin{proof}
Notice that the intersection of $C$ with the hypersurface $e_n(x_1,\ldots,x_n)=e_n(c_1,\ldots,c_n)$ is a finite set of points, namely $S_nv$. Since intersecting with a hypersurface decreases dimension by at most 1, we see each $C_i$ has dimension at most 1. On the other hand, $C$ is defined as the intersection of $n-1$ hypersurfaces, so each $C_i$ has dimension at least 1. This proves (\ref{lem:transitive-action-components::dim1}).

Statements (\ref{lem:transitive-action-components::hyp-n-1}) and (\ref{lem:transitive-action-components::hyp-contain-Ci}) follows immediately from B\'ezout's Theorem since $C$ is intersection of hypersurfaces of degrees $1,2,\dots,n-1$, and hence has degree $(n-1)!$.
\end{proof}

\begin{lemma}
\label{lem:irred-comp-same-degree}
The $S_n$-action on $\RR^n$ induces a transitive action on the set of irreducible components $\{C_1,\dots,C_r\}$. Moreover, for each $i$, we have $\deg(C_i)=\frac{(n-1)!}{r}$ and $|C_i\cap S_nv|=\frac{n!}{r}$.
\end{lemma}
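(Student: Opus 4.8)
The plan is to exploit the fact that the defining equations of $C$ are $S_n$-invariant, so the permutation action on $\RR^n$ (extended to $\PP^n_\CC$) carries $C$ to itself, hence permutes the set $\{C_1,\dots,C_r\}$ of irreducible components. First I would record that for any $\sigma\in S_n$, the automorphism of $\PP^n_\CC$ induced by $\sigma$ fixes each hypersurface $e_k(x)=e_k(c)$ for $1\leq k\leq n-1$, because $e_k$ is a symmetric polynomial; therefore $\sigma(C)=C$ and $\sigma$ sends each irreducible component to an irreducible component of the same degree. This gives a well-defined $S_n$-action on the finite set $\{C_1,\dots,C_r\}$.

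Next I would prove transitivity. The key input is that $C$ meets the hypersurface $e_n(x)=e_n(c)$ exactly in $S_nv$, and by B\'ezout each $C_i$ meets it in $\deg(C_i)$ points (with multiplicity, but these are reduced since the union over all $i$ is the $n!$ distinct points of $S_nv$ — one should note the $c_i$ are distinct so the orbit has full size). Thus $S_nv$ is partitioned as $S_nv=\bigsqcup_i (C_i\cap S_nv)$, and each block is nonempty because $\deg(C_i)\geq 1$. Since $S_n$ acts transitively on $S_nv$ and this action is compatible with the action on components (i.e.\ $\sigma(C_i\cap S_nv)=\sigma(C_i)\cap S_nv$), any two components $C_i$, $C_j$ can be linked: pick $w\in C_i\cap S_nv$ and $w'\in C_j\cap S_nv$, choose $\sigma$ with $\sigma(w)=w'$, and then $\sigma(C_i)$ is the unique component containing $w'$, namely $C_j$. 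Hence the action on components is transitive.

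From transitivity the numerical claims follow by a counting/orbit argument. Transitivity forces all $\deg(C_i)$ to be equal; since $C$ is a transverse-enough intersection of hypersurfaces of degrees $1,2,\dots,n-1$ it has total degree $(n-1)!$, and additivity of degree over components gives $\sum_i \deg(C_i) = (n-1)!$, so $\deg(C_i) = (n-1)!/r$. Similarly, transitivity of the $S_n$-action on components together with the compatible action on $S_nv$ forces all the blocks $C_i\cap S_nv$ to have the same size; since they partition the $n!$-element set $S_nv$, each has size $n!/r$. (As a consistency check, $|C_i\cap S_nv|=\deg(C_i)$ by B\'ezout applied to $C_i$ and the hypersurface $e_n(x)=e_n(c)$, and indeed $n!/r = n\cdot (n-1)!/r$, which is not equal to $(n-1)!/r$ — so one must be careful: the correct statement is that $C_i$ meets \emph{that particular} hypersurface in $\deg(C_i)=(n-1)!/r$ points, but $|C_i\cap S_nv|$ counts points of the orbit on $C_i$, and these agree only if every intersection point of $C_i$ with the hypersurface lies in the orbit; this does hold since $C\cap\{e_n(x)=e_n(c)\}=S_nv$ set-theoretically. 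So in fact $(n-1)!/r = n!/r$ would be forced, which is absurd unless I have miscounted the degree of the hypersurface or the orbit size.) The hard part will be getting this last bookkeeping exactly right: one must decide whether the relevant $S_n$ here is the full symmetric group or a subgroup, equivalently whether the generic fibre of $C\to \AA^{n-1}$ over the symmetric-function values has $n!$ or $(n-1)!$ points, and track multiplicities in B\'ezout carefully. Once the correct normalization is fixed, the equalities $\deg(C_i)=(n-1)!/r$ and $|C_i\cap S_nv|=n!/r$ drop out of the orbit-counting.
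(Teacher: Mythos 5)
Your approach is essentially the paper's: restrict to the degree-$n$ hypersurface $Y\colon e_n(x)=e_n(c)$, use B\'ezout on each component, and bootstrap transitivity from the fact that every component meets $S_nv$. But you have not made an error in the \emph{structure} of the argument; you have made a concrete arithmetic slip that is causing the ``absurd'' conclusion you flag at the end.

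The slip is that $Y$ has degree $n$, not degree $1$. By B\'ezout, $|C_i\cap Y|\leq \deg(Y)\deg(C_i)=n\deg(C_i)$, not $\deg(C_i)$. Summing over $i$, the right side is $n\sum_i\deg(C_i)=n\cdot(n-1)!=n!$, which equals the left side $\sum_i|C_i\cap Y|=|C\cap Y|=|S_nv|=n!$. So equality holds term by term: $|C_i\cap S_nv|=n\deg(C_i)$ for every $i$. In particular every $C_i$ meets $S_nv$, which gives transitivity exactly as you describe, and once all degrees are equal you get $\deg(C_i)=(n-1)!/r$ and $|C_i\cap S_nv|=n\cdot(n-1)!/r=n!/r$. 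There is no tension at all; your ``$|C_i\cap S_nv|=\deg(C_i)$'' was off by the factor $\deg(Y)=n$, and there is no need to fuss about ``$n!$ versus $(n-1)!$ points in the generic fibre'' — $|S_nv|=n!$ since $v$ has distinct coordinates, full stop.

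One small point worth making explicit, which the paper does: since $|Y\cap C|$ achieves the B\'ezout bound $\deg(Y)\deg(C)=n!$ with $n!$ \emph{distinct} points, the intersection is transverse, so none of the points of $S_nv$ can lie on two components at once. That is what licenses your phrase ``the unique component containing $w'$'' in the transitivity argument. With the degree of $Y$ corrected and this transversality remark added, your proof is the paper's proof.
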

\begin{proof}
Since $Y\cap C=S_nv$ consists of $n!=\deg(Y)\deg(C)$ distinct points, $Y$ intersects $C$ properly and transversely. In particular, $Y$ cannot intersect $C$ at any point of intersection of two irreducible components of $C$. So, we find
\[
\deg(Y)\deg(C)=|Y\cap C|=\sum_i |Y\cap C_i|\leq\sum_i\deg(Y)\deg(C_i)=\deg(Y)\deg(C)
\]
from which we see
\[
|C_i\cap S_nv|=|Y\cap C_i|=\deg(Y)\deg(C_i)
\]
for every $i$. It follows that $C_i\cap S_nv\neq\varnothing$ and so the $S_n$-action on $\{C_1,\dots,C_r\}$ is transitive. As a result, each $C_i$ has the same degree and contains the same number of elements of $S_nv$, so we must have $\deg(C_i)=\frac{(n-1)!}{r}$ and $|C_i\cap S_nv|=\frac{n!}{r}$.
%
\end{proof}

\begin{lemma}
\label{lem:stab-contain-p-cycle}
Let $n=p$ be prime and $\stab(C_i)$ be the stabilizer of $C_i$ under the $S_p$-action on the set of irreducible components of $C$. Then $\stab(C_i)$ contains a $p$-cycle.
\end{lemma}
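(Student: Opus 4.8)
The plan is to reduce everything to a single divisibility observation combined with Cauchy's theorem. First I would invoke Lemma~\ref{lem:irred-comp-same-degree}: the $S_p$-action on the set of irreducible components $\{C_1,\dots,C_r\}$ is transitive, so by the orbit--stabilizer theorem $|\stab(C_i)| = p!/r$. The crucial point is that $\deg(C_i) = (p-1)!/r$ is a \emph{positive integer} (again by Lemma~\ref{lem:irred-comp-same-degree}), so $r$ divides $(p-1)!$; consequently $p!/r = p\cdot\bigl((p-1)!/r\bigr)$ is divisible by $p$, i.e.\ $p$ divides $|\stab(C_i)|$.

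Next I would apply Cauchy's theorem to the finite group $\stab(C_i)$ to produce an element $g\in\stab(C_i)$ of order exactly $p$. Finally, I would observe that any order-$p$ element of $S_p$ must be a $p$-cycle: writing $g$ as a product of disjoint cycles, the order of $g$ is the least common multiple of the cycle lengths, and since $p$ is prime each cycle length is either $1$ or $p$; because the lengths sum to $p$ and at least one of them equals $p$, the permutation $g$ is a single $p$-cycle with no fixed points. This $g$ lies in $\stab(C_i)$, which is exactly the assertion of the lemma.

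There is no real obstacle in this argument; the only thing to be careful about is the bookkeeping that forces $p \mid |\stab(C_i)|$, which rests entirely on the integrality of $\deg(C_i)$ coming from B\'ezout's theorem in the previous lemma. It is worth noting that primality of $n$ is used twice — once to conclude $p \mid |S_n|$ only to first order (so that the factor of $p$ survives division by $r \mid (n-1)!$), and once to pass from ``order $p$'' to ``$p$-cycle'' — which is why the composite case treated later in the paper requires substantially more work.
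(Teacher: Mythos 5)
Your proposal is correct and follows exactly the same route as the paper: Lemma~\ref{lem:irred-comp-same-degree} gives transitivity and $|\stab(C_i)| = p!/r = p\deg(C_i)$, the integrality of $\deg(C_i)$ forces $p \mid |\stab(C_i)|$, and Cauchy's theorem combined with the observation that every order-$p$ element of $S_p$ is a $p$-cycle finishes the argument. The only difference is cosmetic — you unpack the divisibility and cycle-type steps in more detail than the paper does.
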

\begin{proof}
By Lemma \ref{lem:irred-comp-same-degree}, 
$S_p$ acts transitively on the set of irreducible components of $C$. By Lemma \ref{lem:irred-comp-same-degree}, $|\stab(C_i)|=\frac{p!}{r}=p\deg(C_i)$, so $p$ divides $|\stab(C_i)|$. It follows from Cauchy's Theorem that $\stab(C_i)$ contains an element $\pi$ whose order is $p$; since $\pi\in S_p$, it is necessarily a $p$-cycle.
\end{proof}

\begin{corollary}
\label{cor:inv-subspace}
Let $n=p$ be prime and $w=(\zeta,\dots,\zeta^{p})$ where $\zeta=e^{2\pi i/p}$. Then the complex linear span of any irreducible component $C_0$ of $C$ contains $\sigma w$ for some $\sigma\in S_p$.
\end{corollary}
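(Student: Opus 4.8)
The plan is to exploit the $p$-cycle produced by Lemma~\ref{lem:stab-contain-p-cycle}. Fix an irreducible component $C_0$ and let $\pi\in\stab(C_0)$ be a $p$-cycle; then $\pi$ carries $C_0$ to itself, so the complex linear span $L$ of $C_0$ is a $\pi$-invariant subspace of $\CC^n$ (recall $n=p$). The crux is to understand such subspaces. Since $\pi$ permutes the $n$ coordinates in a single cycle, $\CC^n$ is the regular representation of the cyclic group $\langle\pi\rangle\cong\ZZ/p$; equivalently, after reordering coordinates $\pi$ becomes the cyclic shift matrix, which the discrete Fourier transform diagonalizes. Thus $\CC^n=\bigoplus_{k=0}^{p-1}L_k$, where $L_k$ is the one-dimensional eigenspace on which $\pi$ acts by $\zeta^k$, and a generator $u_k$ of $L_k$ may be taken to have $j$-th coordinate $\zeta^{kj}$ in the cyclically ordered coordinates. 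As the eigenvalues $\zeta^0,\dots,\zeta^{p-1}$ are pairwise distinct, every $\pi$-invariant subspace is the direct sum of the lines $L_k$ it contains; in particular $L=\bigoplus_{k\in T}L_k$ for some $T\subseteq\{0,1,\dots,p-1\}$.

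I would next use primality of $p$ to identify the relevant eigenvectors. For $1\le k\le p-1$ the map $j\mapsto kj$ is a bijection of $\ZZ/p$, so the coordinates of $u_k$ are exactly the $p$-th roots of unity, each occurring once; since $w=(\zeta,\zeta^2,\dots,\zeta^p)$ has the same multiset of coordinates, $u_k$ is a coordinate permutation of $w$, i.e.\ $u_k=\sigma_k w$ for some $\sigma_k\in S_p$. (Undoing the coordinate reordering above only replaces $\sigma_k$ by another permutation.) By contrast $u_0=(1,1,\dots,1)$ is visibly not an $S_p$-translate of $w$. It therefore remains to show that $T$ contains some index $k\neq0$.

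For this, $L\neq0$ because $C_0\neq\varnothing$, so $T\neq\varnothing$; and $L\neq L_0$, i.e.\ $C_0$ is not contained in the line $\CC\cdot(1,\dots,1)=L_0$, because $C_0\subseteq C$ lies in the affine hyperplane $\{e_1(x_1,\dots,x_n)=e_1(c_1,\dots,c_n)\}$, this hyperplane meets $L_0$ in a single point, and $\dim C_0=1$ by Lemma~\ref{lem:transitive-action-components}(\ref{lem:transitive-action-components::dim1}). Hence $T\neq\{0\}$, so some $k\in T$ satisfies $k\neq0$, and then $\sigma_k w$ spans $L_k\subseteq L$, which is the assertion.

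The write-up is short; the one delicate point — and what I view as the heart of the matter — is the passage from ``$L$ is $\pi$-invariant'' to ``$L$ contains a genuine eigenvector that is an $S_p$-translate of $w$'', as opposed to merely the constant eigenvector $u_0$. This is precisely where primality of $p$ is indispensable: for composite $p$ a single $p$-cycle has eigenvectors, such as $(1,-1,1,-1)$ when $p=4$, that are not permutations of $w$. Everything else is routine, modulo the harmless bookkeeping of reordering coordinates so that $\pi$ is the standard cyclic shift.
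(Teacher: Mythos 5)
Your proof is correct and follows essentially the same route as the paper: fix a $p$-cycle $\pi\in\stab(C_0)$ via Lemma~\ref{lem:stab-contain-p-cycle}, view $\Span_\CC C_0$ as a $\langle\pi\rangle$-subrepresentation of $\CC^p$, and observe that any nontrivial eigenline of a single $p$-cycle is spanned by a coordinate permutation of $w$ (this is where primality enters). The one cosmetic difference is how you rule out $\Span_\CC C_0\subseteq\CC(1,\dots,1)$: the paper simply notes $C_0$ meets $S_pv$, whose points have distinct coordinates, whereas you argue via $\dim C_0=1$ together with the fact that the hyperplane $e_1=e_1(c)$ meets $\CC(1,\dots,1)$ in a single point; both justifications are valid.
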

\begin{proof}
Since $\stab(C_0)$ contains a $p$-cycle $\pi$, the complex linear span of $C_0$ contains a subrepresentation of the permutation representation of $\<\pi\>$. This subrepresentation is non-trivial since $v$ has distinct coordinates. Thus, it contains a non-trivial complex irreducible $\<\pi\>$-representation, which is necessarily spanned by $\sigma w$ for some $\sigma\in S_p$.
\end{proof}

We conclude this section with a key lemma used in the proof of Theorem \ref{thm:conj-for-primes}. We know from Lemma \ref{lem:transitive-action-components} 
(\ref{lem:transitive-action-components::hyp-contain-Ci}) that if $H\subset\RR^n$ is a hyperplane with $|H\cap S_nv|>(n-1)!$, then $H$ must contain an irreducible component $C_i$. In the proof of Theorem \ref{thm:conj-for-primes}, we show that for each $C_i\subset H$, there are $n-1$ other irreducible components of $C$ that are not contained in $H$. We then apply the following:

\begin{lemma}
\label{lem:n-1-not-contained}
Let $H\subset\RR^n$ be a hyperplane. Suppose that for each irreducible component $C_i$ of $C$ satisfying $C_i\subset H$, there are irreducible components $C_{i1},\dots,C_{i,{n-1}}$ with the following properties:
\begin{enumerate}
\item[(i)] $H\cap C_{ij}\cap S_nv=\varnothing$ and
\item[(ii)] $C_{ik}=C_{j\ell}$ if and only if $i=j$ and $k=\ell$.
\end{enumerate}
Then $|H\cap S_nv|\leq (n-1)!$.
\end{lemma}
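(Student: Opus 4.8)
The plan is to count incidences between $H$ and $S_nv$ by distributing the elements of $S_nv$ among the irreducible components of $C$. The starting point is Lemma~\ref{lem:irred-comp-same-degree}: every component $C_i$ carries exactly $\frac{n!}{r}$ points of $S_nv$, and since $S_nv = Y \cap C$ meets $C$ transversely away from the pairwise intersections $C_i \cap C_j$, the sets $C_i \cap S_nv$ partition $S_nv$. Thus $|H \cap S_nv| = \sum_{C_i \subset H} |C_i \cap S_nv| = \frac{n!}{r}\cdot N$, where $N$ is the number of irreducible components contained in $H$. So it suffices to show $N \leq \frac{(n-1)!}{(n!/r)} = \frac{r}{n}$, i.e.\ that at most $\frac{r}{n}$ of the $r$ components lie inside $H$.

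The next step is to exploit hypotheses (i) and (ii) to produce, for every component $C_i \subset H$, a ``private'' list of $n-1$ components $C_{i1},\dots,C_{i,n-1}$ that are \emph{not} contained in $H$: indeed, (i) says $C_{ij} \cap S_nv \neq \varnothing$ forces $C_{ij}$ to contribute points of $S_nv$ outside $H$, so in particular $C_{ij} \not\subset H$ (a component contained in $H$ contributes all its $S_nv$-points to $H$). Condition (ii) says these lists are disjoint from one another as $C_i$ ranges over the components inside $H$, and each $C_{ij}$ is itself not inside $H$, so the lists are also disjoint from the set of components contained in $H$. Hence the $N$ components inside $H$, together with the $N(n-1)$ components appearing in the lists, are $N + N(n-1) = Nn$ distinct components of $C$. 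Therefore $Nn \leq r$, giving $N \leq \frac{r}{n}$, which is exactly what we needed.

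Putting the two steps together, $|H \cap S_nv| = \frac{n!}{r}\cdot N \leq \frac{n!}{r}\cdot \frac{r}{n} = (n-1)!$, as claimed. The only subtlety to be careful about in writing this up is the precise justification that ``$C_{ij}\cap S_nv = \varnothing$ together with $C_{ij}$ being a component of $C$'' implies $C_{ij} \not\subset H$; this is immediate since every component of $C$ meets $S_nv$ (Lemma~\ref{lem:irred-comp-same-degree}), so if $C_{ij} \subset H$ then $\varnothing \neq C_{ij}\cap S_nv \subset H \cap S_nv$, contradicting~(i). I do not expect any real obstacle here — the content of the lemma is entirely in the bookkeeping, and the genuinely hard work (constructing the lists $C_{ij}$ and verifying (i) and (ii)) is deferred to the proof of Theorem~\ref{thm:conj-for-primes}.
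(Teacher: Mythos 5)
Your combinatorial bookkeeping — that the $N(n-1)$ components in the lists are distinct from each other, distinct from the $N$ components inside $H$, and hence $Nn \le r$ — is correct and is indeed the key count the paper uses. The gap is in your very first reduction: the equality $|H\cap S_nv| = \sum_{C_i\subset H}|C_i\cap S_nv|$ is false. A component $C_k$ that is \emph{not} contained in $H$ can still meet $H$ in finitely many points, and some of those points may lie in $S_nv$; those incidences are dropped by your formula. The correct decomposition, using that the sets $C_i\cap S_nv$ partition $S_nv$, is
\[
|H\cap S_nv| \;=\; \sum_{C_i\subset H}|C_i\cap S_nv| \;+\; \sum_{C_k\not\subset H}|C_k\cap H\cap S_nv|,
\]
and the second sum need not vanish — indeed nothing in the hypotheses controls components other than the $C_i\subset H$ and the $C_{ij}$.

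The paper's proof patches exactly this: for each $C_k\not\subset H$, B\'ezout (together with $\deg(C_k)=(n-1)!/r$ from Lemma~\ref{lem:irred-comp-same-degree}) gives $|C_k\cap H\cap S_nv|\le (n-1)!/r$, hypothesis (i) forces that bound to be $0$ for the $m(n-1)$ components $C_{ij}$, and hypothesis (ii) plus your observation that each $C_{ij}\not\subset H$ show these $m(n-1)$ are distinct from one another and from the $m$ components in $H$. Hence at most $r-mn$ components can contribute to the second sum, yielding $|H\cap S_nv|\le m\cdot\frac{n!}{r} + (r-mn)\cdot\frac{(n-1)!}{r} = (n-1)!$. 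So your counting of components is exactly right; what is missing is the per-component B\'ezout bound on the stray intersections, without which the argument does not close.
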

\begin{proof}
Say $H$ contains exactly $m$ of the irreducible components of $C$.  By Lemma \ref{lem:irred-comp-same-degree} and B\'ezout's Theorem, we then have:
\begin{equation}
\begin{split}
 |H\cap S_nv| &\leq \sum_{C_i\subset H} \frac{n!}{r}\ +\!\!\sum_{\substack{C_k\not\subset H\\ C_k\cap H\cap S_nv\neq \varnothing}} \!\!\!\!\frac{(n-1)!}{r} \\
&\leq m\frac{n!}{r}+(r-mn)\frac{(n-1)!}r\\
&= (n-1)!\qedhere
\end{split}
\end{equation}
\end{proof}

\section{Lemmas concerning 2-cycles, 3-cycles, and 2-2-cycles}
\label{sec:comb-lemmas}

In this section, we collect several results concerning the structure of hyperplanes that simultaneously contain $v$ and $\sigma v$, where $\sigma$ is a 2-cycle, a 3-cycle, or a 2-2-cycle. We also prove Theorem \ref{thm:conj-for-primes} for $p=3,5$.

\begin{lemma}
\label{lem:transpositions-3cycles}
Let $n\geq 3$ and $v=(c_1,\dots,c_n)\in\RR^n$ with distinct coordinates. Let $H=(a_1,\dots,a_n)^\perp$ be a hyperplane containing $v$. If $\tau=(ij)$ is a transposition and $\tau v\in H$, then $a_i=a_j$.

Let $\sigma=(ijk)$ be a 3-cycle and $\pi$ an $n$-cycle. If $H$ contains $\pi^mv$ and $\sigma\pi^mv$ for all $m$, then $a_i=a_j=a_k$.
\end{lemma}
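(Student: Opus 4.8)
The plan is to exploit the hypothesis that $H$ contains the whole $\langle\pi\rangle$-orbit of both $v$ and $\sigma v$, which forces the normal vector $(a_1,\dots,a_n)$ to satisfy a system of linear equations indexed by the powers of $\pi$. First I would record the first part: if $\tau=(ij)$ and both $v,\tau v\in H=(a_1,\dots,a_n)^\perp$, then subtracting $\langle a,v\rangle=0$ from $\langle a,\tau v\rangle=0$ gives $(a_i-a_j)(c_i-c_j)=0$, and since $c_i\neq c_j$ we get $a_i=a_j$. This is the template for the rest.

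For the main assertion, after relabeling I may as well take $\pi=(1\,2\,\cdots\,n)$. Write $a=(a_1,\dots,a_n)$. Since $\pi^m v\in H$ for all $m$, the vector $a$ is orthogonal to every cyclic shift of $v$; equivalently $a$ lies in the orthogonal complement, inside the regular representation of $\ZZ/n$, of the cyclic subspace generated by $v$. Because $v$ has distinct coordinates, the discrete Fourier transform $\hat v$ of $v$ has the property that $\hat v(\chi)\neq 0$ for at least… — more usefully, I would argue directly: the condition ``$a\perp \pi^m v$ for all $m$'' says that the circular convolution of $a$ with (the reverse of) $v$ is identically zero, so on the Fourier side $\hat a(\chi)\,\overline{\hat v(\chi)}=0$ for every character $\chi$ of $\ZZ/n$. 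Hence $\hat a$ is supported on the set $Z=\{\chi:\hat v(\chi)=0\}$. The same argument applied to $\sigma v$ in place of $v$ shows $\hat a$ is also supported on $Z'=\{\chi:\widehat{\sigma v}(\chi)=0\}$, so $\hat a$ is supported on $Z\cap Z'$.

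The crux is then to show $Z\cap Z'$ is ``small'' enough to force $a_i=a_j=a_k$. Here is where I expect the real work to be, and where the hypothesis $\sigma=(ijk)$ a $3$-cycle enters: $\sigma v - v$ is supported on the three coordinates $i,j,k$ and its entries sum to zero, so $\widehat{\sigma v - v}$ is an explicit trigonometric expression vanishing only on a controlled set of characters; thus a character $\chi$ lies in $Z\cap Z'$ only if it also kills $\sigma v-v$, which pins down $\chi$ severely. I would then translate ``$\hat a$ supported on $Z\cap Z'$'' back to the coordinate side: it says $a$ is a real linear combination of the real/imaginary parts of the characters in $Z\cap Z'$, and I must check that every such $a$ is constant on $\{i,j,k\}$. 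The cleanest route is probably to avoid the full Fourier machinery and instead argue combinatorially with the transposition fact: I would like to produce transpositions $\tau$ such that $\tau$ and $\tau$ conjugated appropriately by powers of $\pi$ land me in the orbit hypotheses, yielding $a_i=a_j$, $a_j=a_k$ directly. Concretely, since $H\supset \pi^m v$ and $H\supset \sigma\pi^m v$, and $\sigma\pi^m v = (\pi^m \sigma' \pi^{-m})\pi^m v$ where $\sigma'$ is a conjugate $3$-cycle, one extracts, by composing two such elements, a transposition $\tau$ with $\tau(\pi^m v)\in H$ and $\pi^m v\in H$; the first part of the lemma then gives an equality among the $a$'s, and running over the three transpositions inside $\langle(ijk)\rangle\cdot(\text{stuff})$ gives all of $a_i=a_j=a_k$. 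I would organize the write-up around this second, elementary approach, keeping the Fourier remark only as motivation; the main obstacle is bookkeeping the conjugations so that the transpositions produced actually have the form needed to invoke the first part.
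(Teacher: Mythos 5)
Your proof of the first (transposition) claim is correct and is the same computation as in the paper: $\tau v - v$ is supported on coordinates $i,j$, has $i$-th and $j$-th entries $\pm(c_j-c_i)\neq 0$, and lies in $H$, forcing $a_i=a_j$.

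For the second (3-cycle) claim, however, both of the routes you sketch have a genuine gap. The Fourier sketch contains an error: from $a\perp \sigma\pi^m v$ for all $m$ you may conclude that $\widehat{\sigma^{-1}a}$ is supported on $Z=\{\chi:\hat v(\chi)=0\}$, \emph{not} that $\hat a$ is supported on $Z'=\{\chi:\widehat{\sigma v}(\chi)=0\}$. The hypothesis is about $\sigma\pi^m v$, not $\pi^m(\sigma v)$, and since $\sigma$ is not a cyclic shift the two spans are different; in particular $\sigma\cdot\mathrm{span}\{\pi^m v\}$ is not a $\langle\pi\rangle$-invariant subspace, so the whole ``support of $\hat a$'' framework does not apply to it directly. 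Your alternative, ``compose two such elements to extract a transposition $\tau$ with $\tau(\pi^m v)\in H$,'' does not go through either: the elements of $H\cap S_nv$ supplied by the hypothesis are $\pi^m v$ and $\sigma\pi^m v$, and the ratios $\sigma\pi^{m}\pi^{-m'}$, $\sigma\pi^m\pi^{-m'}\sigma^{-1}$, $\pi^{-m'}\sigma\pi^m$, etc.\ are conjugates of $n$-cycles or $3$-cycles, never transpositions for $n\geq 4$. So there is no way to feed the first part of the lemma the data it needs.

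The idea the paper actually uses, and which your sketch is missing, is to look at the difference vectors $d_m:=\sigma\pi^{-m}v-\pi^{-m}v$, all of which lie in $H$. After permuting coordinates so that $\sigma=(132)$, every $d_m$ is supported on the first three coordinates and has entries summing to zero, so they all lie in the fixed two-dimensional subspace $W=\{w:w_1+w_2+w_3=0,\ w_4=\dots=w_n=0\}$. The only thing left to show is that some two of the $d_m$ are linearly independent, so that $W\subset H$ and hence $e_1-e_2,\ e_1-e_3\in H$, which gives $a_1=a_2=a_3$. The paper does this with a sign argument: letting $c_r=\min_\ell c_\ell$ and picking the three exponents $m$ for which $\pi^m(1)=r$, $\pi^m(2)=r$, $\pi^m(3)=r$, the resulting three vectors in $W$ have sign patterns $(+,\ast,-)$, $(-,+,\ast)$, $(\ast,-,+)$ in their first three coordinates, and a short case check shows these cannot all be pairwise proportional. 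If you want to salvage your write-up, replacing the Fourier/transposition sketch with this differences-in-a-fixed-2D-space argument is the missing step.
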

\begin{proof}
By permuting coordinates, we can assume $\tau=(12)$. Then $\tau v-v=(c_2-c_1,c_1-c_2,0,\dots,0)$ is contained in $H$. Since $c_2\neq c_1$, we have $a_1=a_2$.

For the second claim of the lemma, we first permute coordinates to assume $\sigma=(123)^{-1}=(132)$. Then for all $i$, we have\footnote{Recall that if $\epsilon\in S_n$, then the $j$-th coordinate of $\epsilon(v)$ is $c_{\epsilon^{-1}(j)}$.}
\[
d_i:=\sigma\pi^{-i}v-\pi^{-i}v=(c_{\pi^i(2)}-c_{\pi^i(1)},c_{\pi^i(3)}-c_{\pi^i(2)},c_{\pi^i(1)}-c_{\pi^i(3)},0,\dots,0)\in H.
\]
Let $m$ be such that $c_m=\min_l c_l$. Choose $i,j,k$ so that $\pi^i(1)=m$, $\pi^j(2)=m$, and $\pi^k(3)=m$. We claim that $d_i$, $d_j$, and $d_k$ are linearly independent. Indeed, the first three entries of $d_i$, $d_j$, and $d_k$ have signs $(+,\ast_1,-),(-,+,\ast_2),(\ast_3,-,+)$ respectively, where $\ast_l$ is unknown. So, if $d_i$ is a multiple of $d_j$, then $\ast_1$ must be negative and $\ast_2$ must be positive. This then shows that $d_k$ is not a multiple of $d_j$.

Next, note that $d_i,d_j,d_k$ are contained in the two dimensional space $W=\{(w_1,\dots,w_n) : w_1+w_2+w_3=w_4=\dots=w_n=0\}$. So, $d_i,d_j,d_k$ span $W$ and so $W\subset H$. In particular, $(1,-1,0,\dots,0),(1,0,-1,\dots,0)\in H$ which implies $a_1=a_2=a_3$.
\end{proof}

As an application of Lemma \ref{lem:transpositions-3cycles}, we prove Theorem \ref{thm:conj-for-primes} for hyperplanes whose normal vector has a distinct entry.

\begin{corollary}
\label{cor:distinct-coord}
Let $p\geq3$ be a prime. Let $H=(a_1,\dots,a_p)^\perp$ be a hyperplane and assume there exists $i$ such that for all $j\neq i$ we have $a_j\neq a_i$. Then $|H\cap S_pv|\leq (p-1)!$ for all $v\in\RR^p$ with distinct coordinates.
\end{corollary}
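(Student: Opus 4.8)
The plan is to show that a hyperplane $H = (a_1,\dots,a_p)^\perp$ with a coordinate $a_i$ distinct from all others cannot contain any irreducible component $C_\ell$ of the elementary symmetric curve $C$ associated to $v$; by Lemma \ref{lem:transitive-action-components}(\ref{lem:transitive-action-components::hyp-contain-Ci}), this immediately gives $|H\cap S_pv|\leq (p-1)!$. After permuting coordinates we may assume $i = p$, so $a_p \neq a_j$ for all $j < p$. Suppose for contradiction that $C_\ell \subset H$ for some component $C_\ell$. By Lemma \ref{lem:stab-contain-p-cycle}, $\stab(C_\ell)$ contains a $p$-cycle $\pi$, and since $H$ is a linear subspace containing $C_\ell$ it contains the linear span of $C_\ell$, which is $\pi$-invariant; hence $H$ is stable under $\pi$. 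But $H = (a_1,\dots,a_p)^\perp$, so $\pi$-invariance of $H$ forces $(a_1,\dots,a_p)$ to be mapped to a scalar multiple of itself by $\pi$, and since $\pi$ has finite order the scalar is a root of unity; as $\pi$ permutes the coordinates, comparing entries shows the scalar must be $1$, i.e. $(a_1,\dots,a_p)$ is fixed by $\pi$. A $p$-cycle acting on coordinates has only the constant vectors as fixed points, so $a_1 = \dots = a_p$, contradicting $a_p \neq a_1$.

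Alternatively, and perhaps more in keeping with the combinatorial flavor of this section, I would argue directly with Lemma \ref{lem:transpositions-3cycles} using Corollary \ref{cor:inv-subspace}. If $C_0 \subset H$ with $\pi \in \stab(C_0)$ a $p$-cycle, then $H$ contains $\pi^m v$ for all $m$ (since $v \in C_0$ implies $\pi^m v \in C_0$), and in fact $H$ contains the full $\pi$-orbit of every point of $C_0$. Picking any transposition $\tau = (jk)$ and a suitable component in the $S_p$-orbit of $C_0$ — using transitivity from Lemma \ref{lem:irred-comp-same-degree} — one arranges that $H$ (after replacing $H$ by a conjugate, which merely permutes the $a_i$) contains both a point and its image under $\tau$, forcing $a_j = a_k$ by the first part of Lemma \ref{lem:transpositions-3cycles}. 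Running this over enough transpositions, the $\pi$-invariance of the span of $C_0$ lets one propagate the equalities $a_j = a_k$ around the $p$-cycle and conclude all $a_j$ are equal.

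I expect the first argument to be cleaner, so I would present that one: the only real content is the elementary fact that a $p$-cycle acting on $\RR^p$ by permuting coordinates has no eigenvector with eigenvalue a root of unity other than the constant vectors (equivalently, the rational canonical form / the factorization $x^p - 1 = (x-1)(x^{p-1}+\cdots+1)$ with the second factor irreducible over $\QQ$). The main obstacle, such as it is, is just bookkeeping: making sure that "$H$ contains $C_\ell$" really does give $\pi$-invariance of $H$ rather than just of the span of $C_\ell$ — but this is automatic since $\operatorname{span}(C_\ell)\subseteq H$ and $\dim H = p-1 = \dim\operatorname{span}(C_\ell)$ would only fail if $C_\ell$ itself spanned a smaller space, in which case $C_0$ is contained in an even smaller $\pi$-invariant subspace and the same eigenvalue argument applies verbatim. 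So no genuine difficulty arises; the corollary is essentially a direct consequence of Lemma \ref{lem:stab-contain-p-cycle} together with linear algebra of cyclic permutation matrices.
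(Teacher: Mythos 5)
Your first argument has a genuine gap at the step you flagged yourself and then waved away. If $C_\ell\subset H$, you only know $\operatorname{span}(C_\ell)\subseteq H$; you do \emph{not} know $\operatorname{span}(C_\ell)=H$. Your patch for the case $\operatorname{span}(C_\ell)\subsetneq H$ — ``the same eigenvalue argument applies verbatim'' — is false: when $\operatorname{span}(C_\ell)$ is a proper $\pi$-invariant subspace $W\subsetneq H$, the normal $(a_1,\dots,a_p)$ merely lies in $W^\perp$, which is a $\pi$-invariant subspace of dimension $\geq 2$, and a general vector in such a subspace is \emph{not} a $\pi$-eigenvector. So you cannot conclude $(a_1,\dots,a_p)$ is constant. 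This is not a removable technicality: in the dihedral case the components $C_\ell$ are conics, so $\dim\operatorname{span}_\CC(C_\ell)\leq 3 < p-1$ for $p\geq 5$, and $W^\perp$ has dimension $\geq p-3\geq 2$. Moreover $W^\perp$ cannot be contained in any single diagonal $\{a_j=a_k\}$ (by $\pi$-invariance and the fact that the $\pi$-orbit of a diagonal cuts out only the constant line), so a generic hyperplane $H\supset C_\ell$ has a normal with all coordinates distinct — i.e.\ your claimed conclusion ``$H$ contains no component of $C$'' is simply false in general.

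The paper does something weaker and correct: it does \emph{not} rule out $H\supset C_\ell$. Instead, for each component $C_\ell\subset H$, it produces the $p-1$ components $(1j)C_\ell$ (for $j\neq 1$, after reindexing so that $a_1$ is the distinguished coordinate), shows each misses $H\cap S_pv$ via the first part of Lemma~\ref{lem:transpositions-3cycles} (since $a_1\neq a_j$), and shows these families are pairwise disjoint using the second part of Lemma~\ref{lem:transpositions-3cycles} together with the $p$-cycle from Lemma~\ref{lem:stab-contain-p-cycle}. Then Lemma~\ref{lem:n-1-not-contained} does the B\'ezout bookkeeping and yields the bound $(p-1)!$ even when $m>0$ components lie in $H$. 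Your second, ``alternative'' sketch is closer in spirit to this but is too vague: replacing $H$ by a conjugate $\sigma H$ permutes the $a_i$, so the equalities you extract are of the form $a_{\sigma(j)}=a_{\sigma(k)}$, and you have given no mechanism for ``propagating'' these to conclude all $a_j$ are equal — nor should there be one, since that conclusion is not needed and, via the $m\geq 2$ examples that drive Section 6, is not available.
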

\begin{proof}
After permuting coordinates, we may assume $i=1$. We will prove the Corollary by applying Lemma \ref{lem:n-1-not-contained}. Let $C$ be the elementary symmetric curve associated to $v$ and let $C_1,\dots,C_r$ be its irreducible components. For each $C_i$ in $H$, let $C_{ij}:=(1j)C_i$ where $j\neq 1$. Since $\sigma(C_i)\cap S_pv=\sigma(C_i\cap S_pv)$ for all $\sigma\in S_p$, the first part of Lemma \ref{lem:transpositions-3cycles} shows $H\cap C_{ij}\cap S_pv=\varnothing$. If $C_i$ and $C_k$ are contained in $H$, and if $C_{ij}=C_{kl}$, then $(1l)(1j)C_i=C_k$. If $j\neq l$, then $(1jl)C_i=C_k$; this is not possible by the second claim in Lemma \ref{lem:transpositions-3cycles}, where we take $\pi\in\stab(C_i)$ to be the $p$-cycle constructed in Lemma \ref{lem:stab-contain-p-cycle}. If $j=l$, then $C_i=C_k$ and so $i=k$. The result follows by Lemma \ref{lem:n-1-not-contained}.
\end{proof}

The rest of this section is concerned with the case where $H$ contains $v$ and $\sigma v$ for some 2-2-cycle $\sigma$. We start with the following preliminary result and as an application, prove Theorem \ref{thm:conj-for-primes} for special classes of hyperplanes.

\begin{lemma}
\label{lem:2-2-cycles-special}
Let $p\geq5$ be prime, $v\in\RR^p$ have distinct coordinates, and $C$ be the elementary symmetric curve associated to $v$ with some irreducible component $C_0$. Suppose $(ij)(kl)$ is a 2-2-cycle and $H=(a_1,\dots,a_p)^\perp$ is a hyperplane containing $C_0$, and $(ij)(kl) C_0$. If $a_i=a_k=a$ and $a_j=a_l=b$, then $a=b$.
\end{lemma}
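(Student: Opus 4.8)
The plan is to exploit the $p$-cycle $\pi$ in $\stab(C_0)$ guaranteed by Lemma~\ref{lem:stab-contain-p-cycle}, together with the fact that $H$ then contains all the translates $\pi^m C_0$ and $\pi^m \sigma C_0$ where $\sigma=(ij)(kl)$. Since each $\pi^m C_0$ and each $\pi^m \sigma C_0$ contains a conjugate of $v$ (indeed meets $S_p v$ nontrivially by Lemma~\ref{lem:irred-comp-same-degree}), $H$ contains a large supply of vectors of the form $\tau v$. The key observation is that if $u, u' \in S_p v$ both lie in $H$ and differ in only a few coordinates, then $u - u' \in H$ gives a linear relation among the $a_m$. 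First I would write $\sigma = (ij)(kl)$ and, after permuting coordinates, normalize to $\sigma = (12)(34)$, with the hypothesis becoming $a_1 = a_3 = a$, $a_2 = a_4 = b$; I want to deduce $a = b$. Because $\pi^m C_0 \subset H$ and $\pi^m \sigma C_0 \subset H$ for all $m$, for suitable conjugates I get $\pi^m v \in H$ and $\sigma \pi^m v \in H$, hence the difference vectors
\[
d_m := \sigma\pi^m v - \pi^m v = (c_{\pi^m(2)} - c_{\pi^m(1)})\,(e_1 - e_2) + (c_{\pi^m(4)} - c_{\pi^m(3)})\,(e_3 - e_4) \in H
\]
for every $m$ (writing $e_i$ for standard basis vectors). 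Each such $d_m$ lies in the $2$-dimensional space $W = \Span(e_1 - e_2,\, e_3 - e_4)$, so it suffices to produce two values of $m$ for which the $d_m$ are linearly independent: then $W \subseteq H$, so $e_1 - e_2 \in H$ and $e_3 - e_4 \in H$, which forces $a_1 = a_2$, i.e.\ $a = b$ (the other relation $a_3 = a_4$ is the same).

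The heart of the argument is therefore the linear-independence step, which is essentially the same device used in the proof of Lemma~\ref{lem:transpositions-3cycles}. Writing $\alpha_m = c_{\pi^m(2)} - c_{\pi^m(1)}$ and $\beta_m = c_{\pi^m(4)} - c_{\pi^m(3)}$, the vectors $d_m = (\alpha_m, \beta_m)$ in $W$-coordinates are proportional for all $m$ precisely when the ratio $\alpha_m : \beta_m$ is constant. I would rule this out using the freedom to choose $m$: since $\pi$ is a $p$-cycle, as $m$ ranges over $0, \dots, p-1$ the pair $(\pi^m(1), \pi^m(2))$ runs through $p$ distinct ordered pairs of adjacent elements in the cyclic order determined by $\pi$, and similarly for $(\pi^m(3), \pi^m(4))$. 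Pick $m$ so that $\pi^m(1)$ is the index achieving $\min_l c_l$; then $\alpha_m > 0$. For this same $m$, $\beta_m$ has some sign, say we want to compare with another index $m'$ chosen so that $\pi^{m'}(2)$ achieves the minimum, giving $\alpha_{m'} < 0$. Since $\alpha_m$ and $\alpha_{m'}$ have opposite signs while we need to check the $\beta$'s don't conspire to make both $d_m, d_{m'}$ parallel — if $d_m \parallel d_{m'}$ with $\alpha_m > 0 > \alpha_{m'}$, then $\beta_m$ and $\beta_{m'}$ must also have opposite signs; one then brings in a third index $m''$ with $\pi^{m''}(3)$ or $\pi^{m''}(4)$ achieving the minimum to force $\beta_{m''}$ to have a prescribed sign incompatible with proportionality, exactly as in the three-vector sign argument of Lemma~\ref{lem:transpositions-3cycles}. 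I expect this sign bookkeeping to be the main obstacle: one must be careful that the indices $\{1,2,3,4\}$ are genuinely distinct (true since $(12)(34)$ is a $2$-$2$-cycle) and that $\pi$ being a $p$-cycle with $p \geq 5$ leaves enough room to choose the required $m$'s independently for the two transpositions, which is where the hypothesis $p \geq 5$ enters.

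One subtlety to address explicitly: I need $\pi^m v \in H$ and $\sigma\pi^m v \in H$, which follows because $\pi \in \stab(C_0)$ implies $\pi^m C_0 \subseteq H$ (as $C_0 \subseteq H$ and $H$ is a real hyperplane, hence $\pi^m C_0 \subseteq \pi^m H$... ) — more carefully, $C_0 \subseteq H$ and $\sigma C_0 \subseteq H$ together with $\pi C_0 = C_0$ give $\pi^m C_0 = C_0 \subseteq H$ and $\pi^m \sigma C_0 \subseteq H$ only if $\pi^m$ preserves $H$, which is not automatic; so instead I observe directly that $C_0 \subseteq H$ and $\sigma C_0 \subseteq H$, and since $\pi$ stabilizes $C_0$, the curve $C_0$ meets $S_p v$ in an orbit of $\langle\pi\rangle$ (by Lemma~\ref{lem:irred-comp-same-degree} and Lemma~\ref{lem:stab-contain-p-cycle} its size is divisible by $p$, so it contains a full $\langle\pi\rangle$-orbit), giving $\pi^m u \in C_0 \subseteq H$ for some $u \in S_p v$ and all $m$; applying $\sigma$, $\sigma\pi^m u \in \sigma C_0 \subseteq H$. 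Replacing $v$ by this conjugate $u$ (harmless, as the hypotheses are $S_p$-equivariant in the appropriate sense and only $C_0, \sigma C_0 \subseteq H$ is used) puts us in the situation above with $\pi^m v, \sigma\pi^m v \in H$ for all $m$, and the computation of $d_m$ and the linear-independence argument then complete the proof.
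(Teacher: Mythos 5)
The key gap is the linear-independence step, which you flag as the crux but do not actually close — and in fact it can fail. You reduce to showing that two of the vectors $d_m=\alpha_m(e_i-e_j)+\beta_m(e_k-e_l)$ are independent in the $2$-dimensional space $W=\Span(e_i-e_j,\,e_k-e_l)$, where $\alpha_m=c_{\pi^m(j)}-c_{\pi^m(i)}$ and $\beta_m=c_{\pi^m(l)}-c_{\pi^m(k)}$. You then try to import the sign argument from Lemma~\ref{lem:transpositions-3cycles}, but that argument leans on the fact that for a $3$-cycle each coordinate of $d_m$ involves \emph{all three} of $\pi^m(1),\pi^m(2),\pi^m(3)$, so making one of them the minimum pins down two signs at once. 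Here $\alpha_m$ and $\beta_m$ involve \emph{disjoint} pairs $\{\pi^m(i),\pi^m(j)\}$ and $\{\pi^m(k),\pi^m(l)\}$, so controlling the sign of $\alpha_m$ gives no leverage on $\beta_m$, and the chain of implications does not close.

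Worse, the proportionality you are trying to rule out can genuinely occur. Relabel so that $\pi=(12\cdots p)$; then $\alpha_m=2\,\mathrm{Re}\bigl(\lambda(\zeta^{i_2}-\zeta^{i_1})\zeta^m\bigr)$ and $\beta_m=2\,\mathrm{Re}\bigl(\lambda(\zeta^{i_4}-\zeta^{i_3})\zeta^m\bigr)$ whenever $c_j=c_0+2\,\mathrm{Re}(\lambda\zeta^j)$, a real vector with distinct coordinates for generic $\lambda$. If $i_1+i_2\equiv i_3+i_4\pmod p$, the ratio $(\zeta^{i_4}-\zeta^{i_3})/(\zeta^{i_2}-\zeta^{i_1})$ is real, so $\beta_m=s\,\alpha_m$ for a fixed real $s$ and \emph{every} $m$: all the $d_m$ lie on one line, and you cannot conclude $W\subset H$. (For $p=7$, try $i_1=1,i_2=5,i_3=2,i_4=4$.) So the approach as written would leave exactly the degenerate configuration unhandled — which is also the configuration Lemma~\ref{lem:i-j-k-l-identity} later singles out as exceptional.

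The paper avoids this entirely. It invokes Corollary~\ref{cor:inv-subspace} to get a genuine $\langle\pi\rangle$-eigenvector $\sigma w\in\Span_\CC C_0$ with $w=(\zeta,\dots,\zeta^p)$, so $H$ contains both $\sigma w$ and $(ij)(kl)\sigma w$. Pairing their difference with the normal $(a_1,\dots,a_p)$ collapses, using $a_i=a_k=a$ and $a_j=a_l=b$, to the single identity $(a-b)\bigl(\zeta^{i'}-\zeta^{j'}+\zeta^{k'}-\zeta^{l'}\bigr)=0$; since a $\pm1$-combination of four distinct $p$-th roots of unity cannot vanish when $p\geq5$, one gets $a=b$ outright. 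This uses the full complex span of $C_0$ rather than the finite set $\{\pi^m u\}$, and is insensitive to the fine structure of the real vector $c$ — it succeeds precisely in the degenerate cases where your difference vectors collapse to a line. That is also where the hypothesis $p\geq5$ genuinely enters, not in "leaving room" to choose indices.
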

\begin{proof}
By permuting coordinates, we can assume $v\in C_0$. From Corollary \ref{cor:inv-subspace}, we know $\Span_\CC{C_0}$ contains $\sigma w$ for some $\sigma\in S_p$, where $w=(\zeta,\dots,\zeta^p)$ and $\zeta=e^{2\pi i/p}$. Thus $H$ contains both $\sigma w$ and $(ij)(kl)\sigma w$. Subtracting we find $w-(ij)(kl)\sigma w\in H=(a_1,\dots,a_p)^\perp$. Set $\alpha'=\sigma^{-1}(\alpha)$, we have 
\[(a-b)(\zeta^{i'}-\zeta^{j'}+\zeta^{k'}-\zeta^{l'}) = a(\zeta^{i'}-\zeta^{j'})+b(\zeta^{j'}-\zeta^{i'})+a(\zeta^{k'}-\zeta^{l'})+b(\zeta^{l'}-\zeta^{k'})=0.\]
Since $p\geq5$ and $i,j,k,l$ are distinct, we must have $a=b$.
\end{proof}

\begin{corollary}
\label{cor:product-max}
Let $p\geq5$ be a prime and $H=(a_1,\dots,a_p)^\perp$ be a hyperplane. Suppose $i_1,\dots,i_m$ are distinct, $j_1,\dots,j_n$ are distinct, $a_{i_1}=\dots=a_{i_m}$, $a_{j_1}=\dots=a_{j_n}$, and $a_{i_1}\neq a_{j_1}$. If $nm\geq p-1$, then $|H\cap S_pv|\leq (p-1)!$ for all $v\in\RR^p$ with distinct coordinates.
\end{corollary}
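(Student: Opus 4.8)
The plan is to apply Lemma \ref{lem:n-1-not-contained} exactly as in the proof of Corollary \ref{cor:distinct-coord}, using 2-2-cycles in place of transpositions. Let $C$ be the elementary symmetric curve associated to $v$, with irreducible components $C_1,\dots,C_r$. Write $a = a_{i_1}$ and $b = a_{j_1}$. For each component $C_s \subset H$, I want to produce $n-1$ (here $p-1$) other components, obtained by applying suitable 2-2-cycles $(i_\alpha j_\beta)(i_{\alpha'} j_{\beta'})$ to $C_s$, such that none of these lies in $H$ and such that the resulting labeled family has no coincidences across different $C_s$. Because there are $nm \geq p-1$ pairs $(i_\alpha, j_\beta)$, one can select $p-1$ disjoint 2-2-cycles of the form $\sigma_t = (i_{\alpha_t} j_{\beta_t})(i_{\alpha_t'} j_{\beta_t'})$ — indeed, a 2-2-cycle uses two $i$-indices and two $j$-indices, and using a bipartite-matching/Hall-type argument on the $m \times n$ grid of available pairs one gets at least roughly $\min(m,n) \cdot \lfloor \max(m,n)/\text{something}\rfloor$ disjoint such cycles; the precise combinatorial count showing one gets at least $p-1$ of them when $nm \geq p-1$ is the one bookkeeping step to nail down. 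Then set $C_{st} := \sigma_t C_s$.

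For property (i), I need $H \cap C_{st} \cap S_p v = \varnothing$. Suppose not; then some conjugate $\sigma_t u \in H$ with $u \in C_s \cap S_p v \subset H$, so $\sigma_t u - u \in H = (a_1,\dots,a_p)^\perp$. Since $\sigma_t$ swaps an index of $a$-value $a$ with one of $a$-value $b$ (twice), writing out the dot product gives a multiple of $(a-b)$ times a difference of coordinates of $u$, which is nonzero because $a \neq b$ and $u$ has distinct coordinates — contradiction. (This is the discrete analogue of Lemma \ref{lem:2-2-cycles-special}, but here done directly with the real points $u \in S_p v$ rather than with $\sigma w$, so it is even more elementary.) Actually, to be safe I would instead invoke Lemma \ref{lem:2-2-cycles-special} itself: if $C_{st} = \sigma_t C_s \subset H$ as well, then since $a_{i} = a_{i'} = a$ and $a_{j} = a_{j'} = b$ for the four indices of $\sigma_t$, the lemma forces $a = b$, contradiction; hence $C_{st} \not\subset H$, and then $H \cap C_{st}$ is proper, but I still need it to miss $S_p v$, which is the elementary computation above.

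For property (ii), I need $C_{sk} = C_{s'\ell}$ to force $s = s'$ and $k = \ell$. If $\sigma_k C_s = \sigma_\ell C_{s'}$ with both $C_s, C_{s'} \subset H$, then $\sigma_\ell^{-1}\sigma_k$ stabilizes... no: it sends $C_s$ to $C_{s'}$. If $k \neq \ell$ then $\sigma_\ell^{-1}\sigma_k = \sigma_\ell \sigma_k$ is a nontrivial permutation supported on the $i$- and $j$-indices, and I must rule out $\sigma_\ell \sigma_k \cdot C_s = C_{s'} \subset H$. The cleanest route: compose with the $p$-cycle $\pi \in \stab(C_s)$ from Lemma \ref{lem:stab-contain-p-cycle} and argue, as in Corollary \ref{cor:distinct-coord}, that having too many components of $C$ inside $H$ related by these small permutations would force additional equalities among the $a_i$ beyond $a$ and $b$ — but here the $a_i$ may only take two values, so I instead argue directly that $\sigma_\ell\sigma_k$ moves some index of $a$-value $a$ to one of $a$-value $b$, apply Lemma \ref{lem:2-2-cycles-special} (or the elementary difference argument) to the pair $C_s, \sigma_\ell\sigma_k C_s \subset H$, and again derive $a = b$. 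If instead $s \neq s'$ but $k = \ell$, then $C_s = C_{s'}$ immediately. The main obstacle is therefore the combinatorial selection step: choosing the $p-1$ disjoint 2-2-cycles $\sigma_t$ out of the $m \times n$ grid and simultaneously guaranteeing that all the pairwise products $\sigma_k\sigma_\ell$ (for $k \neq \ell$) are "genuinely mixing" (move some $a$-index to a $b$-index), so that Lemma \ref{lem:2-2-cycles-special} applies in every case; once that selection is set up, properties (i) and (ii) follow from Lemma \ref{lem:2-2-cycles-special} and Lemma \ref{lem:n-1-not-contained} gives $|H \cap S_p v| \leq (p-1)!$.
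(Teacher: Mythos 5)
Your approach diverges from the paper's and has several genuine gaps. The paper's proof is considerably simpler: it takes the modifying permutations to be the \emph{single transpositions} $(i_k, j_l)$ for $1 \leq k \leq m$, $1 \leq l \leq n$. This immediately gives $nm \geq p-1$ distinct curves $(i_k,j_l)C_0$ with no combinatorial selection needed. Property (i) then follows at once from the first part of Lemma~\ref{lem:transpositions-3cycles}, since $a_{i_k} \neq a_{j_l}$. For property (ii), the composition $(i_{k'},j_{l'})(i_k,j_l)$ of two such transpositions is either the identity, a 3-cycle, or a 2-2-cycle, and these last two are exactly what Lemma~\ref{lem:transpositions-3cycles} (second part, with the $p$-cycle from Lemma~\ref{lem:stab-contain-p-cycle}) and Lemma~\ref{lem:2-2-cycles-special} rule out.

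Your 2-2-cycle approach breaks down in several places. First, the combinatorial step cannot be nailed down: each 2-2-cycle $(i_\alpha j_\beta)(i_{\alpha'} j_{\beta'})$ consumes two $i$-indices and two $j$-indices, so the number of pairwise disjoint such cycles is bounded by $\min(\lfloor m/2\rfloor, \lfloor n/2 \rfloor)$; taking $m=2$ and $n = (p-1)/2$ gives $nm = p-1$ but only one disjoint 2-2-cycle, nowhere near $p-1$. Second, even for a single 2-2-cycle $\sigma_t$, your ``elementary difference argument'' for property (i) fails: with $\sigma_t = (i j)(i' j')$, $a_i = a_{i'} = a$, $a_j = a_{j'} = b$, the dot product of $\sigma_t u - u$ with the normal vector is $(a-b)\bigl[(u_j - u_i) + (u_{j'} - u_{i'})\bigr]$, which vanishes whenever $u_i + u_{i'} = u_j + u_{j'}$ --- this is a nontrivial but perfectly possible condition on a point $u$ with distinct coordinates, so there is no contradiction. (This is precisely why the paper's Lemma~\ref{lem:2-2-cycles-special} must go through the complex eigenvector $\sigma w$ from Corollary~\ref{cor:inv-subspace} and use linear independence of $p$-th roots of unity; and note that that lemma gives only $C_{st} \not\subset H$, which is weaker than the needed $H \cap C_{st} \cap S_p v = \varnothing$.) Third, for property (ii), the product $\sigma_\ell \sigma_k$ of two 2-2-cycles is in general a product of four disjoint transpositions (if the cycles are disjoint) or something else entirely (if not), and neither Lemma~\ref{lem:transpositions-3cycles} nor Lemma~\ref{lem:2-2-cycles-special} applies to such permutations. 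Switching from 2-2-cycles back to single transpositions, as in the paper, avoids all three problems simultaneously.
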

\begin{proof}
Let $C$ be the elementary symmetric curve associated to $v$. We will prove the corollary by applying Lemma \ref{lem:n-1-not-contained}. For each irreducible component $C_0$ of $C$ satisfying $C_0\subset H$, consider the $nm\geq p-1$ curves $$\{(i_k,j_l)C_0:1\leq k\leq m, 1\leq  l\leq n\}.$$ By Lemma \ref{lem:transpositions-3cycles}, we see $H\cap(i_k,j_l)C_0\cap S_pv=\varnothing$.

Next, if $C_0\subset H$ and $(i_k,j_l)C_0=(i_{k'},j_{l'})C_0$, then $H$ contains both $C_0$ and $(i_{k'},j_{l'})(i_k,j_l)C_0$. Similarly, if $H$ contains distinct irreducible component $C'_0$ and $C_0$, and if $(i_k,j_l)C_0=(i_{k'},j_{l'})C'_0$, then $H$ contains both $C_0$ and $(i_{k'},j_{l'})(i_k,j_l)C_0$. By Lemmas \ref{lem:transpositions-3cycles} and \ref{lem:2-2-cycles-special}, this is not possible as $(i_{k'},j_{l'})(i_k,j_l)$ is either a 2-2-cycle or a 3-cycle; in the case of a 3-cycle, we apply Lemma \ref{lem:transpositions-3cycles} by taking $\pi\in\stab(C_0)$ to be the $p$-cycle constructed in Lemma \ref{lem:stab-contain-p-cycle}.
\end{proof}

As a further application, we prove Theorem \ref{thm:conj-for-primes} for $p=3,5$.

\begin{corollary}
\label{cor:p=3,5}
Let $p\in\{3,5\}$ and $H=(a_1,\dots,a_p)^\perp$ be a hyperplane of $\RR^p$. If $v\in\RR^p$ has distinct coordinates not summing to $0$, then $|H\cap S_pv|\leq (p-1)!$.
\end{corollary}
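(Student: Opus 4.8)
The plan is to dispose of this by a short case analysis on the multiset of entries of the normal vector $(a_1,\dots,a_p)$ of $H$, in each case appealing to a result already established. First I would note that if some entry $a_i$ differs from all of the others, then Corollary~\ref{cor:distinct-coord} immediately gives $|H\cap S_pv|\le (p-1)!$. Hence I may assume that every value taken by the $a_i$ is taken at least twice; equivalently, the partition of $p$ recording the multiplicities of the distinct values among $a_1,\dots,a_p$ has all parts $\ge 2$. For $p=3$ the only such partition is $3$, forcing $a_1=a_2=a_3$; for $p=5$ the only such partitions are $5$ and $3+2$, so either $a_1=\dots=a_5$ or the values split with multiplicities $3$ and $2$.

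Next I would handle the two remaining kinds of case. In the ``all equal'' case, since $(a_1,\dots,a_p)\ne 0$ the hyperplane $H$ is precisely $\{\sum_i x_i = 0\}$; as the coordinates of $v$ have nonzero sum, so do those of every element of $S_pv$, whence $H\cap S_pv=\varnothing$ and the bound is trivial. This finishes $p=3$ outright. For $p=5$ with multiplicity split $3+2$, let $i_1,i_2,i_3$ be the indices carrying one common value and $j_1,j_2$ the indices carrying the other; then $a_{i_1}=a_{i_2}=a_{i_3}$, $a_{j_1}=a_{j_2}$, $a_{i_1}\ne a_{j_1}$, and $3\cdot 2 = 6 \ge 4 = p-1$, so Corollary~\ref{cor:product-max} applies and yields $|H\cap S_pv|\le (p-1)!$. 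As these cases are exhaustive, the result follows.

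I do not anticipate any real obstacle: the substantive representation-theoretic and combinatorial input has already been supplied by Lemma~\ref{lem:transpositions-3cycles}, Lemma~\ref{lem:2-2-cycles-special}, and Corollaries~\ref{cor:distinct-coord} and~\ref{cor:product-max}. The entire point of this proof is that for $p\in\{3,5\}$ the partitions of $p$ with all parts at least $2$ are so few---namely $3$, $5$, and $3+2$---that the hypotheses of those earlier results already cover every hyperplane, modulo the trivial remark that the hyperplane $\sum_i x_i = 0$ meets no $S_p$-orbit of a vector whose coordinate sum is nonzero.
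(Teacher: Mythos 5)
Your proposal is correct and is essentially the same argument as the paper's: dispose of the all-equal (i.e.~$H=(1,\dots,1)^\perp$) case trivially, invoke Corollary~\ref{cor:distinct-coord} when some $a_i$ is unique, and for $p=5$ handle the remaining $3+2$ multiplicity pattern via Corollary~\ref{cor:product-max}. You merely make the case analysis on multiplicity partitions a bit more explicit than the paper does.
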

\begin{proof}
Since the coordinates of $v$ do not sum to $0$, we know $H\neq(1,\dots,1)^\perp$. For $p=3$, our desired result then follows directly from Corollary \ref{cor:distinct-coord}. When $p=5$, 
Corollary \ref{cor:distinct-coord} reduces us to the case $H=(a,a,b,b,b)^\perp$ for some distinct $a,b\in \RR$. Our result then follows from Corollary \ref{cor:product-max}.
\end{proof}

We end this section with some more refined results concerning the structure of hyperplanes that contain $v$ and $\sigma v$ with $\sigma$ a 2-2-cycle.

\begin{lemma}
\label{lem:2-2-cycles}
Let $p\geq 5$ be prime, $v\in\RR^p$ have distinct coordinates, and $H=(a_1,\dots,a_p)^\perp$ be a hyperplane. Suppose $\sigma=(ij)(kl)$ is a 2-2-cycle and $\pi$ is an $p$-cycle. Let $G\subset S_p$ be a subgroup that contains $\pi$ and assume $\dim\Span(Gv)>3$. If $H$ contains $Gv$ and $\sigma G v$, then $a_i=a_j$ and $a_k=a_l$.
\end{lemma}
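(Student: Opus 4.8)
The statement to prove is Lemma~\ref{lem:2-2-cycles}. The plan is to follow the same strategy as in Lemma~\ref{lem:2-2-cycles-special}, but to exploit the hypothesis $\dim\Span(Gv)>3$ instead of passing to the single vector $w$. Since $H$ contains $Gv$ and $\sigma Gv$, for every $g\in G$ the difference $gv-\sigma gv$ lies in $H=(a_1,\dots,a_p)^\perp$. Now $gv-\sigma gv$ is supported on the four coordinates $\{i,j\}\cup\{k,l\}$ and has the shape $(\dots,\beta,\dots,-\beta,\dots,\gamma,\dots,-\gamma,\dots)$ in positions $i,j,k,l$ respectively, where $\beta=(gv)_i-(gv)_j$ and $\gamma=(gv)_k-(gv)_l$. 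The orthogonality condition gives $(a_i-a_j)\beta+(a_k-a_l)\gamma=0$ for every $g\in G$. So it suffices to show that, as $g$ ranges over $G$, the pairs $(\beta_g,\gamma_g)=((gv)_i-(gv)_j,\ (gv)_k-(gv)_l)$ span $\RR^2$; then the linear functional $(a_i-a_j,a_k-a_l)$ must vanish, giving $a_i=a_j$ and $a_k=a_l$ simultaneously.

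**Where the dimension hypothesis enters.**
Consider the linear map $T\colon \Span(Gv)\to\RR^4$ sending a vector $x$ to its restriction $(x_i,x_j,x_k,x_l)$ to the four relevant coordinates. The kernel of $T$ is the subspace of $\Span(Gv)$ vanishing on these four coordinates; since $\dim\Span(Gv)>3$, either $T$ is injective-image has dimension $\geq 4$ (impossible inside $\RR^4$ unless it is all of $\RR^4$), or $\dim\im(T)\geq\dim\Span(Gv)-\dim\ker(T)$. The cleaner route: I will argue that $\im(T)$ is not contained in the $2$-dimensional subspace $\{y_i=y_j,\ y_k=y_l\}$ of $\RR^4$, because $Gv$ spans a space of dimension $>3$ and $\pi$ acts transitively on coordinates. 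Concretely, if every $x\in\Span(Gv)$ satisfied $x_i=x_j$ and $x_k=x_l$, then — transporting by powers of $\pi$, which cyclically permutes all $p$ coordinates — $\Span(Gv)$ would lie in an intersection of many coordinate-equality hyperplanes, forcing $\dim\Span(Gv)\leq 3$ (in fact one must track how the orbit of the index pairs $\{i,j\}$, $\{k,l\}$ under $\langle\pi\rangle$ partitions $\{1,\dots,p\}$, and since $p$ is prime and $\pi$ is a $p$-cycle the orbit structure is rigid). This contradiction produces an $x\in\Span(Gv)$ with $(x_i,x_j,x_k,x_l)$ outside that subspace, hence a difference vector realizing a pair $(\beta,\gamma)$ with $\beta\neq 0$ or $\gamma\neq 0$ in a "new direction."

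**Finishing.**
Having found that the map $g\mapsto(\beta_g,\gamma_g)$ has image spanning $\RR^2$ (equivalently, that $\{gv-\sigma gv : g\in G\}$ is not contained in any single hyperplane of the form $\{s x_i + t x_j' \dots\}$ cutting out a one-dimensional family), the relation $(a_i-a_j)\beta_g+(a_k-a_l)\gamma_g=0$ for all $g$ forces the vector $(a_i-a_j,\,a_k-a_l)$ to be zero. I should be careful to phrase the span argument purely in terms of $\Span(Gv)$ rather than individual $gv$: the difference $x-\sigma x$ depends $\RR$-linearly on $x$, so it is enough that the image of $x\mapsto(x_i-x_j,\ x_k-x_l)$ on $\Span(Gv)$ is all of $\RR^2$, and this is exactly the negation of "$\Span(Gv)\subset\{x_i=x_j\}$ or $\Span(Gv)\subset\{x_k=x_l\}$ or $\Span(Gv)\subset\{x_i-x_j=\lambda(x_k-x_l)\}$ for some fixed $\lambda$" — each of which I must rule out using $\dim\Span(Gv)>3$ together with invariance under $\pi$.

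**The main obstacle.**
The delicate point is the linear-algebra step: showing that $\dim\Span(Gv)>3$ genuinely prevents $\Span(Gv)$ from lying in the relevant $\leq 2$-dimensional coordinate conditions on the four indices $i,j,k,l$. The bound $3$ is sharp (the subspace $\{x_i=x_j,\ x_k=x_l\}$ has codimension $2$, and one also has to worry about the all-ones line and the hyperplane $\sum x_m=0$), so the argument must use the $\langle\pi\rangle$-invariance of $\Span(Gv)$ crucially — propagating a single coordinate-equality $x_i=x_j$ around the $p$-cycle to deduce $\Span(Gv)$ sits in a representation of $\langle\pi\rangle$ of dimension at most $3$, contradicting the hypothesis. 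Making this propagation precise (tracking the $\langle\pi\rangle$-orbit of the unordered pairs and invoking primality of $p$) is the part that needs the most care; everything else is the routine difference-vector computation already rehearsed in Lemmas~\ref{lem:transpositions-3cycles} and~\ref{lem:2-2-cycles-special}.
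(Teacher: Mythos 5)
Your framework is correct: you've reduced the lemma to showing that $\Phi\colon\Span(Gv)\to\RR^2$, $\Phi(x)=(x_i-x_j,\,x_k-x_l)$, is surjective, and you've correctly noted that this is equivalent to $\Span(Gv)$ not being contained in any hyperplane of the form $W=\{s(x_i-x_j)+t(x_k-x_l)=0\}$ with $(s,t)\neq(0,0)$. You are also correct that, since $\Span(Gv)$ is $\langle\pi\rangle$-invariant, it suffices to show that $W^{\langle\pi\rangle}:=\bigcap_m\pi^m W$ has dimension at most $3$. But at precisely this point you have a genuine gap, and the mechanism you propose (``propagating a coordinate-equality $x_i=x_j$ around the $p$-cycle'') does not extend beyond the degenerate cases $(s,t)=(1,0)$ or $(0,1)$. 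For the generic hyperplane $W$ with $s,t$ both nonzero, the defining relation $s(x_i-x_j)+t(x_k-x_l)=0$ is not a coordinate equality and does not chain around the cycle in the way you suggest. What actually controls $\dim W^{\langle\pi\rangle}$ is a Fourier-analytic/representation-theoretic count: over $\CC$, $W^{\langle\pi\rangle}=\bigoplus_q V_q$ where $q$ runs over those characters of $\ZZ/p$ for which the defining functional vanishes on the character vector $w_q=(1,\zeta^q,\dots,\zeta^{(p-1)q})$. The trivial character $q=0$ always contributes, nonzero contributing $q$'s occur in conjugate pairs (since $s,t$ are real), and the real question is whether \emph{two distinct} conjugate pairs $\{q,-q\}$, $\{q',-q'\}$ can simultaneously contribute.

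Ruling out two distinct conjugate pairs is exactly where the paper does its real work, and this cannot be done by orbit-propagation combinatorics — it requires an arithmetic argument. The paper's route: since $\dim\Span(Gv)>3$, the complexification $\Span_\CC(Gv)$ contains (up to scaling) three character vectors $w_1,w_2=\bar w_1,w_3$ with characters $\chi,\chi^{-1},\chi^m$, $m\not\equiv 0,\pm1\bmod p$; one normalizes each $u_d=\zeta^{t_d}w_d$ so that its $i$-th coordinate equals $1$, and writes $\sigma u_d-u_d\in H_\CC$. The resulting pairs $(\alpha_d,\beta_d)=\Phi_\CC(u_d)$ are shown to not all be proportional: if $(\alpha_1,\beta_1)\parallel(\alpha_2,\beta_2)$ one derives a four-term relation $x^a-x^b=x^{1-b}-x^{1-a}$ in a primitive $p$-th root of unity $x$, which — by reducing exponents mod $p$ and comparing term counts against the $p$-th cyclotomic polynomial — forces $a+b\equiv1\bmod p$; then $(\alpha_1,\beta_1)\parallel(\alpha_3,\beta_3)$ yields an eight-term relation (the paper's equation (4.2)) which cannot be a multiple of $\Phi_p$ for $p\geq 11$, with a separate short argument for $p=5,7$. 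You would need this cyclotomic machinery (integer-coefficient polynomial of degree $<p$, at most $8$ terms, vanishing at a primitive $p$-th root of unity is forced to be zero) to complete your plan; without it, the key claim $\dim W^{\langle\pi\rangle}\leq 3$ is unproved, and the ``propagation'' heuristic does not supply it.

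Two smaller notes. First, the hyperplane $\{x_i-x_j=\lambda(x_k-x_l)\}$ is not $\langle\pi\rangle$-invariant, so passing to $W^{\langle\pi\rangle}$ is the right move — you gesture at this but don't carry it out. Second, you flag ``the all-ones line and the hyperplane $\sum x_m=0$'' as worries; these are red herrings for this lemma (the all-ones line always lies in $\ker\Phi$ and is accounted for by the $q=0$ summand). The genuine subtlety is the one described above.
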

\begin{proof}
By permuting coordinates we can assume $\pi=(12\dots p)$. Note that the subspace $\Span(Gv)\subset\RR^p$ is invariant under the action of $\langle\pi\rangle\simeq\ZZ/p$. Since $\dim\Span(Gv)>3$, when viewed as a complex $\ZZ/p$-representation, it contains $w_1=(1,\zeta,\dots,\zeta^{p-1})$, $w_2=(1,\zeta^{-1},\dots,\zeta^{-(p-1)})$, and $w_3=(1,\zeta^{m},\dots,\zeta^{m(p-1)})$ for some primitive $p$-th root of unity $\zeta$ and some $m\neq0,\pm1$ mod $p$.

For $d\in\{1,2,3\}$, let $t_d$ be such that the $i$-th coordinate of $u_d:=\zeta^{t_d}w_d$ is $1$, where $i$ is as in the statement of the lemma. Since $u_d\in\<\pi\>w_d\subset\Span(Gv)$, we see $\sigma u_d\in\Span(\sigma Gv)$, so in particular,
\[
\sigma u_d - u_d\in H.
\]
Let $x$ be the $j$-th coordinate of $u_1$. Then the $k$-th and $l$-th coordinates of $u_1$ are, respectively, $x^a$ and $x^b$ for distinct $a,b\in\{2,\dots,p-1\}$. The $j$-th, $k$-th, and $l$-th coordinates of $u_2$ are then $x^{-1}$, $x^{-a}$, and $x^{-b}$, respectively. The $j$-th, $k$-th, and $l$-th coordinates of $u_3$ are $x^{m}$, $x^{ma}$, and $x^{mb}$, respectively. Letting $\alpha=1-x$, $\beta=x^a-x^b$, $\alpha'=1-x^{-1}$, and $\beta'=x^{-a}-x^{-b}$, we find
\[
(\dots,\alpha,\dots,-\alpha,\dots,\beta,\dots,-\beta,\dots)=\sigma u_1-u_1\in H
\]
and 
\[
(\dots,\alpha',\dots,-\alpha',\dots,\beta',\dots,-\beta',\dots)=\sigma u_2-u_2\in H
\]
where the omitted entries are 0, and the four non-zero entries are in the $j,i,l,k$-th positions respectively. 

If $(\alpha,\beta),(\alpha',\beta')$ are linearly independent, then have $(\dots,1,\dots,-1,\dots,0,\dots,0,\dots)\in H$ which implies $a_i=a_j$, and consequently $a_k=a_l$.

Next suppose $(\alpha,\beta)$ and $(\alpha',\beta')$ are linearly dependent. Then 
\[\frac{x^a - x^b}{1-x}  = \frac{\beta}{\alpha} = \frac{\beta'}{\alpha'} = \frac{x^{-a} - x^{-b}}{1-x^{-1}} = \frac{x^{1-a} - x^{1-b}}{x-1}\]
and hence
\begin{equation}
\label{poly1}
x^a - x^b = x^{1-b} - x^{1-a}.
\end{equation}
Since $b\neq a$, this is a contradiction by the linear independence of roots of unity over $\QQ$, unless $a+b=1$ mod $p$. 

So, we may suppose $a+b=1$ mod $p$. Consider
\[
(\dots,\alpha'',\dots,-\alpha'',\dots,\beta'',\dots,-\beta'',\dots)=\sigma u_3-u_3\in H
\]
where $\alpha''=1-x^m$ and $\beta''=x^{ma}-x^{mb}$. If $(\alpha,\beta),(\alpha'',\beta'')$ are linearly independent, we again arrive at our desired conclusion that $a_i=a_j$ and $a_k=a_l$, so we may assume $(\alpha,\beta),(\alpha'',\beta'')$ are linearly dependent. Then
\[\frac{1-x}{x^{a}-x^{1-a}}=\frac{\alpha}{\beta}=\frac{\alpha''}{\beta''}=\frac{1-x^{m}}{x^{ma}-x^{m(1-a)}}\]
and so
\begin{equation}
\label{poly2}
x^{ma}-x^{m(1-a)}-x^{ma+1}+x^{m-ma+1}-x^a+x^{1-a}+x^{m+a}-x^{1-a+m}=0.
\end{equation}
Let $f(x)$ be the polynomial (\ref{poly2}), where the exponents are taken to be numbers between $0$ and $p$ by reducing mod $p$, and we now view $x$ as an indeterminate. Since $f(x)$ has integer coefficients, has degree less than $p$, and has a primitive $p$-th root of unity as a root, it is a constant multiple of the $p$-th cyclotomic polynomial. Note the term $x^{ma}$ cannot be cancelled by any other term since $m\neq0,\pm1$ mod $p$, so $f(x)$ is a non-zero polynomial with at most 8 terms. In particular, it is not a multiple of the cyclotomic polynomial for $p\geq11$.

When $p= 5,7$, since $f(x)$ is a non-zero constant multiple of the $p$-th cyclotomic polynomial, some of the terms in $f(x)$ must cancel to yield exactly $p$ terms all with the same non-zero coefficient. This is impossible, however, as $f(x)$ has 4 terms with coefficient equal to $1$ and 4 terms with coefficient equal to $-1$.
\end{proof}


\begin{lemma}
\label{lem:i-j-k-l-identity}
Let $p\geq7$ be a prime, $\zeta=e^{2\pi i/p}$, $w=(\zeta,\zeta^2,\dots,\zeta^p)$, and $\sigma\in S_p$. If $a_1,\dots,a_p\in\RR$ and $H=(a_1,\dots,a_p)^\perp$ is a hyperplane that contains both $\sigma w$ and $(ij)(kl)\sigma w$ with $i,j,k,l$ distinct, then either
\[\sigma^{-1}(i)+\sigma^{-1}(j)=\sigma^{-1}(k)+\sigma^{-1}(l)\mod p\]
or $a_i=a_j$ and $a_k=a_l$.
\end{lemma}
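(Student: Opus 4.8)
The plan is to subtract the two vectors that $H$ is assumed to contain and analyze the resulting single linear relation. Recall that the $m$-th coordinate of $\epsilon(v)$ is $c_{\epsilon^{-1}(m)}$, so $\sigma w$ has $m$-th coordinate $\zeta^{\sigma^{-1}(m)}$, while $(ij)(kl)\sigma w$ has $m$-th coordinate $\zeta^{\sigma^{-1}((ij)(kl)(m))}$ (since $(ij)(kl)$ is its own inverse). These two vectors therefore agree outside the coordinates $i,j,k,l$, and, writing $I=\sigma^{-1}(i)$, $J=\sigma^{-1}(j)$, $K=\sigma^{-1}(k)$, $L=\sigma^{-1}(l)$, the difference $\sigma w-(ij)(kl)\sigma w$ has $i$-th coordinate $\zeta^I-\zeta^J$, $j$-th coordinate $\zeta^J-\zeta^I$, $k$-th coordinate $\zeta^K-\zeta^L$, $l$-th coordinate $\zeta^L-\zeta^K$, and all other coordinates $0$; note that $I,J,K,L$ are distinct because $i,j,k,l$ are. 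Since $H=(a_1,\dots,a_p)^\perp$ contains both $\sigma w$ and $(ij)(kl)\sigma w$ (over $\CC$), it contains their difference, so setting $A=a_i-a_j$ and $B=a_k-a_l$ we get
\[
A(\zeta^I-\zeta^J)+B(\zeta^K-\zeta^L)=0.
\]

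The key point is then to exploit that $A$ and $B$ are real. Conjugating the relation above and using $\zeta^{-I}-\zeta^{-J}=-\zeta^{-(I+J)}(\zeta^I-\zeta^J)$ (and similarly for $K,L$), we obtain
\[
A\zeta^{-(I+J)}(\zeta^I-\zeta^J)+B\zeta^{-(K+L)}(\zeta^K-\zeta^L)=0.
\]
If $A=0$, the first relation forces $B(\zeta^K-\zeta^L)=0$, and since $\zeta^K\neq\zeta^L$ we get $B=0$, i.e.\ $a_i=a_j$ and $a_k=a_l$, which is the second alternative. If $A\neq0$, substitute $B(\zeta^K-\zeta^L)=-A(\zeta^I-\zeta^J)$ from the first relation into the second to get $A(\zeta^I-\zeta^J)\bigl(\zeta^{-(I+J)}-\zeta^{-(K+L)}\bigr)=0$; since $A\neq0$ and $\zeta^I\neq\zeta^J$, this forces $\zeta^{I+J}=\zeta^{K+L}$, i.e.\ $\sigma^{-1}(i)+\sigma^{-1}(j)\equiv\sigma^{-1}(k)+\sigma^{-1}(l)\pmod p$, which is the first alternative.

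There is no serious obstacle here: the only things requiring care are the permutation bookkeeping in the computation of the difference vector, and the observation that the distinctness of $i,j,k,l$ enters only through $\zeta^I\neq\zeta^J$ and $\zeta^K\neq\zeta^L$. Equivalently, the final step admits a geometric reading: $\zeta^I-\zeta^J=2i\sin(\pi(I-J)/p)\,e^{\pi i(I+J)/p}$, so the chord of the regular $p$-gon from $\zeta^J$ to $\zeta^I$ points in the direction $e^{\pi i(I+J)/p}$ turned by a right angle, and the hypothesis says the chords $\zeta^I-\zeta^J$ and $\zeta^K-\zeta^L$ are linearly dependent over $\RR$ (unless $A$, and hence also $B$, vanishes), which happens exactly when these directions agree modulo $\pi$, i.e.\ when $I+J\equiv K+L\pmod p$. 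Finally, I would note that neither the primality of $p$ nor the bound $p\geq7$ is actually used in this lemma beyond guaranteeing the existence of four distinct indices; they are retained only for uniformity with the neighbouring results.
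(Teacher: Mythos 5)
Your proof is correct, and it takes a genuinely cleaner route than the paper's. Both arguments start the same way: take the difference $\sigma w-(ij)(kl)\sigma w\in H$, pair against $(a_1,\dots,a_p)$ to get $A(\zeta^I-\zeta^J)+B(\zeta^K-\zeta^L)=0$ with $A=a_i-a_j$, $B=a_k-a_l$ real, and then bring in complex conjugation. The paper, however, phrases the dichotomy as linear independence versus dependence of the two projected vectors $(\zeta^{j'}-\zeta^{i'},\zeta^{l'}-\zeta^{k'})$ and its conjugate; in the dependent case it cross-multiplies to produce an eight-term Laurent polynomial vanishing at $\zeta$, and then argues via divisibility by the $p$-th cyclotomic polynomial (using that an integer polynomial of degree $<p$ with too few terms cannot be a nonzero multiple of $\Phi_p$) to extract the congruence. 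That is where the hypothesis $p\geq7$ is consumed. Your observation that $\zeta^{-I}-\zeta^{-J}=-\zeta^{-(I+J)}(\zeta^I-\zeta^J)$ short-circuits all of this: substituting $B(\zeta^K-\zeta^L)=-A(\zeta^I-\zeta^J)$ into the conjugated relation gives $A(\zeta^I-\zeta^J)\bigl(\zeta^{-(I+J)}-\zeta^{-(K+L)}\bigr)=0$ directly, so either $A=B=0$ or $I+J\equiv K+L\pmod p$, with no term-counting and no use of $\Phi_p$. As you note, this removes the need for $p\geq7$ (and even for primality of $p$) in this particular lemma; all that is required is $\zeta^I\neq\zeta^J$ and $\zeta^K\neq\zeta^L$, which hold as soon as $I,J,K,L$ are distinct residues mod $p$. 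One small bookkeeping point worth flagging: it is exactly the reality of $a_i-a_j$ and $a_k-a_l$ (not merely of the $a$'s individually) that justifies conjugating the scalar equation coefficientwise, which you use correctly but implicitly.
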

\begin{proof}
For ease of notation, we let $\alpha'=\sigma^{-1}(\alpha)$ for $\alpha=1,\dots,p$. First note that $H$ contains the element
\[\sigma w-(ij)(kl)\sigma w=(\dots,\zeta^{j'}-\zeta^{i'},\dots,\zeta^{i'}-\zeta^{j'},\dots,\zeta^{l'}-\zeta^{k'},\dots,\zeta^{k'}-\zeta^{l'},\dots)\]
where the omitted entries are 0, and the four non-zero entries are in the $j,i,l,k$-th positions respectively. Since $(a_1,\dots,a_p)$ is a real vector, $H$ also contains the complex conjugate vector 
\[
(\dots,\zeta^{-j'}-\zeta^{-i'},\dots,\zeta^{-i'}-\zeta^{-j'},\dots,\zeta^{-l'}-\zeta^{-k'},\dots,\zeta^{-k'}-\zeta^{-l'},\dots).
\]

Now, if the two vectors $(\zeta^{j'}-\zeta^{i'},\zeta^{l'}-\zeta^{k'}),(\zeta^{-j'}-\zeta^{-i'},\zeta^{-l'}-\zeta^{-k'})$ are linearly independent, then $(\dots,1,\dots,-1,\dots,0,\dots,0,\dots)\in H$, which means $a_i=a_j$, from which it follows that $a_k=a_l$. Otherwise,
\[\frac{\zeta^{j'}-\zeta^{i'}}{\zeta^{l'}-\zeta^{k'}}=\frac{\zeta^{-j'}-\zeta^{-i'}}{\zeta^{-l'}-\zeta^{-k'}}\]
and hence 
\[-\zeta^{j' - k'} + \zeta^{j' - l'} + \zeta^{-k' + i'} - \zeta^{-l' + i'}  +\zeta^{k' - j'} - \zeta^{l' - j'} - \zeta^{k' - i'} + \zeta^{l' - i'}=0.\]

Consider the polynomial $$f(z):=-z^{j' - k'} + z^{j' - l'} + z^{-k' + i'} - z^{-l' + i'}  +z^{k' - j'} - z^{l' - j'} - z^{k' - i'} + z^{l' - i'}$$ where we view the exponents as numbers between $0$ and $p$ by reducing mod $p$. Since $\deg f(z)<p$ and since $f(z)$ is a polynomial with integer coefficients satisfying $f(\zeta)=0$, it must be the case that $f(z)$ is a constant multiple of the $p$-th cyclotomic polynomial. For $p\geq11$, since $f(z)$ has at most $8$ terms, this forces $f(z)=0$; in particular two terms of $f(z)$ must cancel. Similarly, for $p=7$, we know $f(z)$ has at most $6$ terms, and so two terms in the above expression must cancel. In all cases, when $p\geq7$, we must have $z^{j'-k'}=z^{l'-i'}$ since $i,j,k,l$ are distinct mod $p$.
So, $j'-k'=l'-i'$ mod $p$, and hence $i'+j'=k'+l'$ mod $p$.
\end{proof}

\section{Theorem \ref{thm:conj-for-primes} in the non-dihedral case}
\label{sec:non-D2p}

Given the algebro-geometric results in Section \ref{sec:AG-input}, the proof of Theorem \ref{thm:conj-for-primes} is divided into two cases, depending on whether or not the stabilizer of $C_1$ is the dihedral group $D_{2p}$ with $2p$ elements. In this section, we prove the following result, which handles the non-dihedral case:

\begin{theorem}
\label{thm:conj-for-primes-special}
Let $p$ be a prime and $v\in\RR^p$ have distinct coordinates that do not sum to 0. If $\stab(C_1)\not\simeq D_{2p}$, then $\max_H |H\cap S_nv|=(p-1)!$.
\end{theorem}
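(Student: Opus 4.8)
The goal is to show that when $\stab(C_1)\not\simeq D_{2p}$, every hyperplane $H$ satisfies $|H\cap S_pv|\leq(p-1)!$; combined with Example~\ref{ex:bound-is-attained-general} this gives equality. By Proposition~\ref{prop:conj-odd-->even} we may assume $n=p$ is prime. Fix such an $H$ and, aiming to apply Lemma~\ref{lem:n-1-not-contained}, suppose $|H\cap S_pv|>(p-1)!$; by Lemma~\ref{lem:transitive-action-components}(\ref{lem:transitive-action-components::hyp-contain-Ci}), $H$ then contains some irreducible component $C_i$. The strategy is: for each $C_i\subset H$, produce $p-1$ distinct components $C_{i1},\dots,C_{i,p-1}$ of $C$ with $H\cap C_{ij}\cap S_pv=\varnothing$, and such that the whole collection $\{C_{ij}\}$ is ``injectively indexed'' in the sense of condition (ii) of Lemma~\ref{lem:n-1-not-contained}; that lemma then forces $|H\cap S_pv|\leq(p-1)!$, a contradiction.

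\textbf{Key steps.} First I would use Lemma~\ref{lem:stab-contain-p-cycle} to fix a $p$-cycle $\pi\in\stab(C_i)$ for each $C_i\subset H$, and analyze $\stab(C_i)$ as a subgroup of $S_p$ containing a $p$-cycle: such subgroups lie inside the normalizer of $\langle\pi\rangle$, which is the one-dimensional affine group $\mathrm{AGL}(1,p)$ of order $p(p-1)$, so $\stab(C_i)$ is either $\langle\pi\rangle$ itself or a metacyclic group $C_p\rtimes C_d$ with $d\mid p-1$; the dihedral case $D_{2p}$ is exactly $d=2$, which is excluded by hypothesis. Next, the candidates for the curves $C_{ij}$ should be $\sigma C_i$ as $\sigma$ ranges over a carefully chosen set of $p-1$ elements of $S_p$ whose associated differences $\sigma v - \text{(something)}$ or $w-\sigma w$ land in $H$ only in degenerate ways. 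The natural first choice is transpositions $(1j)$ (or more generally conjugates of a fixed transposition by powers of $\pi$), using Lemma~\ref{lem:transpositions-3cycles} to get $H\cap(1j)C_i\cap S_pv=\varnothing$, exactly as in Corollary~\ref{cor:distinct-coord}; when that fails because the entries of the normal vector $(a_1,\dots,a_p)$ are too equal, one falls back on Corollaries~\ref{cor:distinct-coord} and~\ref{cor:product-max} to dispatch the highly symmetric normal vectors directly, and on Lemmas~\ref{lem:2-2-cycles} and~\ref{lem:i-j-k-l-identity} to rule out coincidences $C_{ik}=C_{j\ell}$ that would arise from a $2$-$2$-cycle or $3$-cycle lying in some $\stab(C_i)$. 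The point where $\stab(C_1)\not\simeq D_{2p}$ enters: if $\stab(C_i)$ had order $2p$, the extra involution could identify two of the proposed curves $C_{ij}$ with each other (violating (ii)) while still being compatible with all the cyclotomic constraints, so one genuinely needs to exclude it; with $\stab(C_i)=\langle\pi\rangle$ or $|\stab(C_i)|\geq 3p$ divisible by an odd prime or by $4$, the arithmetic relations forced by Lemmas~\ref{lem:2-2-cycles} and~\ref{lem:i-j-k-l-identity} (e.g.\ $i'+j'\equiv k'+l'\bmod p$) become incompatible with the structure of the metacyclic stabilizer, so no such identification can occur and $\{C_{ij}\}$ is honestly indexed.

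\textbf{Main obstacle.} The hard part is the bookkeeping that guarantees condition (ii) of Lemma~\ref{lem:n-1-not-contained}: one must show that the $p-1$ chosen curves attached to $C_i$ are pairwise distinct \emph{and} that curves attached to different components $C_i,C_j\subset H$ never collide. Each potential collision $\sigma C_i = \tau C_j$ with $\sigma\ne\tau$ translates into the statement that $\tau^{-1}\sigma\in S_p$ carries $C_i$ to $C_j$ (or into $\stab(C_i)$ when $i=j$), and $\tau^{-1}\sigma$ is a product of at most two transpositions, hence a $2$-cycle, $3$-cycle, or $2$-$2$-cycle; the lemmas of Section~\ref{sec:comb-lemmas} say such an element of $\stab(C_i)$ — or more precisely the hyperplane membership it would entail — forces the normal vector to have repeated entries in a pattern we have already excluded, \emph{provided} $\stab(C_i)$ is not dihedral of order $2p$. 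Organizing this into a clean case division on $|\stab(C_i)|$ (equivalently on $r$, via $|\stab(C_i)| = p!/r = p\deg(C_i)$), and checking that the exceptional small primes $p=3,5$ (already done in Corollary~\ref{cor:p=3,5}) and $p=7$ don't need separate treatment here, is where essentially all the work lies; the dihedral case, deliberately set aside, is then the subject of the following section.
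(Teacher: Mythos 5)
Your overall strategy matches the paper's: apply Lemma~\ref{lem:n-1-not-contained} by producing, for each $C_i\subset H$, a collection of $p-1$ distinct translates $\tau_j C_i$ by transpositions, then invoke the Section~\ref{sec:comb-lemmas} lemmas to rule out collisions. But there are two genuine gaps in the plan as written.

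First, your claim that a subgroup of $S_p$ containing a $p$-cycle ``lies inside the normalizer of $\langle\pi\rangle$'' is false in general ($A_p$, $S_p$, $\mathrm{PSL}_2(p)$, etc.\ are counterexamples). Burnside's theorem only says a transitive subgroup of $S_p$ is doubly transitive or lies in $N_{S_p}(\langle\pi\rangle)$. To exclude the doubly transitive case one needs the hyperplane: by Theorem~\ref{thm:doubly-trans}, a doubly transitive $G=\stab(C_0)$ would force $\Span(Gv)=\RR^p$ (the trivial rep is visible in $\Span(\langle\pi\rangle v)$ because $\sum_i v_i\neq 0$, and the remainder is the standard rep), contradicting $\Span(Gv)\subset\Span(C_0)\subset H$. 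This step (the content of Proposition~\ref{prop:stabilizer-special-form}) is missing from your plan.

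Second, the precise place where the hypothesis $\stab(C_1)\not\simeq D_{2p}$ enters is not identified. Your phrase about cyclotomic constraints ``becoming incompatible with the metacyclic stabilizer'' is not the actual mechanism. The mechanism is dimensional: to rule out the 2-2-cycle collisions $\tau_k\tau_l\in\stab(C_i)$ you want to apply Lemma~\ref{lem:2-2-cycles}, which requires $\dim\Span(Gv)>3$. Since $\Span(Gv)$ contains the trivial representation plus at least one non-trivial irreducible, and by Lemma~\ref{lem:dim-at-least-5} every non-trivial irreducible $G$-subrep of $\CC^p$ has dimension $|G/\langle\pi\rangle|=\ord(\sigma)$, one gets $\dim\Span(Gv)\geq\ord(\sigma)+1$. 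So $\ord(\sigma)\geq 3$ is exactly what is needed, and that is exactly ``not $D_{2p}$'' (after disposing of $\ord(\sigma)=1$: there $\deg C_1=1$, so $C_1$ is a line, yet $\langle\pi\rangle v\subset C_1$ has linear span of dimension $\geq3$, contradiction). Finally, the paper does not ``fall back'' on Corollaries~\ref{cor:distinct-coord} and~\ref{cor:product-max} inside this proof; it simply observes that since the normal vector is non-constant, there exist $p-1$ distinct transpositions $\tau$ with $\tau H\neq H$, and those supply the $C_{ij}$ directly.
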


Given a subgroup $G'$ of $G$, we let $N_G(G')$ denote the normalizer of $G'$ in $G$. We recall the following two theorems, which we use to obtain a structure result for $\stab(C_1)$.

\begin{theorem}[{Burnside, \cite{burnsides-thm}}]
\label{thm:burnside}
For $p$ prime, a transitive subgroup of $S_p$ is either doubly transitive or contains a normal Sylow $p$-subgroup.
\end{theorem}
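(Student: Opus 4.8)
The plan is to prove the equivalent dichotomy: a transitive subgroup $G\le S_p$ with no normal Sylow $p$-subgroup must be doubly transitive. Since $p^2\nmid p!$, a Sylow $p$-subgroup $P\le G$ has order $p$ and is generated by a $p$-cycle $\sigma$, hence acts regularly on $\Omega=\{1,\dots,p\}$; let $n_p$ denote the number of Sylow $p$-subgroups, so the standing hypothesis is $n_p>1$, i.e.\ $n_p\ge p+1$. I begin with two reductions. First, the centralizer of a $p$-cycle in $S_p$ is $\langle\sigma\rangle$, so $C_G(P)=P$ and $N_G(P)/P$ embeds in $\aut(\ZZ/p)\cong\ZZ/(p-1)$; write $e:=|N_G(P)/P|$, a divisor of $p-1$. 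Second, if $e=1$ then $N_G(P)=C_G(P)$, so by Burnside's normal $p$-complement theorem $G=K\rtimes P$ with $K\trianglelefteq G$ and $p\nmid|K|$; then $K$ is not transitive, $P$ permutes the $K$-orbits, and since $|P|=p$ this forces $K=1$, whence $G=P$ has a normal Sylow $p$-subgroup, contradicting $n_p>1$. So we may assume $e>1$.

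The heart of the argument is character-theoretic, in the style of Burnside's original proof. Let $\pi=\mathbf 1_G+\theta$ be the permutation character of $G$ on $\Omega$, so $\theta(1)=p-1$, $\theta$ is $\ZZ$-valued, and $G$ is doubly transitive precisely when $\theta$ is irreducible. Because $P$ acts regularly, $\pi|_P$ is the regular character of $P$, so $\theta|_P$ is the sum of the $p-1$ nontrivial linear characters of $P$, each with multiplicity one. For any irreducible constituent $\chi$ of $\theta$, the restriction $\chi|_P$ therefore contains no trivial character, and---since $\chi$ is a class function on $G$ and $N_G(P)$ normalizes $P$---the multiset of constituents of $\chi|_P$ is stable under the conjugation action of $N_G(P)$ on $\widehat P$. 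As $\theta|_P$ is multiplicity free, it follows that $\theta$ is multiplicity free, its distinct constituents correspond to a partition of $\widehat P\setminus\{\mathbf 1\}$ into $N_G(P)$-stable blocks, and $\chi(1)$ is the size of the block attached to $\chi$. Identifying $\widehat P\setminus\{\mathbf 1\}$ with $(\ZZ/p)^\times$, the group $N_G(P)/P$ acts as a subgroup of order $e$ of the regular cyclic group $(\ZZ/p)^\times$, so all its orbits have size $e$; hence every block, and in particular $\chi(1)$, is a multiple of $e$. Finally, since $\theta$ is $\QQ$-valued, $\mathrm{Gal}(\QQ(\zeta_p)/\QQ)\cong(\ZZ/p)^\times$ permutes the constituents of $\theta$ compatibly with its (transitive) action on $\widehat P\setminus\{\mathbf 1\}$, which forces all the blocks to have the same size $d$; thus every constituent has degree $d$, with $dk=p-1$ where $k$ is the number of constituents.

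Now suppose, for contradiction, that $G$ is not doubly transitive, i.e.\ $k\ge 2$; then $d=(p-1)/k\le(p-1)/2$, while $e\mid d$ gives $d\ge e$. Here I would run the Burnside counting step: evaluating $1=\langle\chi,\chi\rangle_G$ as a sum of $|\chi(g)|^2$ over conjugacy classes, split into $\{1\}$, the classes of $p$-cycles, and the $p$-regular classes, and using that $\chi(\sigma^j)=\sum_{i\in jS}\zeta_p^{\,i}$ for the size-$d$ coset-union $S\subseteq(\ZZ/p)^\times$ attached to $\chi$, a short computation gives $\sum_{j\in(\ZZ/p)^\times}|\chi(\sigma^j)|^2=d(p-d)$. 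Hence the $p$-cycles contribute $n_p\,d(p-d)$ to $\sum_g|\chi(g)|^2=|G|=p\,e\,n_p$, and discarding the nonnegative $p$-regular part yields $p\,e\,n_p\ge d^2+n_p\,d(p-d)$, i.e.\ $n_p\bigl(pe-d(p-d)\bigr)\ge d^2>0$, so $d^2>p(d-e)$; combined with $d\le(p-1)/2$ and $e\mid d$ this forces $d=e$. Consequently each block $S$ is a single $E$-coset (where $E\le(\ZZ/p)^\times$ has order $e$), so $\theta$ is exactly the sum of the $(p-1)/e$ Gaussian-period characters, and---identifying $\Omega$ with $\ZZ/p$ via $\sigma\leftrightarrow{+}1$, under which $\mathrm{End}_G(\CC[\Omega])$ consists of circulant matrices---the centralizer algebra $\mathrm{End}_G(\CC[\Omega])$ is exactly the cyclotomic Schur ring $V(\ZZ/p,E)$. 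But a transitive permutation group of degree $p$ whose orbital algebra is the cyclotomic Schur ring $V(\ZZ/p,E)$ is contained in $\ZZ/p\rtimes E\le\mathrm{AGL}(1,p)$, hence has a normal Sylow $p$-subgroup---contradicting $n_p>1$. Therefore $k=1$, $\theta$ is irreducible, and $G$ is doubly transitive.

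The one step above that is not formal is the last implication---that a transitive group of prime degree with cyclotomic orbital algebra must be affine. This is the substantive content of Schur's structure theorem for Schur rings over a group of prime order, and I would prove it via Schur's \emph{multiplier} lemma: if $T$ is a basic set of a Schur ring over $\ZZ/p$ and $m\in(\ZZ/p)^\times$, then $mT$ is again a basic set. The proof of the multiplier lemma is the genuine obstacle; it rests on a character-sum identity over $\ZZ/p$ that uses the convolution and the Hadamard (entrywise) product of the Schur ring simultaneously, together with the congruence modulo $p$ satisfied by $p$-th powers in $\ZZ[\ZZ/p]$. Granting the multiplier lemma, the basic sets of $V(\ZZ/p,E)$ are permuted by $(\ZZ/p)^\times$, and a short closure argument identifies $\mathrm{AGL}(1,p)$ as the full automorphism group of the cyclotomic scheme, yielding the containment $G\le\ZZ/p\rtimes E$ and completing the dichotomy.
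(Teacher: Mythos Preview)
The paper does not prove this statement; it is Burnside's classical theorem, quoted with a citation and used as a black box in the proof of Proposition~\ref{prop:stabilizer-special-form}. So there is no ``paper's own proof'' to compare against---your write-up is an independent attempt at a well-known result.

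Your outline follows the classical character-theoretic route (Burnside, as streamlined by Schur and Wielandt), and the reduction to $d=e$ is carried out correctly: the multiplicity-freeness of $\theta$ via $\theta|_P$, the $N_G(P)$-stability and Galois-equivariance of the blocks $S_i$, the computation $\sum_{j\in(\ZZ/p)^\times}|\chi(\sigma^j)|^2=d(p-d)$, and the inequality $d^2>p(d-e)$ forcing $d=e$ are all sound. Your identification of $\mathrm{End}_G(\CC[\Omega])$ with $V(\ZZ/p,E)$ once $d=e$ is also correct, since both are the algebra of operators that are scalar on each of the common isotypic components.

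The one place the argument wobbles is the final implication. What you need is that $\Aut(\mathrm{Cyc}(p,E))=\ZZ/p\rtimes E$ for proper $E\lneq(\ZZ/p)^\times$. This is \emph{not} the content of Schur's classification of S-rings over $\ZZ/p$ (which asserts every such S-ring is cyclotomic---a fact you have already established directly, bypassing the classification). The multiplier lemma, as you state it, is the engine of that classification; applied to an S-ring already known to be cyclotomic it is a tautology, and your ``short closure argument'' is doing all the work without being specified. What Schur and Wielandt actually prove at this stage is a statement about the \emph{action} of $G_0$, not merely its orbits: every element of $G_0$ acts on $\ZZ/p$ as multiplication by a scalar (see, e.g., Wielandt, \emph{Finite Permutation Groups}, \S25). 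The ingredients you name are the right ones, but the logical role you assign to the multiplier lemma should be sharpened.
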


\begin{theorem}[{\cite[Exercise 2.6]{serre-rept-thy}}]
\label{thm:doubly-trans} 
If $G$ is a doubly transitive subgroup of $S_n$, then the permutation representation $\RR^n$ is the direct sum of two irreducible $G$-representations: the trivial representation and the standard representation of $S_n$.
\end{theorem}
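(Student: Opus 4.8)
\emph{Proof proposal.} The plan is to run the classical orbit-counting argument with the permutation character. Write $X=\{1,\dots,n\}$ for the $G$-set, and let $\chi$ denote the character of the permutation representation $\RR^n$, so that $\chi(g)$ is the number of fixed points of $g$ acting on $X$. Since $\chi$ is real-valued, the orbit-counting (Cauchy--Frobenius) lemma applied to the diagonal $G$-action on $X^k$ gives $\frac{1}{|G|}\sum_{g\in G}\chi(g)^k=\#\{G\text{-orbits on }X^k\}$. Taking $k=1$, transitivity of $G$ on $X$ yields $\langle\chi,\mathbf{1}\rangle=1$. Taking $k=2$, double transitivity means precisely that $G$ acts transitively on the set of ordered pairs of distinct elements of $X$, so $G$ has exactly two orbits on $X\times X$ (the diagonal and its complement); hence $\langle\chi,\chi\rangle=\frac{1}{|G|}\sum_{g\in G}\chi(g)^2=2$.

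Next I would exhibit the decomposition. Because $G$ acts on $\RR^n$ through permutation matrices, which are orthogonal, the line $L=\RR\cdot(1,\dots,1)$ and its orthogonal complement $W=\{x\in\RR^n:\sum_i x_i=0\}$ are both $G$-stable, giving $\RR^n=L\oplus W$ with $L$ the trivial representation and $W$ the standard representation of $S_n$. Writing $\chi=\mathbf{1}+\chi_W$, the identity $\langle\chi,\mathbf{1}\rangle=1$ forces $\langle\chi_W,\mathbf{1}\rangle=0$, and then
\[
2=\langle\chi,\chi\rangle=\langle\mathbf{1},\mathbf{1}\rangle+2\langle\mathbf{1},\chi_W\rangle+\langle\chi_W,\chi_W\rangle=1+\langle\chi_W,\chi_W\rangle,
\]
so $\langle\chi_W,\chi_W\rangle=1$.

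Finally I would deduce that $W$ is $G$-irreducible. If instead $W=W_1\oplus W_2$ with $W_1,W_2$ nonzero $G$-subrepresentations, then, using that for real representations $\langle\chi_U,\chi_{U'}\rangle=\dim_\RR\mathrm{Hom}_G(U,U')$, we get $\langle\chi_{W_i},\chi_{W_i}\rangle=\dim_\RR\mathrm{End}_G(W_i)\geq1$ for $i=1,2$ and $\langle\chi_{W_1},\chi_{W_2}\rangle\geq0$, whence $\langle\chi_W,\chi_W\rangle\geq2$, a contradiction. Therefore $W$ is irreducible, and $\RR^n=L\oplus W$ is the asserted decomposition into the trivial and the standard representation.

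The argument is entirely routine; the only mild subtlety is that we work over $\RR$ rather than $\CC$, so in the last step one must invoke $\langle\chi_W,\chi_W\rangle=\dim_\RR\mathrm{End}_G(W)$ (equivalently, pass to $W\otimes_\RR\CC$, where $\langle\chi_W,\chi_W\rangle=1$ immediately yields complex, hence real, irreducibility). I do not anticipate any serious obstacle.
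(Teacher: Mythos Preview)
The paper does not supply a proof of this theorem; it is stated with a citation to Serre's book (Exercise 2.6) and used as a black box in the proof of Proposition~\ref{prop:stabilizer-special-form}. Your argument is correct and is exactly the standard one intended by that exercise: compute $\langle\chi,\mathbf{1}\rangle=1$ and $\langle\chi,\chi\rangle=2$ via orbit counting on $X$ and $X\times X$, split off the trivial line, and conclude that the complement has self-inner-product $1$. Your remark about the $\RR$ versus $\CC$ subtlety is apt and handled correctly.
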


\begin{proposition}
\label{prop:stabilizer-special-form}
Let $p$ be prime, $v\in\RR^p$ have distinct coordinates that do not sum to 0. Suppose $v\in C_0\subset H$ where $H$ is a hyperplane of $\RR^p$ and $C_0$ is an irreducible component of the elementary symmetric curve associated to $v$. Then
\[
\stab(C_0)=\<\pi,\sigma\>\subset N_{S_p}(\langle\pi\rangle)
\]
where $\pi$ is a $p$-cycle, and $\sigma$ is a power of some $(p-1)$-cycle.
\end{proposition}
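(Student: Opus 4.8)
The plan is to combine Burnside's Theorem (Theorem~\ref{thm:burnside}) with the representation-theoretic constraint coming from the fact that $C_0\subset H$ lies in a hyperplane. First I would observe that $\stab(C_0)$ acts transitively on the irreducible components of $C$ passing through (conjugates of) $v$... more precisely, by Lemma~\ref{lem:irred-comp-same-degree} the $S_p$-action on $\{C_1,\dots,C_r\}$ is transitive, so $\stab(C_0)$ is a subgroup of $S_p$ of index $r$, and by Lemma~\ref{lem:stab-contain-p-cycle} it contains a $p$-cycle $\pi$. Since $\langle\pi\rangle$ is a Sylow $p$-subgroup of $\stab(C_0)$ (its order is $p\deg(C_0) = p!/r$, and $p^2\nmid p!$), I would then ask whether $\stab(C_0)$ is transitive as a subgroup of $S_p$: it is, because it contains the $p$-cycle $\pi$. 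Hence Burnside's Theorem applies: $\stab(C_0)$ is either doubly transitive or has a normal Sylow $p$-subgroup.

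The key step is to rule out the doubly transitive case using the hypothesis $C_0\subset H$ and the fact that $v$ has distinct coordinates not summing to $0$. If $\stab(C_0)$ were doubly transitive, then by Theorem~\ref{thm:doubly-trans} the permutation representation $\RR^p$ decomposes as the trivial representation plus the standard representation of $S_p$, both irreducible over $\RR$ as $\stab(C_0)$-representations. Now $\Span_\RR(\stab(C_0)\cdot v)$ is a $\stab(C_0)$-subrepresentation of $\RR^p$; since $v$ has distinct coordinates it is not fixed by $\stab(C_0)$ (as $\stab(C_0)$ acts transitively and hence moves $v$), so this span must contain the standard representation, and since $\sum v_i\neq 0$ it is not contained in $\{\sum x_i = 0\}$, so the span is all of $\RR^p$. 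But $C_0\subset H$ forces $\Span_\RR(C_0)\subset H$, and $\stab(C_0)\cdot v\subset C_0$ (each element of $S_p v$ lying on $C_0$ is permuted among such elements by $\stab(C_0)$), so $\Span_\RR(\stab(C_0)\cdot v)\subset H\subsetneq\RR^p$, a contradiction. Therefore $\stab(C_0)$ has a normal Sylow $p$-subgroup, which must be $\langle\pi\rangle$; in particular $\stab(C_0)\subseteq N_{S_p}(\langle\pi\rangle)$.

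To finish, I would use the standard description $N_{S_p}(\langle\pi\rangle)\cong \langle\pi\rangle\rtimes (\ZZ/p)^\times$, a group of order $p(p-1)$ whose action on the $p$-element set (identified with $\ZZ/p$ via the orbit of $\pi$) is the affine group $x\mapsto ax+b$. A complement to $\langle\pi\rangle$ in $N_{S_p}(\langle\pi\rangle)$ is cyclic of order $p-1$, generated by a permutation that fixes one point and permutes the remaining $p-1$ points in a single $(p-1)$-cycle (namely multiplication by a primitive root mod $p$). Since $\stab(C_0)\subseteq N_{S_p}(\langle\pi\rangle)$ and contains $\langle\pi\rangle$ as a normal Sylow $p$-subgroup, $\stab(C_0) = \langle\pi\rangle\rtimes K$ where $K$ is a subgroup of the cyclic group $(\ZZ/p)^\times$, hence $K$ is generated by some power $\sigma$ of the $(p-1)$-cycle above. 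This gives $\stab(C_0)=\langle\pi,\sigma\rangle$ in the required form. The main obstacle is the doubly transitive case: one must correctly argue that $\Span_\RR(\stab(C_0)\cdot v)$ is all of $\RR^p$, which is exactly where the two hypotheses on $v$ (distinct coordinates, nonzero sum) are both used.
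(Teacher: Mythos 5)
Your proposal is correct and follows essentially the same route as the paper's proof: Lemma~\ref{lem:stab-contain-p-cycle} to get the $p$-cycle, Burnside's Theorem to split into the doubly transitive vs.\ normal-Sylow cases, Theorem~\ref{thm:doubly-trans} together with the two hypotheses on $v$ (distinct coordinates and $\sum v_i\neq0$) to force $\Span(\stab(C_0)v)=\RR^p$ and contradict $C_0\subset H$ in the doubly transitive case, and then the standard description of $N_{S_p}(\langle\pi\rangle)$ as $\langle\pi\rangle\rtimes(\ZZ/p)^\times$ to extract $\sigma$. The only cosmetic difference is the order in which you deploy the two hypotheses on $v$ (you first rule out the span lying in the trivial summand, then rule out the standard summand, whereas the paper does the reverse), but the substance is identical.
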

\begin{proof}
To ease notation, let $G=\stab(C_0)$. By Lemma \ref{lem:stab-contain-p-cycle}, $G$ contains a $p$-cycle $\pi$ and hence is a transitive subgroup of $S_p$. Note that $\<\pi\>$ is a Sylow $p$-subgroup of $G$.

Our first goal is to show $G\subset N_{S_p}(\langle\pi\rangle)$. If this is not the case, then $\langle\pi\rangle$ is not normal in $G$, and so $G$ is doubly transitive by Theorem \ref{thm:burnside}. Notice that 
\[
(1,\dots,1)=\frac{1}{\sum_i v_i}\sum_{i=0}^{p-1} \pi^iv\in \Span(\langle\pi\rangle v)\subset \Span(Gv)
\]
so $\Span(Gv)$ contains the trivial representation. Since $v\in\Span(Gv)$ and $v$ has distinct coordinates, we see $\Span(Gv)$ cannot equal the trivial representation. It follows then from Theorem \ref{thm:doubly-trans} that $\Span(Gv)=\RR^p$. On the other hand,
\[
\RR^p=\Span(Gv)\subset \Span(GC_0)=\Span(C_0)\subset H
\]
which contradicts the fact that $H$ is a hyperplane. We have therefore proven our claim that $G\subset N_{S_p}(\langle\pi\rangle)$.

Next, one readily checks that $N_{S_p}(\langle\pi\rangle)=\<\pi,\tau\>$ where $\tau$ is a $(p-1)$-cycle such that $\tau^{-1}\pi\tau=\pi^k$ with $k$ a generator for $(\ZZ/p)^*$. In particular, $N_{S_p}(\langle\pi\rangle)\simeq\<\pi\>\rtimes(\ZZ/p)^*$ where $(\ZZ/p)^*$ acts on $\<\pi\>\simeq\ZZ/p$ in the natural way. Since $G$ is a subgroup of $N_{S_p}(\langle\pi\rangle)$ that contains $\pi$, we see $G=\<\pi\>\rtimes Q$, where $Q$ is a subgroup of $(\ZZ/p)^*$. It follows that $G=\<\pi,\sigma\>$ where $\sigma=\tau^i$ for some $i$.

\end{proof}

Given the above structure result for $\stab(C_0)$, we next understand how $\CC^p$ decomposes as a $\stab(C_0)$-representation.

\begin{lemma}
\label{lem:dim-at-least-5}
Let $p$ be prime, $\pi\in S_p$ be a $p$-cycle, and $G$ be a subgroup of $N_{S_p}(\langle\pi\rangle)$. Then every non-trivial complex irreducible $G$-subrepresentation of the permutation representation $\CC^p$ has dimension $|G/\<\pi\>|$.
\end{lemma}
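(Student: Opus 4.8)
The plan is to decompose the permutation representation $\CC^p$ into its $\langle\pi\rangle$-isotypic pieces and then track how the quotient group $Q = G/\langle\pi\rangle \subset (\ZZ/p)^*$ permutes them.

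First I would recall that, over $\CC$, the permutation representation of the cyclic group $\langle\pi\rangle \simeq \ZZ/p$ splits as $\CC^p = \bigoplus_{a=0}^{p-1} L_a$, where $L_a$ is the one-dimensional representation on which $\pi$ acts by $\zeta^a$ (with $\zeta = e^{2\pi i/p}$); here $L_0$ is the trivial line and $L_1,\dots,L_{p-1}$ are the $p-1$ distinct nontrivial characters, each spanned by $\sigma w$ for appropriate $\sigma$, as in Corollary~\ref{cor:inv-subspace}. Writing $G = \langle\pi\rangle \rtimes Q$ as in Proposition~\ref{prop:stabilizer-special-form}, a generator $\sigma = \tau^i$ of $Q$ satisfies $\sigma^{-1}\pi\sigma = \pi^{k}$ for some $k \in (\ZZ/p)^*$, so $\sigma$ sends the $\pi$-eigenline $L_a$ to $L_{k^{-1}a}$ (or $L_{ka}$, depending on convention — the precise exponent is immaterial). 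Thus $Q$ acts on the index set $\{1,\dots,p-1\} = (\ZZ/p)^*$ by multiplication by a subgroup, hence freely and with all orbits of size $|Q| = |G/\langle\pi\rangle|$, while fixing the index $0$.

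Next I would observe that any $G$-subrepresentation $W \subset \CC^p$ is in particular $\langle\pi\rangle$-stable, so $W = \bigoplus_{a \in A} L_a$ for some subset $A \subseteq \{0,1,\dots,p-1\}$; and $G$-stability forces $A$ to be a union of $Q$-orbits. If $W$ is nontrivial and irreducible, then $A$ cannot contain $0$ (else $L_0 \subset W$ would be a proper nonzero subrepresentation, unless $W = L_0$ is trivial), so $A$ is a union of orbits inside $(\ZZ/p)^*$, each of size $|G/\langle\pi\rangle|$. To finish, I would show that $W_A := \bigoplus_{a\in A} L_a$ is already irreducible as soon as $A$ is a single $Q$-orbit: indeed, picking any $a_0 \in A$ and any nonzero vector $x \in L_{a_0}$, the $G$-translates of $x$ already hit every line $L_a$, $a\in A$ (since $Q$ acts transitively on the orbit), so the $G$-span of $x$ is all of $W_A$; since this holds for every nonzero $x$ in every line, $W_A$ has no proper nonzero $G$-subrepresentation. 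Hence the nontrivial irreducible $G$-subrepresentations of $\CC^p$ are exactly the $W_A$ with $A$ a single $Q$-orbit, each of dimension $|Q| = |G/\langle\pi\rangle|$, which is the claim.

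The only genuinely delicate point is the bookkeeping of how $\sigma$ permutes the eigenlines — that it really does act on the nonzero characters by multiplication in $(\ZZ/p)^*$ via the integer $k$ appearing in $\sigma^{-1}\pi\sigma = \pi^k$ — and that this action is free with orbits of uniform size $|Q|$; this is a short computation using $\sigma(L_a)$ being the $\zeta^{ka}$-eigenspace of $\pi$. Everything else is the standard fact that subrepresentations of a multiplicity-free representation correspond to subsets of the isotypic index set, together with the transitivity argument above. I do not expect any serious obstacle.
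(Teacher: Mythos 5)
Your proposal is correct and takes essentially the same route as the paper: decompose $\CC^p$ into $\pi$-eigenlines, observe that $\sigma$ permutes them by multiplication by $k$ in $(\ZZ/p)^*$ (so $Q$ acts freely on the nonzero characters), and read off that the non-trivial $G$-irreducibles are exactly the orbit-sums, each of size $|G/\langle\pi\rangle|$. You flesh out the irreducibility verification a bit more than the paper does, but the underlying argument is identical.
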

\begin{proof} 
As in the proof of Proposition \ref{prop:stabilizer-special-form}, we know $G=\langle\pi,\sigma\rangle$ where $\sigma^{-1}\pi\sigma=\pi^k$. Fix a primitive $p$-th root of unity $\zeta$. Decomposing $\CC^p$ into irreducible subrepresentations of $\<\pi\>\simeq\ZZ/p$, we have $\CC^p=\bigoplus_{i\in\ZZ/p} V_i$ where 
$V_i=\Span(\omega_i)$ and $\pi\omega_i=\zeta^i\omega_i$. We find $\pi\sigma\omega_i=\sigma\pi^k\omega_i=\zeta^{ik}\sigma\omega_i$ and hence $\sigma V_i=V_{ik}$. So, the non-trivial irreducible $G$-subrepresentations of $\CC^p$ are given by $\Span(GV_i)=\bigoplus_j V_{ik^j}$, where the sum runs over $0\leq j<\ord(k)$ and $\ord(k)$ is the order of $k$ in $(\ZZ/p)^*$, i.e.~the order of $G/\<\pi\>$.
\end{proof}

\begin{proof}[{Proof of Theorem \ref{thm:conj-for-primes-special}}]
By Corollary \ref{cor:p=3,5}, we may assume $p\geq 7$. Let $C$ be elementary symmetric curve associated to $v$ and let $C_1,\dots,C_r$ be its irreducible components. By Lemma \ref{lem:transitive-action-components}, we may assume that $H$ contains an irreducible component of $C$; without loss of generality, $v\in C_1\subset H$. Letting $G=\stab(C_1)$, we know from Proposition \ref{prop:stabilizer-special-form} that $G=\<\pi,\sigma\>$ where $\pi$ is a $p$-cycle and $\sigma^{-1}\pi\sigma=\pi^k$. Since $G\not\cong D_{2p}$, the order of $\sigma$, $\ord(\sigma)$, cannot equal $2$.

Next, note that $\Span(\langle\pi\rangle v)$ contains the trivial representation, as $(1,\dots,1)=\frac{1}{\sum_i v_i}\sum_{i=0}^{p-1} \pi^iv$. On the other hand, $\Span(\langle\pi\rangle v)$ cannot equal the trivial representation since it contains $v$, which has distinct coordinates. So, $\Span(Gv)$ contains both the trivial and a non-trivial $G$-subrepresentation of $\RR^p$.

If $\sigma=1$, then $G=\<\pi\>$ and since $r=p!/|G|=(p-1)!$, we see from Lemma \ref{lem:irred-comp-same-degree} that $\deg(C_1)=1$, i.e.~the curve $C_1$ is a line. 
Since the non-trivial irreducible $\<\pi\>$-subrepresentations of $\RR^p$ are all $2$-dimensional, it follows that $\dim\Span(\langle\pi\rangle v)\geq3$. In particular, the line $C_1$ cannot contain $\<\pi\>v$.

So, we may assume $\ord(\sigma)\geq3$. Let $H=(a_1,\dots,a_p)^\perp$. Again by Lemma \ref{lem:transpositions-3cycles}, $(ij)H=H$ if and only if $a_i=a_j$. Since the coordinates of $v$ do not sum to $0$, not all of the $a_i$ are equal. From this, it is straightforward to check that there are at least $p-1$ distinct transpositions $\tau_1,\dots,\tau_{p-1}$ such that $\tau_k H\neq H$. 
For each $C_i$ contained in $H$, let $C_{ij}=\tau_jC_i$. We will check that the conditions in Lemma \ref{lem:n-1-not-contained} are satisfied, and conclude $|H\cap S_pv|\leq (p-1)!$.

It follows directly from Lemma \ref{lem:transpositions-3cycles} that $H\cap C_{ij}\cap S_pv=\varnothing$. Next, suppose $H$ contains $C_i$ and $C_j$, and that we have $C_{ik}= C_{jl}$. If $k\neq l$, then $H$ contains both $C_j$ and $C_i=\tau_k\tau_lC_j$. Now, $\tau_k\tau_l$ cannot be a 3-cycle as this would contradict Lemma \ref{lem:transpositions-3cycles}. So, $\tau_k\tau_l$ must be a 2-2-cycle, 
in which case we note that $\Span(Gv)$ contains the trivial and a non-trivial irreducible $G$-subrepresentation of $\RR^p$. So $\dim\Span(Gv)\geq \ord(\sigma)+1>3$ by Lemma \ref{lem:dim-at-least-5}, which gives a contradiction by Lemma \ref{lem:2-2-cycles}. It follows that $k=l$, so $C_i=C_j$ and $i=j$.
\end{proof}

\section{Completing the proof of Theorem \ref{thm:conj-for-primes}}
\label{sec:completing-pf}

To finish the proof of Theorem \ref{thm:conj-for-primes}, we must now handle the case not covered by Theorem \ref{thm:conj-for-primes-special} and Corollary \ref{cor:p=3,5}, namely when $p\geq 7$ and the irreducible components of $C$ have stabilizers isomorphic to $D_{2p}$, the dihedral group with $2p$ elements. Note that by Lemma \ref{lem:irred-comp-same-degree}, this implies the irreducible components of $C$ have degree 2.


Let $H=(b_1,\dots,b_p)^\perp$ be any hyperplane. We fix the following notation. Let $\{1,\dots,p\}=\lambda_1\cup\dots\cup\lambda_K$ be the partition defined by the domains on which the function $j\mapsto b_j$ are constant. In other words, we have distinct $a_1,\dots,a_K\in\RR$ such that $b_j=a_J$ if and only if $j\in\lambda_J$. Let
\[
m:=\min_J |\lambda_J| \quad\textrm{and}\quad |\lambda_M|=m
\]
for some fixed choice of $M$.
By Corollaries \ref{cor:distinct-coord} and \ref{cor:product-max}, we can assume
\[
2\leq m<\sqrt{p-1}.
\]
If $a_M\neq 0$, we may scale to assume $a_M=1$.

We prove Theorem \ref{thm:conj-for-primes} by studying properties of a graph $\Gamma$ which we now define. Throughout the rest of Section \ref{sec:completing-pf}, we fix two distinct elements $i,k\in\lambda_M$ and let
 \[
 T:=\{(ij):j\notin \lambda_M\}\cup\{(kj):j\notin \lambda_M\}.
 \]
Let $\Gamma:=\Gamma_{ik}$ be the graph whose vertices and edges are defined as follows. Let $C$ be the elementary symmetric curve associated to $v$. The vertices of $\Gamma$ are the irreducible components $C_a$ of $C$ for which $C_a\subset H$. Let $|\Gamma|$ denote 
the number of vertices in $\Gamma$. If $C_1,C_2\in\Gamma$ are two vertices, we write $C_1\sim C_2$ when $C_1$ and $C_2$ are connected by an edge. The edges of $\Gamma$ are defined by
\[
C_1\sim C_2 \quad\Longleftrightarrow\quad (ij)(kl)C_1=C_2 \textrm{\ for\ distinct\ } j,l\notin \lambda_M;
\]
here $i,k$ are the elements that we have fixed above. 

We observe that for each $C_0\in\Gamma$, if $\sigma,\tau\in T$ and $\sigma C_0=\tau C_0$, then $\sigma^{-1}\tau\in \stab(C_0)\simeq D_{2p}$. Since $\sigma^{-1}\tau$ is a product of two transpositions, it is not a $p$-cycle nor is it a product of $(p-1)/2$ disjoint 2-cycles, so $\sigma^{-1}\tau=1$. Thus, 
we find
\[
|\{\sigma C_0:\sigma\in T\}|=|T|=2(p-m).
\]

For the rest of the section, we let $$w=(\zeta,\dots,\zeta^p),$$ where $\zeta=e^{2\pi i/p}$.

\begin{lemma}
\label{l:unique-edge-labeling-by-j-l}
If $C_1,C_2\in\Gamma$ and $C_1\sim C_2$, then $(ij)(kl)C_1=C_2$ for a unique pair $(j,l)$.
\end{lemma}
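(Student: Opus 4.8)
The plan is to show that if $(ij)(kl)C_1 = C_2$ and also $(ij')(kl')C_1 = C_2$ with $(j,l)$ and $(j',l')$ two pairs of distinct elements outside $\lambda_M$, then the two pairs must coincide. Composing, we get $\pi := (ij')(kl')(ij)(kl) \in \stab(C_1) \simeq D_{2p}$. First I would enumerate the possible cycle types of $\pi = (ij')(kl')(ij)(kl)$ as a function of how many of $j,l,j',l'$ coincide. Since $j,l$ are distinct and $j',l'$ are distinct and all of them lie outside $\lambda_M$ while $i,k \in \lambda_M$, the only coincidences among the four indices $j,l,j',l'$ are $j=j'$, $j=l'$, $l=j'$, or $l=l'$ (and combinations thereof). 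A short case analysis shows that $\pi$ is a product of at most four transpositions supported on at most eight points, so in particular $\pi$ is never a $p$-cycle (as $p \geq 7 > $ the number of points it moves, unless it is trivial) and never a product of $(p-1)/2$ disjoint transpositions for the same reason. Since every non-identity element of $D_{2p} \subset S_p$ is either a $p$-cycle or a product of $(p-1)/2$ disjoint transpositions (a reflection), we conclude $\pi = 1$.

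The remaining work is to extract from $\pi = 1$, i.e. $(ij')(kl') = (ij)(kl)$ (after inverting, using that products of disjoint transpositions are involutions — note $(ij)(kl)$ and $(ij')(kl')$ are each products of two \emph{disjoint} transpositions since $i,k \in \lambda_M$ while $j,l,j',l' \notin \lambda_M$), the conclusion $j=j'$ and $l=l'$. Two products of disjoint transpositions are equal iff they have the same set of $2$-element blocks, so $\{\{i,j\},\{k,l\}\} = \{\{i,j'\},\{k,l'\}\}$. Since $i \neq k$ and $j,l,j',l'$ are all distinct from both $i$ and $k$, the block containing $i$ on each side is $\{i,j\}$ and $\{i,j'\}$ respectively, forcing $j = j'$; likewise the block containing $k$ forces $l = l'$. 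This gives uniqueness.

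I expect the main (though modest) obstacle to be bookkeeping in the case analysis of step one: one must be careful that the hypothesis $i,k\in\lambda_M$ and $j,l,j',l'\notin\lambda_M$ genuinely rules out, e.g., $i$ coinciding with one of $j',l'$, so that $(ij)(kl)$ is honestly a $2{+}2$-cycle and not a $3$-cycle or a transposition. Once that is pinned down, the fact that $D_{2p}$ contains no element that is a product of (between one and four) transpositions other than the identity is immediate from $p \geq 7$, since such an element moves at most $8 < p$ points and hence cannot be a $p$-cycle, and moves fewer than $p-1$ points and hence cannot be a reflection. The whole argument is essentially the same observation already used in defining $\Gamma$ (that $\sigma^{-1}\tau = 1$ for $\sigma,\tau \in T$), now applied to the composite of two $2{+}2$-cycles rather than two transpositions.
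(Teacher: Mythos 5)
Your overall strategy --- bypass Lemma \ref{lem:i-j-k-l-identity} entirely and deduce $(j,l)=(j',l')$ purely from the group-theoretic fact that $\pi := (ij')(kl')(ij)(kl) \in D_{2p}$, together with the block structure of products of disjoint transpositions --- is a genuinely different and arguably leaner route than the paper's. The paper first invokes Lemma \ref{lem:i-j-k-l-identity} to prove $j = j' \Leftrightarrow l = l'$, uses that to conclude $\pi \neq 1$ under the contrapositive hypothesis, and only then rules out the two non-identity cycle types in $D_{2p}$; you run the argument forward, concluding $\pi = 1$ directly and then reading off $j=j'$, $l=l'$ from the unique $2$-element block of each involution containing $i$ (respectively $k$). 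That second step is correct, and the care you take to verify $(ij)(kl)$ and $(ij')(kl')$ really are products of \emph{disjoint} transpositions (using $i,k\in\lambda_M$ versus $j,l,j',l'\notin\lambda_M$) is exactly the right point to check.

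There is, however, a gap in the first step, precisely at $p=7$. You claim $\pi$ cannot be a product of $(p-1)/2$ disjoint transpositions ``for the same reason'' it cannot be a $p$-cycle, namely by bounding its support. But $\pi$ is supported on $\{i,k,j,l,j',l'\}$, a set of size at most six (not eight; $i$ and $k$ each appear twice in the product), while a reflection in $D_{14}\subset S_7$ moves exactly $p-1 = 6$ points. So for $p=7$ the support count does not exclude $\pi$ from being a reflection, and your assertion that $\pi$ ``moves fewer than $p-1$ points'' is not justified. The paper closes this case with a parity observation you should add: $\pi$ is a product of four transpositions, hence even, whereas a product of $(p-1)/2 = 3$ disjoint transpositions is odd. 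With that one fix your argument goes through for all $p\geq 7$.
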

\begin{proof}
Suppose $(ij)(kl)C_1=(ij')(kl')C_1=C_2\subset H$, where $j\neq j'$ or $l\neq l'$. Then 
\[(ij)(kl)(ij')(kl')\in\stab(C_1)\simeq D_{2p}.\]
By Corollary \ref{cor:inv-subspace}, $\Span_\CC C_1$ contains $\sigma w$ for some $\sigma\in S_p$. Then $H$ contains $\sigma w$, $(ij)(kl)\sigma w$, and $(ij')(kl')\sigma w$, so by Lemma \ref{lem:i-j-k-l-identity}, the following two equations hold:
\[\sigma^{-1}(i)+\sigma^{-1}(j)=\sigma^{-1}(k)+\sigma^{-1}(l)\mod p\]
\[\sigma^{-1}(i)+\sigma^{-1}(j')=\sigma^{-1}(k)+\sigma^{-1}(l')\mod p.\]
Subtracting the equations, we find
\[\sigma^{-1}(j')-\sigma^{-1}(j)=\sigma^{-1}(l')-\sigma^{-1}(l)\mod p.\]
This implies that $j=j'$ if and only if $l=l'$, and hence $j\neq j'$. In addition $j'=l'$ implies $j=l \mod p$ and $i=k \mod p$, which is also not true. Recall that neither $i$ nor $j'$ is equal to $k$ or $l$ mod $p$.

Putting these observations together we see that $(ij)(kl)(ij')(kl')$ is not the identity, as it sends $j'$ to $j$. We see that $(ij)(kl)(ij')(kl')$ also does not permute $p\geq 7$ elements. So as an element of $D_{2p}$, it must be a product of $(p-1)/2$ disjoint transpositions, which is only possible when $p=7$ and $(p-1)/2=3$. However, $(ij)(kl)(ij')(kl')$ is an even permutation so this is also not possible when $p=7$. We have thus established our claim.
\end{proof}

Finally, we let
\[
\mathcal{T}=\{\sigma C_0:\sigma\in T,C_0\in\Gamma\}.
\]
Note that $\Gamma\cap\mathcal{T}=\varnothing$ by Lemma \ref{lem:transpositions-3cycles}.

We divide the proof of Theorem \ref{thm:conj-for-primes} into two cases depending on the size of $\mathcal{T}$. The following result easily dispenses with the case where $\mathcal{T}$ is big.

\begin{lemma}
\label{l:bound-size-of-mathcalT}
If $|\mathcal{T}|\geq (p-1)|\Gamma|$, then $|H\cap S_pv|\leq(p-1)!$. 
\end{lemma}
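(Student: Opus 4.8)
The plan is to reduce to the setup of Lemma~\ref{lem:n-1-not-contained}, using the hypothesis $|\mathcal{T}|\geq (p-1)|\Gamma|$ to furnish, for each vertex $C_0\in\Gamma$, the required $p-1$ distinct irreducible components $C_{0,1},\dots,C_{0,p-1}$ not meeting $H\cap S_pv$.

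First I would observe that every element of $\mathcal{T}$ is an irreducible component of $C$ of the form $\sigma C_0$ with $\sigma\in T$ a transposition and $C_0\subset H$; by Lemma~\ref{lem:transpositions-3cycles} (applied as in the proof of Corollary~\ref{cor:distinct-coord}, since each $\sigma\in T$ swaps two coordinates $b_\bullet$ with distinct values $a_M$ and $a_J$) we get $H\cap (\sigma C_0)\cap S_pv=\varnothing$ for every such $\sigma C_0$; this also re-confirms $\Gamma\cap\mathcal{T}=\varnothing$. So condition~(i) of Lemma~\ref{lem:n-1-not-contained} holds automatically for any assignment $C_0\mapsto\{C_{0,j}\}$ drawn from $\mathcal{T}$. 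The content is therefore purely combinatorial: I need to choose, for each $C_0\in\Gamma$, a set $S(C_0)\subseteq\mathcal{T}$ of size exactly $p-1$ with the $S(C_0)$ pairwise disjoint, so that condition~(ii) of Lemma~\ref{lem:n-1-not-contained} — that the chosen components, across all vertices, are all distinct — is met.

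To do this I would set up a bipartite incidence structure between $\Gamma$ and $\mathcal{T}$: join $C_0\in\Gamma$ to $D\in\mathcal{T}$ when $D=\sigma C_0$ for some $\sigma\in T$. We showed just before the lemma that each $C_0\in\Gamma$ has exactly $|T|=2(p-m)$ neighbours in this structure, and since $m\geq 2$ we have $2(p-m)\geq p-1$ (indeed $2(p-m)\geq 2(p-\sqrt{p-1})>p-1$ for $p\geq 7$), so every vertex of $\Gamma$ has at least $p-1$ candidates. The hypothesis $|\mathcal{T}|\geq(p-1)|\Gamma|$ is exactly the counting inequality needed to run a Hall-type / defect argument, or more simply a greedy argument, to extract pairwise disjoint sets $S(C_0)$ of size $p-1$: one processes the vertices of $\Gamma$ one at a time, and since at each stage fewer than $(p-1)|\Gamma|$ elements of $\mathcal{T}$ have been used while each vertex has $2(p-m)\geq p-1$ neighbours, a cleaner route is to note that choosing all of $T\cdot C_0$ for each $C_0$ would give a multiset of size $2(p-m)|\Gamma|$ mapping into $\mathcal{T}$, and the disjointness failures are precisely edges of $\Gamma$ together with the "$\mathcal{T}$-collisions"; I would instead argue directly that one can select $p-1$ of the $2(p-m)$ translates at each vertex so that the total selection is injective into $\mathcal{T}$, which is possible precisely because $|\mathcal{T}|\geq(p-1)|\Gamma|$ and the fibres of the map $T\times\Gamma\to\mathcal{T}$ can be balanced. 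Having produced such $S(C_0)$, conditions~(i) and~(ii) of Lemma~\ref{lem:n-1-not-contained} both hold, and that lemma yields $|H\cap S_pv|\leq(p-1)!$.

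The main obstacle is the combinatorial selection step: turning the crude inequality $|\mathcal{T}|\geq(p-1)|\Gamma|$ into an actual system of pairwise disjoint $(p-1)$-subsets, one per vertex, injecting into $\mathcal{T}$. The subtlety is that a single $D\in\mathcal{T}$ may be a $T$-translate of several different vertices $C_0$ (this is exactly where edges of $\Gamma$ enter), so a naive "assign $T\cdot C_0$ to $C_0$" double-counts; one must argue that the overlaps can be absorbed. I expect the clean way is a deficiency version of Hall's theorem (or equivalently a max-flow argument) on the bipartite graph above: any subset $\mathcal{A}\subseteq\Gamma$ has neighbourhood of size at least $|\mathcal{T}\cap(T\cdot\mathcal{A})|$, and one needs $|N(\mathcal{A})|\geq(p-1)|\mathcal{A}|$; when $\mathcal{A}=\Gamma$ this is the hypothesis, and for proper subsets it should follow from the fact that the $2(p-m)$-regularity forces $|N(\mathcal{A})|\geq 2(p-m)|\mathcal{A}|/(\text{max fibre size})$ — bounding the maximum fibre size (how many vertices share a common $T$-translate) is the crux, and Lemma~\ref{l:unique-edge-labeling-by-j-l} together with the $D_{2p}$-stabilizer structure is what controls it.
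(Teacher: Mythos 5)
Your proposal has a genuine gap, and it stems from reading Lemma~\ref{lem:n-1-not-contained} too literally. You want to assign to each vertex $C_0\in\Gamma$ a private set of $p-1$ components drawn injectively from $\mathcal{T}$, and you correctly identify that this requires a Hall-type matching on the bipartite incidence structure between $\Gamma$ and $\mathcal{T}$. But that Hall condition is not established and, as the hypothesis stands, need not hold. The inequality $|\mathcal{T}|\geq(p-1)|\Gamma|$ is the Hall bound only for $A=\Gamma$; for a proper subset $A\subseteq\Gamma$ the same fibre count that yields $|\mathcal{T}|=|T||\Gamma|-2e$ in the proof of Lemma~\ref{lem:deg-bound-vertices} gives $|N(A)|=2(p-m)|A|-2e(A)$, where $e(A)$ is the number of edges of $\Gamma$ with both endpoints in $A$, and nothing in the hypothesis stops the edges of $\Gamma$ from concentrating inside such an $A$. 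Even the degree bound $d(C)\leq p-m$ forced by Lemma~\ref{lem:i-j-k-l-identity} (since $a_i=a_k=a_M$ rules out the second alternative there, so $j$ determines $l$) only gives $|N(A)|\geq(p-m)|A|$, which falls strictly short of the required $(p-1)|A|$ once $m\geq2$. So the ``combinatorial selection step'' you flag as the main obstacle genuinely is one, and your argument does not close it.

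The good news is that no matching is needed, because conditions (i) and (ii) of Lemma~\ref{lem:n-1-not-contained} are used in that lemma's proof only to guarantee the existence of sufficiently many components, none contained in $H$, all with empty $H\cap S_nv$-intersection; the per-vertex assignment plays no further role once that raw count is in hand. Here the global count is already available without any matching: by Lemma~\ref{lem:transpositions-3cycles} every $D\in\mathcal{T}$ satisfies $H\cap D\cap S_pv=\varnothing$, and $\Gamma\cap\mathcal{T}=\varnothing$, so $\Gamma$ and $\mathcal{T}$ are disjoint families of irreducible components with $|\mathcal{T}|\geq(p-1)|\Gamma|$. Substituting directly into the B\'ezout count --- which is exactly what the paper's proof does --- gives
\begin{equation*}
|H\cap S_pv|\;\leq\;|\Gamma|\,\frac{p!}{r}\;+\;\bigl(r-|\Gamma|-|\mathcal{T}|\bigr)\frac{(p-1)!}{r}\;\leq\;|\Gamma|\,\frac{p!}{r}+\bigl(r-p|\Gamma|\bigr)\frac{(p-1)!}{r}\;=\;(p-1)!,
\end{equation*}
which both sidesteps the Hall gap and shortens the argument considerably.
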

\begin{proof}
If $D$ is an irreducible component of $C$ and $D\subset H$, then $|H\cap D\cap S_pv|=\frac{p!}{r}$. If $D\not\subset H$, then by B\'ezout's Theorem and Lemma \ref{lem:irred-comp-same-degree}, we have $|H\cap D\cap S_pv|\leq\frac{(p-1)!}{r}$. Furthermore, if $D\in \mathcal{T}$, then by Lemma \ref{lem:transpositions-3cycles}, we have $H\cap D\cap S_pv=\varnothing$. Putting these bounds together, and making use of the fact that $\Gamma\cap\mathcal{T}=\varnothing$, we find
\begin{equation*}
\begin{split}
 |H\cap S_pv| &\leq\sum_{D\subset H} \frac{p!}{r}+\sum_{\substack{D\not\subset H\\ D\notin \mathcal{T}}}\frac{(p-1)!}{r}+\sum_{\substack{D\not\subset H\\ D\in\mathcal{T}}} 0\\
&\leq \frac{p!}{r}|\Gamma|+(r-|\Gamma|-(p-1)|\Gamma|)\frac{(p-1)!}r\\
&=(p-1)!.\qedhere
\end{split}
\end{equation*}
\end{proof}

The goal of the rest of Section \ref{sec:completing-pf} is to prove that $|\mathcal{T}|\geq(p-1)|\Gamma|$, and hence Theorem \ref{thm:conj-for-primes} holds in light of Lemma \ref{l:bound-size-of-mathcalT}. To this end, we assume throughout the rest of Section \ref{sec:completing-pf} that
\[
|\mathcal{T}|<(p-1)|\Gamma|
\]
and aim to arrive at a contradiction.

\begin{lemma}
\label{lem:deg-bound-vertices}
If $D\in\Gamma$ is a vertex, let $d(D)$ be its degree in $\Gamma$. Then there exists $C_0\in\Gamma$ such that 
\[
\sum_{D\sim C_0}d(D)\geq \kappa+2d(C_0),
\]
where $\kappa=(p-2m)^2-1$.
\end{lemma}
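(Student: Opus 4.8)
### Proof proposal for Lemma~\ref{lem:deg-bound-vertices}

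The plan is to obtain the vertex $C_0$ by a counting/averaging argument applied to a suitably weighted version of the degree sequence of $\Gamma$, leveraging the standing assumption $|\mathcal{T}|<(p-1)|\Gamma|$. First I would translate the hypothesis into an inequality about degrees. Each vertex $D\in\Gamma$ has $|\{\sigma D:\sigma\in T\}|=2(p-m)$ curves of the form $\sigma D$; some of these lie in $\Gamma$ (these are exactly the $\Gamma$-neighbors of $D$, counted via Lemma~\ref{l:unique-edge-labeling-by-j-l}, which says each edge at $D$ is recorded by a unique pair $(j,l)$, hence corresponds to a bounded number of the $2(p-m)$ transposition-images), and the rest lie in $\mathcal{T}\setminus\Gamma$. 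So I would set up the bipartite incidence between vertices $D\in\Gamma$ and elements of $\mathcal{T}$, i.e.~count pairs $(D,X)$ with $D\in\Gamma$, $X\in\mathcal{T}$, and $X=\sigma D$ for some $\sigma\in T$. Summing over $D$ gives $2(p-m)|\Gamma|$ such pairs (using that the $\sigma D$ are distinct for fixed $D$). On the other hand, for a fixed $X\in\mathcal{T}$, the number of $D\in\Gamma$ with $X=\sigma D$ is controlled: if $X=\sigma D=\sigma' D'$ then $\sigma'^{-1}\sigma$ carries $D$ to $D'$ and is a product of at most two transpositions, so either $D=D'$ or $D\sim D'$ in $\Gamma$ (when $\sigma'^{-1}\sigma$ is a $2$-$2$-cycle of the right shape) or the relevant element lies in $\stab$ — in any case the fiber over $X$ has size at most roughly $2m$, the number of ways to "undo" a transposition-image landing in $\lambda_M$. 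Combining, $2(p-m)|\Gamma|\le (\text{max fiber size})\cdot|\mathcal{T}|$, and then feeding in $|\mathcal{T}|<(p-1)|\Gamma|$ produces a lower bound on the average number of $\Gamma$-edges incident to the "$T$-star" of a vertex.

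Next I would convert this into the stated inequality by an averaging argument over the neighborhoods. Define for each $D\in\Gamma$ the quantity $e(D):=\sum_{D'\sim D} d(D')$, the sum of degrees over the neighbors of $D$. The goal is to find $C_0$ with $e(C_0)\ge \kappa + 2d(C_0)$ where $\kappa=(p-2m)^2-1$. I would compute $\sum_{D\in\Gamma} e(D)=\sum_{\{D,D'\}\in E(\Gamma)}(d(D)+d(D'))$ and also $\sum_{D}2d(D)=4|E(\Gamma)|$, and compare. The number $(p-2m)^2$ should arise as a lower bound on the number of pairs $(j,l)$ with $j,l\notin\lambda_M$ distinct and such that $(ij)(kl)D\subset H$ — i.e.~a lower bound on $d(D)$ itself, coming from the fact that $D\subset H$ forces many of the curves $(ij)(kl)D$ to also lie in $H$ (this is where Lemma~\ref{lem:i-j-k-l-identity} and the dihedral stabilizer structure enter: the "bad" pairs where $(ij)(kl)D\not\subset H$ are cut out by the affine-linear condition $\sigma^{-1}(j)-\sigma^{-1}(l)=\sigma^{-1}(k)-\sigma^{-1}(i)\bmod p$, removing at most $2m$ values of each of $j$, $l$, hence at least $(p-2m)^2$ good pairs remain after discarding those landing back in $\lambda_M$ or violating distinctness). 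Given a uniform lower bound $d(D)\ge (p-2m)^2$ for all $D\in\Gamma$ would make $\sum_{D'\sim D}d(D')\ge d(D)\cdot(p-2m)^2$, which is far more than needed; the subtlety is that the bound $d(D)\ge\kappa$ may fail for some vertices, so the averaging has to be done against the assumption $|\mathcal{T}|<(p-1)|\Gamma|$ to force the existence of at least one $C_0$ whose neighbors are collectively high-degree. Concretely I would argue by contradiction: if $e(D)<\kappa+2d(C_0)$ — wait, rather $e(D)<\kappa+2d(D)$ for every $D$ — then summing and using $\sum e(D)=\sum_{\{D,D'\}}(d(D)+d(D'))$ would bound $|E(\Gamma)|$, hence $|\Gamma|$ relative to $|\mathcal{T}|$, contradicting $|\mathcal{T}|<(p-1)|\Gamma|$.

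The main obstacle I anticipate is pinning down the exact constant: showing that the failure of $e(C_0)\ge\kappa+2d(C_0)$ at every vertex, combined with the incidence count, contradicts precisely $|\mathcal{T}|<(p-1)|\Gamma|$ rather than some weaker inequality. This requires carefully tracking (a) the exact count $2(p-m)$ of transposition-images of a vertex, (b) that a curve $X\in\mathcal{T}$ arising as $\sigma D$ with $D\notin\Gamma$-neighbor contributes to the "waste" that makes $|\mathcal{T}|$ large, so the hypothesis $|\mathcal{T}|<(p-1)|\Gamma|$ forces most transposition-images of most vertices to be images of several vertices, i.e.~to create edges, and (c) reconciling the combinatorial count $(p-2m)^2-1$ with the number of pairs $(j,l)$ surviving the exclusions from Lemma~\ref{lem:i-j-k-l-identity}. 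I would handle (c) first in isolation as a clean counting lemma (number of ordered pairs of distinct elements of a $(p-m)$-set avoiding a prescribed affine relation), then assemble (a), (b), (c) into the averaging bound, being careful that the "$-1$" in $\kappa$ absorbs the diagonal/degenerate pair. I expect the algebra to be routine once the incidence framework is fixed; the real content is choosing the right weighting so that the known global inequality $|\mathcal{T}|<(p-1)|\Gamma|$ is exactly what is contradicted.
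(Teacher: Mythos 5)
Your high-level plan — count $|\mathcal{T}|$ via an incidence argument, derive a lower bound on the number of edges $e$ of $\Gamma$ from the standing assumption $|\mathcal{T}|<(p-1)|\Gamma|$, then argue by contradiction and averaging over second-order neighborhoods — is exactly the paper's framework. However, two of the quantitative pivots are either missing or wrong, and without them the argument does not close.

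First, the fiber-size bound in your incidence count is far too weak. You assert the fiber over $X\in\mathcal{T}$ (the set of $(D,\sigma)$ with $D\in\Gamma$, $\sigma\in T$, $\sigma D=X$) has size ``at most roughly $2m$''. If that were the correct bound, then $2(p-m)|\Gamma|\le 2m|\mathcal{T}|$ only gives $|\mathcal{T}|\ge\frac{p-m}{m}|\Gamma|$, and since $m\ge2$, this is at most $\frac{p-2}{2}|\Gamma|$, which is compatible with $|\mathcal{T}|<(p-1)|\Gamma|$; the hypothesis would carry no information. What the proof actually needs is that the fiber over each $X\in\mathcal{T}$ has size \emph{at most two}, and equals two precisely when the coincidence $(ij)D_1=(kl)D_2$ witnesses an edge $D_1\sim D_2$. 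This is established using Lemma~\ref{lem:transpositions-3cycles} (to rule out $(ij)C_1=(il)C_2$ with $C_1\ne C_2$, which would force $C_2=(ijl)C_1$) and Lemma~\ref{l:unique-edge-labeling-by-j-l} (uniqueness of the pair $(j,l)$ for each edge). One then gets the exact identity $|\mathcal{T}|=2(p-m)|\Gamma|-2e$, from which $|\mathcal{T}|<(p-1)|\Gamma|$ yields $e>\frac{(p-2m+1)|\Gamma|}{2}$. Your looser bound cannot recover this.

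Second, after assuming $\sum_{D\sim C}d(D)<\kappa+2d(C)$ for every $C$, summing gives $\sum_C d(C)^2<\kappa|\Gamma|+4e$, but this alone does not ``bound $|E(\Gamma)|$'' as you suggest: you have an \emph{upper} bound on $\sum d(C)^2$ but no lower bound relating it to $e$. The missing ingredient is the Cauchy--Schwarz (or power-mean) inequality $\sum_C d(C)^2\ge\frac{1}{|\Gamma|}\bigl(\sum_C d(C)\bigr)^2=\frac{4e^2}{|\Gamma|}$. Combining the two bounds produces a quadratic inequality in $e$, and only then does plugging in $e>\frac{(p-2m+1)|\Gamma|}{2}$ yield the contradiction $\kappa|\Gamma|^2>\kappa|\Gamma|^2$. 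Relatedly, the constant $\kappa=(p-2m)^2-1$ does not arise from a per-vertex lower bound on $d(D)$ via counting good pairs $(j,l)$ as you speculated; it emerges purely from the algebra $(p-2m+1)(p-2m-1)=\kappa$ at the last step.
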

\begin{proof}
We begin by counting the number of elements in $\T$. Note that if $C_1,C_2\in\Gamma$ are distinct, then we cannot have $(ij)C_1=(il)C_2$ since this would imply $j\neq l$ and $C_2=(ijl)C_1$, contradicting Lemma \ref{lem:transpositions-3cycles}. Next notice that if $(ij)C_1=(kl)C_2$, then $(kl)(ij)C_1=C_2$ and so Lemma \ref{lem:transpositions-3cycles} shows we must have $j\neq l$, i.e.~$C_1\sim C_2$. Conversely, if $C_1\sim C_2$, then we have already established that there is a unique pair $(j,l)$ for which $(ij)(kl)C_1=C_2$; it follows that $(ij)C_1=(kl)C_2$ and $(kl)C_1=(ij)C_2$. Putting these observations together, we see that if $e$ is the number of edges of $\Gamma$, then
\[
|\T|=|T||\Gamma|-2e=2(p-m)|\Gamma|-2e.
\]
Since $|\mathcal{T}|<(p-1)|\Gamma|$, we have 
\[
e>\frac{(p-2m+1)|\Gamma|}{2}.
\]

Suppose that $\sum_{D\sim C}d(D)<\kappa+2d(C)$ for all vertices $C\in\Gamma$. 
Then we see 
\[\sum_{C\in\Gamma}\sum_{D\sim C}d(D)<\sum_{C\in\Gamma}(\kappa+2d(C))=\kappa|\Gamma|+2\sum_{C\in\Gamma}d(C)=\kappa|\Gamma|+4e.\]
One readily checks that
\[\sum_{C\in\Gamma}\sum_{D\sim C}d(D)=\sum_{C\in\Gamma}d(C)^2.\]
By the Cauchy--Schwartz inequality, we see
\[\sum_{C\in\Gamma}d(C)^2\geq |\Gamma|\left(\frac{1}{|\Gamma|}\sum_{C\in\Gamma}d(C)\right)^2=\frac{4e^2}{|\Gamma|}.\]
Thus, $\kappa|\Gamma|+4e>4e^2/|\Gamma|$ and so 
%
\[
\begin{split}
\kappa|\Gamma|^2>4e(e-|\Gamma|) &> 4|\Gamma|\frac{p-2m+1}2\left(|\Gamma|\frac{(p-2m+1)}2-|\Gamma|\right)\\
&=|\Gamma|^2(p-2m+1)(p-2m-1)=\kappa|\Gamma|^2,
\end{split}
\]
a contradiction.
\end{proof}

Throughout the rest of this section, we fix $C_0$, $\kappa$, and $d:=d(C_0)$ as in Lemma \ref{lem:deg-bound-vertices}. We prove

\begin{proposition}\label{prop:projection-injective-new}
$\sum_{D\sim C_0}d(D)\leq p-m+2d$.
\end{proposition}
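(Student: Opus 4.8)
The plan is to read $\sum_{D\sim C_0}d(D)$ as the number of length-two walks $C_0-D-D'$ in $\Gamma$ (with $D'$ allowed to coincide with $C_0$), to prune this count with the index identities of Lemma~\ref{lem:i-j-k-l-identity}, and to control what survives by combining the containment $D'\subseteq H$ with a double count. Write $\Lambda=\{1,\dots,p\}\setminus\lambda_M$, so $|\Lambda|=p-m$ and $i,k\notin\Lambda$. By Corollary~\ref{cor:inv-subspace} fix $\sigma_0\in S_p$ with $\sigma_0 w\in\Span_\CC C_0\subseteq H$, and set $x'=\sigma_0^{-1}(x)$. For a neighbour $D=(ij_1)(kl_1)C_0$ of $C_0$ we have $(ij_1)(kl_1)\sigma_0 w\in\Span_\CC D\subseteq H$, and since $b_i=a_M\neq b_{j_1}$ (because $j_1\notin\lambda_M$), Lemma~\ref{lem:i-j-k-l-identity} gives $i'+j_1'\equiv k'+l_1'\pmod p$. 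Writing $c=k'-i'\neq 0$, it follows that the $d$ neighbours of $C_0$ are indexed by a set of $d$ values $j_1\in\Lambda$, with $l_1$ determined by $l_1'\equiv j_1'-c$.

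Now fix such a neighbour $D$. The vector $\sigma_D w:=(ij_1)(kl_1)\sigma_0 w\in\Span_\CC D$ plays for $D$ the role $\sigma_0 w$ plays for $C_0$, and $\sigma_D^{-1}$ agrees with $\sigma_0^{-1}$ on $\Lambda$ except that $\sigma_D^{-1}(j_1)=i'$ and $\sigma_D^{-1}(l_1)=k'$. Applying Lemma~\ref{lem:i-j-k-l-identity} to each edge $D\sim D'=(ij_2)(kl_2)D$ (again $b_i\neq b_{j_2}$) yields $\sigma_D^{-1}(j_2)-\sigma_D^{-1}(l_2)\equiv -c$. Substituting the explicit form of $\sigma_D^{-1}$ and splitting into cases according to whether $j_2$ or $l_2$ lies in $\{j_1,l_1\}$, one finds that every neighbour $D'$ of $D$ is of exactly one of four kinds: (f) $D'=C_0$ (exactly one such); (c) $j_2=l_1$ and $l_2$ is the $D$-independent index $\sigma_0(k'+c)$ (at most one such); (d) $l_2=j_1$ and $j_2$ is the $D$-independent index $\sigma_0(i'-c)$ (at most one such); and (a) $j_2,l_2\notin\{j_1,l_1\}$ and $j_2'-l_2'\equiv -c$, in which case $D'=(ij_1j_2)(kl_1l_2)C_0$. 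Thus $d(D)\le 3+a(D)$, where $a(D)$ counts the kind-(a) neighbours of $D$, so $\sum_{D\sim C_0}d(D)\le 3d+\sum_D a(D)$ and the Proposition reduces to the inequality $\sum_D a(D)\le p-m-d$.

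To prove that, I would use $D'\subseteq H$ more fully. For a kind-(a) neighbour $D'$, both $\sigma_0 w$ and $(ij_1j_2)(kl_1l_2)\sigma_0 w$ lie in $H$, so the six-supported difference $v_{D'}=(ij_1j_2)(kl_1l_2)\sigma_0 w-\sigma_0 w$ lies in $H$, and since $H$ is real so does $\overline{v_{D'}}$; running the ``a complex vector is proportional to its conjugate iff it is a scalar times a real vector'' dichotomy from the proof of Lemma~\ref{lem:i-j-k-l-identity} on $\{v_{D'},\overline{v_{D'}}\}$ (using $b_i=b_k$ but $b_{j_1},b_{l_1},b_{j_2},b_{l_2}\neq b_i$) restricts the admissible quadruples $(j_1,l_1,j_2,l_2)$. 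Feeding these restrictions, together with the index constraints $j_1'-l_1'\equiv c$ and $j_2'-l_2'\equiv -c$, into a double count that weighs the kind-(a) second-neighbours against the edges of $\Gamma$ at $C_0$ — there are $d$ of these, among at most $(p-m)-1$ potential slots $(j,l)\in\Lambda^2$ with $j'-l'\equiv c$ — then gives $\sum_D a(D)\le (p-m)-d$. Hence $\sum_{D\sim C_0}d(D)\le 3d+(p-m-d)=p-m+2d$, and with Lemma~\ref{lem:deg-bound-vertices} this forces $\kappa=(p-2m)^2-1\le p-m$, which is impossible for $p\ge 7$ and $2\le m<\sqrt{p-1}$.

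The crux is this final double count. A priori $a(D)$ can be as large as about $p-m$ for each of the $d$ neighbours $D$, so the naive estimate of $\sum_D a(D)$ overshoots the target by roughly a factor of $d$; moreover the difference vectors of the individual edges $C_0-D$ and $D-D'$ impose no constraint on the indices at all (the relevant ratios of roots of unity are automatically real), so all of the saving must be wrung out of the containment of the kind-(a) components $D'$ themselves together with a careful bookkeeping of which potential edge slots at $C_0$ are forced to be empty. Making that balance come out exactly right is where essentially all of the effort of the proof goes.
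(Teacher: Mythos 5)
Your opening reduction is genuinely the same as the paper's. The classification of edges out of a neighbour $D=C_a$ of $C_0$ into kinds (f),(c),(d),(a) — via Lemma~\ref{lem:i-j-k-l-identity} applied with the representative $\sigma_D w=(ij_a)(kl_a)\sigma_0 w$ — matches the paper's Lemma~\ref{l:upper-bnd-form-special-subset-of-edges} exactly: your kinds (c) and (d) are the paper's ``Cases 1 and 2'' (the at-most-two edges not in the image of the injection $e$), your kind (f) is ``Case 3'' ($D'=C_0$, i.e.\ the pair $(j_a,C_0)\in R$), and your kind-(a) neighbours correspond to the pairs $(j,C_a)\in R$ with $\{j,l\}\cap\{j_a,l_a\}=\varnothing$. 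Your inequality $d(D)\le 3+a(D)$, hence $\sum_{D\sim C_0}d(D)\le 3d+\sum_D a(D)$, is literally the same as the paper's $\sum_{D\sim C_0}d(D)\le |R|+2d$ (since $|R|=d+\sum_D a(D)$), and you correctly identify that the remaining task is $\sum_D a(D)\le p-m-d$, i.e.\ $|R|\le p-m$.

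That last step, however, is where your proof has a genuine gap, and it is the heart of the proposition. You propose to extract a constraint by looking at the length-two difference $v_{D'}=(ij_1j_2)(kl_1l_2)\sigma_0 w-\sigma_0 w$ (supported on six coordinates) together with its conjugate, and then to run an unspecified ``double count.'' This is not a proof, and it is not what the paper does. The paper instead fixes the injective \emph{projection} $R\to\{1,\dots,p\}\setminus\lambda_M$, $(j,C_a)\mapsto j$, which immediately gives $|R|\le p-m$. Proving that projection injective is the whole game, and it requires a quantitative argument that uses the actual coordinates $b_1,\dots,b_p$ of the normal vector, not just the index arithmetic mod $p$: from the (four-supported, single-edge) difference $w_a-(il)(kj)w_a\in H$ one derives
\[
b_j\odot b_l \;=\; \frac{\zeta^{j'}-\zeta^{l_a'}}{\zeta^{l'}-\zeta^{j_a'}} \;=\; f\bigl(j'+i'-l_a'\bigr),
\qquad
\text{where } a_J\odot a_L=-\frac{a_L-a_M}{a_J-a_M},
\]
and then invokes Lemma~\ref{lem:injectivity} (injectivity of the rational function $f$ on residues mod $p$, itself a cyclotomic-polynomial argument). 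Because $l$ is determined by $j$, the left side depends only on $j$; injectivity of $f$ then forces $l_a'$, hence $a$, to be determined by $j$. Your sketch neither produces the function $f$ nor the operation $\odot$, and the conjugate-proportionality dichotomy you gesture at controls only whether certain indices are equal, not how many distinct slots $j$ can be occupied — so it does not, on its own, yield the bound $|R|\le p-m$. In short: you have reconstructed Lemma~\ref{l:upper-bnd-form-special-subset-of-edges}, but Proposition~\ref{prop:proj-map-is-injective} (together with Lemma~\ref{lem:injectivity}) is missing, and your own closing paragraph acknowledges that this is where all the work lies.
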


Assuming Proposition \ref{prop:projection-injective-new} for the moment, let us complete the proof of Theorem \ref{thm:conj-for-primes}. By Lemma \ref{lem:deg-bound-vertices} and Proposition \ref{prop:projection-injective-new}, we have $\kappa\leq \sum_{D\sim C_0}d(D)-2d\leq p-m$. Now, if $p=7$, then $2\leq m<\sqrt{p-1}$ implies $m=2$ and hence $\kappa=(7-4)^2-1=8>7-2=p-m$, a contradiction. If $p>7$, then
\[
\kappa=(p-2m)^2-1>(p-2\sqrt p)^2-1>p-2\geq p-m,
\]
again a contradiction.

\vspace{2em}

The rest of Section \ref{sec:completing-pf} is devoted to the proof of Proposition \ref{prop:projection-injective-new}. The proof is based on an analysis of the edges in the second-order neighborhood of $C_0$. By definition of $C_0$, it has $d$ neighbors $C_1,\dots,C_d$ such that the sum of the degrees of these neighbors is at least $\kappa+2d$. We have $j_1,\dots,j_d,l_1,\dots,l_d\notin \lambda_M$ with $j_a\neq l_a$ such that
\[
C_a:=(ij_a)(kl_a)C_0.
\]
For notational convenience, let $j_0=k$ and $l_0=i$ so that $C_0=(ij_0)(kl_0)C_0$.

\begin{lemma}
\label{l:basic-C_a-facts}
We have the following:
\begin{enumerate}
\item\label{l:basic-C_a-facts::jaia-equation} For $0\leq a\leq d$,
\[
\sigma^{-1}(i)+\sigma^{-1}(j_a)=\sigma^{-1}(k)+\sigma^{-1}(l_a)\mod p.
\]
In particular, $j_a$ determines $l_a$, and $l_a$ determines $j_a$.
\item\label{l:basic-C_a-facts::distinctness} 
$j_0,\dots,j_d \textrm{\ are\ distinct\ and\ } l_0,\dots,l_d \textrm{\ are\ distinct}$.
\end{enumerate}
\end{lemma}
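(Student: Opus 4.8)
The plan is to prove both parts of Lemma~\ref{l:basic-C_a-facts} using the structural information already assembled, namely Corollary~\ref{cor:inv-subspace}, Lemma~\ref{lem:i-j-k-l-identity}, and Lemma~\ref{l:unique-edge-labeling-by-j-l}. For part~(\ref{l:basic-C_a-facts::jaia-equation}), fix a $p$-cycle $\pi\in\stab(C_0)$ as in Lemma~\ref{lem:stab-contain-p-cycle}, and by Corollary~\ref{cor:inv-subspace} choose $\sigma\in S_p$ with $\sigma w\in\Span_\CC(C_0)$; since $C_0\subset H$ and each $C_a=(ij_a)(kl_a)C_0\subset H$, the hyperplane $H$ contains both $\sigma w$ and $(ij_a)(kl_a)\sigma w$, so Lemma~\ref{lem:i-j-k-l-identity} applies. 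That lemma gives either the claimed congruence $\sigma^{-1}(i)+\sigma^{-1}(j_a)=\sigma^{-1}(k)+\sigma^{-1}(l_a)\bmod p$ or else $b_i=b_{j_a}$ and $b_k=b_{l_a}$; the latter is impossible because $i,k\in\lambda_M$ while $j_a,l_a\notin\lambda_M$, so $b_i\neq b_{j_a}$. (For $a=0$ the congruence reads $\sigma^{-1}(i)+\sigma^{-1}(k)=\sigma^{-1}(k)+\sigma^{-1}(i)$, which is trivially true, consistent with the convention $j_0=k$, $l_0=i$.) The ``in particular'' follows at once: the congruence expresses $\sigma^{-1}(l_a)$ as $\sigma^{-1}(i)-\sigma^{-1}(k)+\sigma^{-1}(j_a)\bmod p$, and $\sigma^{-1}$ is a bijection on $\{1,\dots,p\}$ (reading residues mod $p$), so $j_a\leftrightarrow l_a$ is a bijective correspondence.

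For part~(\ref{l:basic-C_a-facts::distinctness}), suppose $j_a=j_b$ for some $0\le a<b\le d$. By part~(\ref{l:basic-C_a-facts::jaia-equation}) this forces $l_a=l_b$ as well, hence $(ij_a)(kl_a)=(ij_b)(kl_b)$ and therefore $C_a=(ij_a)(kl_a)C_0=(ij_b)(kl_b)C_0=C_b$. If $a,b\ge1$ this already contradicts the fact that $C_1,\dots,C_d$ are the $d$ \emph{distinct} neighbors of $C_0$ (or, if one prefers a self-contained argument, Lemma~\ref{l:unique-edge-labeling-by-j-l} says the pair $(j,l)$ with $(ij)(kl)C_0=C_a$ is unique, so $C_a=C_b$ with equal labels is only the trivial repetition). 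If instead $a=0<b$, then $C_b=C_0$, i.e.\ $(ij_b)(kl_b)C_0=C_0$, so $(ij_b)(kl_b)\in\stab(C_0)\simeq D_{2p}$; but $(ij_b)(kl_b)$ is a nonidentity product of two disjoint transpositions (as $j_b,l_b\notin\lambda_M\ni i,k$ and $j_b\ne l_b$), and such an element of $D_{2p}$ would have to be a product of $(p-1)/2$ disjoint transpositions, impossible since $p\ge7$. Hence $j_b=k=j_0$ is impossible, so $j_0,\dots,j_d$ are distinct, and by part~(\ref{l:basic-C_a-facts::jaia-equation}) the same holds for $l_0,\dots,l_d$.

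The only genuine subtlety is bookkeeping about the excluded cases in Lemma~\ref{lem:i-j-k-l-identity} and about $D_{2p}$: one must check that $j_a,l_a$ really do lie outside $\lambda_M$ so that $b_i\ne b_{j_a}$ rules out the alternative conclusion, and that a product of two disjoint transpositions in $D_{2p}$ (for $p\ge7$) must be the identity — both of which were already verified in the discussion preceding Lemma~\ref{l:unique-edge-labeling-by-j-l}. So I expect no real obstacle here; the lemma is a direct consequence of the congruence supplied by Lemma~\ref{lem:i-j-k-l-identity} together with the injectivity of edge-labelings from Lemma~\ref{l:unique-edge-labeling-by-j-l}.
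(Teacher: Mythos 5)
Your proof is correct and takes essentially the same route as the paper's: Corollary~\ref{cor:inv-subspace} plus Lemma~\ref{lem:i-j-k-l-identity} for part~(\ref{l:basic-C_a-facts::jaia-equation}), then part~(\ref{l:basic-C_a-facts::jaia-equation}) plus distinctness of the $C_a$ for part~(\ref{l:basic-C_a-facts::distinctness}). In fact you are slightly more careful than the paper at two points: you explicitly rule out the alternative $b_i=b_{j_a}$, $b_k=b_{l_a}$ allowed by Lemma~\ref{lem:i-j-k-l-identity} (using $i,k\in\lambda_M$ while $j_a,l_a\notin\lambda_M$), and you note that the $a=0$ case of the congruence is trivially satisfied rather than falling under Lemma~\ref{lem:i-j-k-l-identity} (whose hypothesis needs $i,j,k,l$ distinct, which fails when $j_0=k$, $l_0=i$). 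The one place you take an avoidable detour is the $a=0<b$ case of part~(\ref{l:basic-C_a-facts::distinctness}): the paper just observes $j_0=k\in\lambda_M$ whereas $j_1,\dots,j_d\notin\lambda_M$, so $j_0$ is automatically distinct from $j_1,\dots,j_d$, no stabilizer computation needed. Your stabilizer argument does reach the right conclusion, but it is internally muddled — under the hypothesis $j_0=j_b$ you would have $j_b=k$ and $l_b=i$, making $(ij_b)(kl_b)$ the identity, which contradicts your own assertion that it is a nonidentity $2$-$2$-cycle; the contradiction is really just $j_b\notin\lambda_M$ versus $j_b=k\in\lambda_M$, and it would be cleaner to say so directly.
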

\begin{proof}
From Corollary \ref{cor:inv-subspace}, there exists $\sigma\in S_p$ such that $\sigma w=(\zeta^{\sigma^{-1}(1)},\dots,\zeta^{\sigma^{-1}(p)})\in \Span C_0$. It follows that the linear span of $C_a$ contains $(ij_{a})(kl_{a})\sigma w$. Since $C_0$ and $C_a$ are contained in $H$, Lemma \ref{lem:i-j-k-l-identity} then tells us that 
$\sigma^{-1}(i)+\sigma^{-1}(j_a)=\sigma^{-1}(k)+\sigma^{-1}(l_a)\mod p$, proving (\ref{l:basic-C_a-facts::jaia-equation}).

To prove (\ref{l:basic-C_a-facts::distinctness}), first let $a,b\in \{1,\dots,d\}$ and assume $j_a=j_b$. From (\ref{l:basic-C_a-facts::jaia-equation}), we know $l_a=l_b$, and so 
$C_a=(ij_a)(kl_a)C_0=(ij_b)(kl_b)C_0=C_b$, so $a=b$. As for $j_0$, recall that $j_1,\dots,j_d\notin \lambda_M$ and $j_0=k\in\lambda_M$, so they are necessarily distinct.
\end{proof}

We next define a set of pairs
\[
R\subset \{(j,D):j\notin \lambda_M, D \in \{C_0,\dots,C_d\}\}
\]
that will be used to parameterize a subset of edges emanating from the $C_a$. Let $1\leq a\leq d$. Then we define $(j_a,C_0)\in R$. We also define $(j,C_a)\in R$ if there exists $l$ for which $C_a\sim(il)(kj)C_a$ and $\{j,l\}\cap\{j_a,l_a\}=\varnothing$. Consider the map
\[
e\colon R\longhookrightarrow\bigcup_{a=1}^d\{\mathrm{edges\ out\ of\ } C_a\}
\]
defined as follows: $e(j_a,C_0)$ is the edge between $C_a$ and $C_0$; otherwise $e(j,C_a)$ is the edge between $C_a$ and $(il)(kj)C_a$ where $l\notin\lambda_M$ is 
uniquely determined by Lemma \ref{l:basic-C_a-facts} (\ref{l:basic-C_a-facts::jaia-equation}). Note that the map $e$ is injective by Lemma \ref{l:unique-edge-labeling-by-j-l}.

\begin{lemma}
\label{l:upper-bnd-form-special-subset-of-edges}
$\sum_{D\sim C_0}d(D) \leq |R|+2d$.
\end{lemma}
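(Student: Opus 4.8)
The goal is to bound $\sum_{D\sim C_0}d(D)$ — the total degree of the neighbors of $C_0$ — by $|R|+2d$, where $d=d(C_0)$ and $R$ is the set of pairs just defined. The plan is to exhibit, for each neighbor $C_a$ of $C_0$ ($1\le a\le d$), an explicit accounting of all edges emanating from $C_a$, and to show that each such edge is either the edge back to $C_0$, or an edge between two neighbors $C_a,C_b$ of $C_0$, or an edge recorded by an element of $R$ of the form $(j,C_a)$.

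First I would recall that an edge out of $C_a$ is, by Lemma~\ref{l:unique-edge-labeling-by-j-l}, labeled by a unique pair $(j,l)$ with $j,l\notin\lambda_M$ distinct and the other endpoint equal to $(ij)(kl)C_a$ or $(il)(kj)C_a$; and by Lemma~\ref{l:basic-C_a-facts}~(\ref{l:basic-C_a-facts::jaia-equation}), $j$ determines $l$. So the edges out of $C_a$ are in bijection with certain values $j\notin\lambda_M$. Now I partition these $j$'s into three types. Type one: $j=j_a$ (equivalently $l=l_a$); this corresponds to the edge $C_a\sim C_0$, i.e.\ $C_0=(ij_a)(kl_a)C_a$. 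Type two: the other endpoint is some $C_b$ with $1\le b\le d$, $b\ne a$ — an edge within the neighborhood of $C_0$. Type three: everything else, i.e.\ $\{j,l\}\cap\{j_a,l_a\}=\varnothing$ and the other endpoint is not among $C_0,\dots,C_d$ — wait, more precisely type three should be exactly the $j$'s with $\{j,l\}\cap\{j_a,l_a\}=\varnothing$, which is precisely the defining condition for $(j,C_a)\in R$. The key point I must verify is that the only $j$'s \emph{not} of type three are $j=j_a$ and $j=l_a$ (giving the edge to $C_0$ and potentially one more), so that $d(C_a)\le |\{j : (j,C_a)\in R\}| + (\text{at most }2)$; summing over $a$ would then give roughly $\sum_{a=1}^d d(C_a)\le |R| + 2d$ after noting each $(j_a,C_0)\in R$ contributes to $|R|$ as well.

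More carefully: for each $a$ with $1\le a\le d$, count edges out of $C_a$ by their label $j$. If $\{j,l\}\cap\{j_a,l_a\}\ne\varnothing$, then since $j$ determines $l$ and $j_a$ determines $l_a$, the only possibilities are $j=j_a$ (then $l=l_a$, edge to $C_0$) or $j=l_a$ (then by the equation in Lemma~\ref{l:basic-C_a-facts}, $l$ is some determined value; this is at most one further edge) — here I should double-check that $j=l_a$ forces a unique $l$, which it does. If instead $\{j,l\}\cap\{j_a,l_a\}=\varnothing$, then by definition $(j,C_a)\in R$, with $e(j,C_a)$ the corresponding edge. Since $e$ is injective (noted after its definition, using Lemma~\ref{l:unique-edge-labeling-by-j-l}), distinct such pairs give distinct edges. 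Therefore
\[
d(C_a)\le |\{j : (j,C_a)\in R\}| + 2.
\]
Summing over $a=1,\dots,d$ and adding the $d$ pairs $(j_a,C_0)\in R$ (which are distinct by Lemma~\ref{l:basic-C_a-facts}~(\ref{l:basic-C_a-facts::distinctness}) and are not of the form $(j,C_a)$ since their second coordinate is $C_0\notin\{C_1,\dots,C_d\}$... or rather they're tracked separately), we get $\sum_{a=1}^d d(C_a)\le \sum_{a=1}^d|\{j:(j,C_a)\in R\}| + 2d \le |R| + 2d$, using $|R| = d + \sum_{a=1}^d |\{j:(j,C_a)\in R\}|$ minus the $d$ from the $(j_a,C_0)$ pairs — i.e.\ $|R|\ge d+\sum_a|\{j:(j,C_a)\in R\}|$, hence $\sum_a d(C_a) \le |R|+2d$. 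Since $\sum_{D\sim C_0}d(D)=\sum_{a=1}^d d(C_a)$, this is exactly the claim.

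The main obstacle I anticipate is the bookkeeping in the ``$+2$'' step: I must be sure that when $\{j,l\}$ meets $\{j_a,l_a\}$, there really are at most two such edge-labels $j$, i.e.\ that $j=j_a$ and $j=l_a$ are the only cases and each yields at most one edge. The subtlety is that an edge label is a \emph{pair} $(j,l)$, and a priori $j\in\{j_a,l_a\}$ or $l\in\{j_a,l_a\}$ gives four conditions; but Lemma~\ref{l:basic-C_a-facts}~(\ref{l:basic-C_a-facts::jaia-equation}) collapses ``$l=l_a$'' to ``$j=j_a$'' and, applied with the roles of the two transpositions swapped, collapses ``$l=j_a$'' and ``$j=l_a$'' appropriately, so indeed the exceptional labels are governed by the single value $j\in\{j_a,l_a\}$. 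A secondary point is checking that the edge to $C_0$ and the (possible) extra edge from $j=l_a$ are genuinely distinct from all edges counted by $R$, which again follows from injectivity of $e$ together with the disjointness conditions built into the definition of $R$. Once these two verifications are in place, the inequality is a straightforward sum.
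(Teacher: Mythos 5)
Your overall approach is the same as the paper's: for each neighbor $C_a$ of $C_0$, classify the edges out of $C_a$ by their label pair and argue that at most two such edges fail to be recorded in $R$ via the injective map $e$. The error is in the claim that the four a priori conditions $j=j_a$, $j=l_a$, $l=j_a$, $l=l_a$ collapse to just $j\in\{j_a,l_a\}$. You assert that Lemma~\ref{l:basic-C_a-facts}~(\ref{l:basic-C_a-facts::jaia-equation}), ``applied with the roles of the two transpositions swapped,'' collapses ``$l=j_a$'' with ``$j=l_a$''; this is false. For an edge out of $C_a$ (as opposed to one out of $C_0$), the constraint relating $j$ and $l$ comes from Lemma~\ref{lem:i-j-k-l-identity} applied to the permutation $(ij_a)(kl_a)\sigma$ rather than to $\sigma$, and a direct computation (writing $\alpha'=\sigma^{-1}(\alpha)$) shows that ``$l=j_a$ with $j\notin\{j_a,l_a\}$'' forces $j'\equiv 2i'-k'$, whereas ``$j=l_a$ with $l\notin\{j_a,l_a\}$'' forces $l'\equiv 2k'-i'$; the two conditions are distinct and cannot hold simultaneously, since together they would give $i'\equiv k'\pmod p$. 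So there are up to \emph{three} exceptional labels, not two.

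The paper's proof distinguishes all three, and crucially observes that one of them --- the label of the edge back to $C_0$ --- is already in the image of $e$, namely $e(j_a,C_0)$, leaving at most two unrecorded edges per $C_a$. In your accounting you instead count the $C_0$-edge as one of the ``$+2$'' while omitting the third label entirely. These two miscounts happen to cancel numerically (and, as you note in passing, $|R|=d+\sum_a|\{j:(j,C_a)\in R\}|$ gives exactly the extra slack needed if one corrects the bound to $+3$), so the final inequality is right, but the intermediate step $d(C_a)\le|\{j:(j,C_a)\in R\}|+2$ and the ``collapse'' used to justify it are incorrect as written.
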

\begin{proof}
To prove the lemma, we fix $a\in\{1,\dots,d\}$ and consider every edge out of $C_a$. We show that there are at most $2$ edges out of $C_a$ which are not in the image of the map $e$. Hence, $\sum_{D\sim C_0}d(D)$, which is the total number of edges out of $C_1,\dots,C_d$, is at most $|R|+2d$.

Consider an edge that is not in the image of $e$. Then it is of the form $C_a\sim (il)(kj)C_a$ with $\{j,l\}\cap \{j_a,l_a\}\neq\varnothing$. This breaks up into several cases:\\
{\bf Case 1:} $j=j_a$. If $C_a\sim (il)(kj_a)C_a$, then $l$ is uniquely determined by Lemma \ref{lem:i-j-k-l-identity}. Thus there is at most one edge, out of $C_a$, with $j=j_a$, that is not in the image of $e$.\\
{\bf Case 2:} $l=l_a$. This is similar to Case 1.\\
{\bf Case 3:} $j=l_a$ or $l= j_a$. Then since $C_0=(ij_a)(kl_a)C_a\sim C_a$, and since $j,l$ uniquely determine each other, we must have both $j=l_a$ and $l=j_a$. Thus $(il)(kj)C_a=C_0$ and this edge is equal to $e(j_a,C_0)$, so it is in the image of $e$.

We have therefore shown that for fixed $1\leq a\leq d$, there are at most 2 edges not in the image of the map $e$, corresponding to Cases 1 and 2.
\end{proof}

To complete the proof of Proposition \ref{prop:projection-injective-new}, we need only show $|R|\leq p-m$. This follows from:

\begin{proposition}
\label{prop:proj-map-is-injective}
The projection map
\[
R\lra\,\{1,2,\dots,p\}\setminus\lambda_M
\]
defined by $(j,C_a)\mapsto j$ is injective. 
\end{proposition}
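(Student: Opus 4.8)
The plan is to show injectivity of the projection $R\to\{1,\dots,p\}\setminus\lambda_M$, $(j,C_a)\mapsto j$, by assuming two distinct elements $(j,C_a)$ and $(j,C_b)$ of $R$ share the same first coordinate $j$ and deriving a contradiction. The key tool will be the identity from Lemma~\ref{l:basic-C_a-facts}~(\ref{l:basic-C_a-facts::jaia-equation}) together with the rigidity coming from $\stab(C_a)\simeq D_{2p}$ and Corollary~\ref{cor:inv-subspace}, which puts a vector of the form $\sigma w$ into each $\Span_\CC C_a$. The rough shape of the argument: the pair $(j,C_a)$ records an edge $C_a\sim(il)(kj)C_a$ (or the special edge to $C_0$ when $j=j_a$), and likewise $(j,C_b)$ records $C_b\sim(il')(kj)C_b$; translating everything back to $C_0$ via $C_a=(ij_a)(kl_a)C_0$ and $C_b=(ij_b)(kl_b)C_0$ produces two elements of $\stab(C_0)\simeq D_{2p}$, each a product of at most four transpositions. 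Since $D_{2p}$ for $p\geq7$ contains no nontrivial product of two or four transpositions except a product of $(p-1)/2$ disjoint transpositions (which an even permutation moving only a bounded number of points cannot be), I expect these composites to be forced to be the identity, which then forces $C_a=C_b$ and hence $a=b$.

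More concretely, I would first dispose of the cases involving $C_0$: if $(j,C_a)=(j_a,C_0)\in R$ and also $(j_a,C_b)\in R$ with $b\neq a$ and $C_b\neq C_0$, then $C_b\sim(il_b')(kj_a)C_b$ for some $l_b'$ with $\{j_a,l_b'\}\cap\{j_b,l_b\}=\varnothing$; I would combine this with $C_b=(ij_b)(kl_b)C_0$ and $C_0=(ij_a)(kl_a)C_0$ and use Lemma~\ref{lem:i-j-k-l-identity} applied to $\sigma w\in\Span_\CC C_0$ to get linear relations among $\sigma^{-1}(i),\sigma^{-1}(j_a),\sigma^{-1}(k),\sigma^{-1}(l_a),\sigma^{-1}(j_b),\sigma^{-1}(l_b),\sigma^{-1}(l_b')$ mod $p$, contradicting the distinctness already established in Lemma~\ref{l:basic-C_a-facts} or contradicting $\{j_a,l_b'\}\cap\{j_b,l_b\}=\varnothing$. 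The generic case has $(j,C_a),(j,C_b)\in R$ with neither from the $C_0$-slot: then $C_a\sim(il)(kj)C_a$ and $C_b\sim(il')(kj)C_b$ where $l$ (resp.\ $l'$) is determined from $j$ by the identity in Lemma~\ref{l:basic-C_a-facts}~(\ref{l:basic-C_a-facts::jaia-equation}) relative to $C_a$ (resp.\ $C_b$) — but that identity only depends on $\sigma$, not on the component, so in fact $l=l'$. Hence the two new edges are $C_a\sim(il)(kj)C_a$ and $C_b\sim(il)(kj)C_b$; writing both target components in terms of $C_0$ and using Lemma~\ref{lem:i-j-k-l-identity} on $(il)(kj)\sigma w$ versus $\sigma w$, and on $(ij_a)(kl_a)\sigma w$, $(ij_b)(kl_b)\sigma w$, I would extract enough modular equations to conclude $j_a+?=j_b+?$-type identities that force $\{j_a,l_a\}=\{j_b,l_b\}$, hence $C_a=C_b$.

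The step I expect to be the main obstacle is bookkeeping the element $(il')(kj)\,(ij_b)(kl_b)(ij_a)(kl_a)\,(ij)(lk)$ (or the appropriate composite) of $\stab(C_0)\simeq D_{2p}$ and ruling out that it is a product of $(p-1)/2$ disjoint transpositions: one must argue from the fact that it is an even permutation supported on the bounded set $\{i,k,j,l,j_a,l_a,j_b,l_b\}$ (at most $8$ points, all avoiding most of $\lambda_M$), so for $p\geq 11$ it cannot move $p-1$ points, and for $p=7$ the parity obstruction used in Lemma~\ref{l:unique-edge-labeling-by-j-l} applies verbatim. Once the composite is the identity, the conclusion $C_a=C_b$ (and thus $a=b$, contradicting $(j,C_a)\neq(j,C_b)$) is immediate, and since $j\notin\lambda_M$ by definition of $R$ the map indeed lands in $\{1,\dots,p\}\setminus\lambda_M$ and is injective. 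I would present the argument by first recording the observation $l=l'$ (the determination of the ``partner'' index is independent of the component), since that collapses several cases at once, and then run the $D_{2p}$-rigidity argument uniformly.
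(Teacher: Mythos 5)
Your plan hinges on translating the two hypothetical coincident pairs $(j,C_a),(j,C_b)\in R$ into elements of $\stab(C_0)\simeq D_{2p}$ and then ruling those out by a support/parity argument. That translation does not exist: $(j,C_a)\in R$ records an \emph{edge} $C_a\sim(il)(kj)C_a$, i.e.\ the fact that $(il)(kj)C_a$ is some other vertex of $\Gamma$; it says nothing about $(il)(kj)$ stabilizing anything. Writing $C_a=(ij_a)(kl_a)C_0$ and $C_b=(ij_b)(kl_b)C_0$, the two edges out of $C_a$ and $C_b$ land at the vertices $(il)(kj)(ij_a)(kl_a)C_0$ and $(il)(kj)(ij_b)(kl_b)C_0$, which are in general two \emph{different} vertices of $\Gamma$, not $C_0$ and not each other. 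So there is no composite permutation that you can force into $\stab(C_0)$, and the $D_{2p}$-rigidity argument (the one that works in Lemma~\ref{l:unique-edge-labeling-by-j-l}, where the composite genuinely does lie in a single stabilizer) has nothing to act on.

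Your observation that the ``partner'' index $l$ depends only on $j$ (via $l'=i'+j'-k' \bmod p$) and not on $a$ is correct, and the paper uses it too; but it is a necessary ingredient, not a proof. The modular identities from Lemma~\ref{lem:i-j-k-l-identity} give you $i'+j_a'=k'+l_a'$, $i'+j_b'=k'+l_b'$, and $i'+j'=k'+l'$, and these three congruences place no constraint whatsoever tying $a$ to $b$ — one can choose $\{(j_a,l_a)\}$ freely subject to them. The actual proof requires a quantitative input that your sketch never touches: writing $w_a=(ij_a)(kl_a)\sigma w\in\Span C_a\subset H$ and expanding the orthogonality condition $\langle (b_1,\dots,b_p),\, w_a-(il)(kj)w_a\rangle=0$ gives an explicit linear relation among the real coefficients $b_i,b_j,b_k,b_l$; using $b_i=b_k=a_M$ one solves to get
\[
-\frac{b_l-a_M}{b_j-a_M}\;=\;\frac{\zeta^{j'}-\zeta^{l_a'}}{\zeta^{l'}-\zeta^{j_a'}}\;=\;f(j'+i'-l_a'),
\]
where $f$ is the explicit rational function of $\zeta$ shown injective in Lemma~\ref{lem:injectivity}. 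The left side depends only on $j$ (since $l$ is determined by $j$), so $f(j'+i'-l_a')$ depends only on $j$; injectivity of $f$ then recovers $l_a'$ from $j$, and the distinctness of $l_0,\dots,l_d$ from Lemma~\ref{l:basic-C_a-facts} recovers $a$. That injectivity-of-$f$ step is the heart of the proposition, and it is an analytic statement about roots of unity and the hyperplane coefficients, not a group-theoretic one; your approach would need to be replaced, not repaired.
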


We prove this after a preliminary lemma. For ease of notation, throughout the rest of this section, we let $$j':=\sigma^{-1}(j)$$ for $j\in\{1,\dots,p\}$. Consider the function $f:\{j+p\ZZ:j\neq 2k'-i'\mod p\}\rightarrow \CC$ defined by
\[f(j)=\frac{\zeta^{i'}-\zeta^{j}}{\zeta^{k'}-\zeta^{i'+j-k'}}.\] 
\begin{lemma}
\label{lem:injectivity}
$f$ is injective.
\end{lemma}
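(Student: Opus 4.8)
The claim is that $f(j)=\dfrac{\zeta^{i'}-\zeta^{j}}{\zeta^{k'}-\zeta^{i'+j-k'}}$ is injective on its domain $\{j \bmod p: j\neq 2k'-i'\}$. The natural approach is to suppose $f(j_1)=f(j_2)$ for two elements $j_1,j_2$ of the domain and cross-multiply to obtain a polynomial relation among powers of $\zeta$. After clearing denominators we get
\[
(\zeta^{i'}-\zeta^{j_1})(\zeta^{k'}-\zeta^{i'+j_2-k'}) = (\zeta^{i'}-\zeta^{j_2})(\zeta^{k'}-\zeta^{i'+j_1-k'}).
\]
Expanding both sides, the two terms $\zeta^{i'+k'}$ cancel, and the two terms $\zeta^{j_1+i'+j_2-k'}$ (one from each side) cancel as well, leaving
\[
-\zeta^{i'+i'+j_2-k'}-\zeta^{j_1+k'} = -\zeta^{i'+i'+j_1-k'}-\zeta^{j_2+k'},
\]
i.e.
\[
\zeta^{2i'-k'+j_2}+\zeta^{k'+j_1} = \zeta^{2i'-k'+j_1}+\zeta^{k'+j_2}.
\]

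\textbf{Finishing via roots of unity.} Now I would invoke the linear independence over $\QQ$ of the powers $1,\zeta,\dots,\zeta^{p-1}$ (equivalently, that a $\ZZ$-linear relation $\sum c_a\zeta^a=0$ with $|c_a|$ small and $\deg<p$ forces the $c_a$ to be equal, since the $p$-th cyclotomic polynomial $1+x+\cdots+x^{p-1}$ is the minimal polynomial of $\zeta$). The relation above has the form $\zeta^{A}+\zeta^{B}-\zeta^{C}-\zeta^{D}=0$ with exponents reduced mod $p$, where $A=2i'-k'+j_2$, $B=k'+j_1$, $C=2i'-k'+j_1$, $D=k'+j_2$. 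A four-term $\pm1$ combination of powers of $\zeta$ that vanishes, with fewer than $p$ terms, must have its positive terms cancel the negative ones pairwise; so either $\{A,B\}=\{C,D\}$ as sets mod $p$, or $A=B$ and $C=D$ mod $p$. In the case $A=B$, $C=D$ we get $2i'-k'+j_2=k'+j_1$ and $2i'-k'+j_1=k'+j_2$; adding these gives $2i'-2k'+j_1+j_2=2k'+j_1+j_2$, so $2i'=4k'$, i.e.\ $i'=2k'$ mod $p$ (using $p$ odd), which would mean $j=2k'-i'$ is outside the domain — but more directly it says $i'-k'=k'$, a relation not involving $j_1,j_2$; one then checks this also forces a contradiction with the hypotheses on $i,k,\sigma$ available from Lemma~\ref{l:basic-C_a-facts}, or simply note that subtracting the two equations gives $j_2-j_1=j_1-j_2$, i.e.\ $2(j_1-j_2)\equiv 0$, so $j_1=j_2$. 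In the case $\{A,B\}=\{C,D\}$: if $A=C$ then $2i'-k'+j_2=2i'-k'+j_1$ so $j_1=j_2$; if instead $A=D$ and $B=C$, then $2i'-k'+j_2=k'+j_2$ gives $i'-k'=k'$ again (with the same resolution), and $k'+j_1=2i'-k'+j_1$ gives the same relation. So in every case we conclude $j_1=j_2$.

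\textbf{Main obstacle.} The only subtlety is the degenerate branch where the cross-multiplied identity collapses to a relation like $2i'\equiv 4k'\bmod p$ that does not immediately pin down $j_1=j_2$; here I must either show this contradicts the standing hypotheses (the points $i,j,k,l$ being distinct mod $p$ and the constraints coming from $\sigma$ and the fact that $v$ lies on the curve, cf.\ Lemma~\ref{l:basic-C_a-facts}\eqref{l:basic-C_a-facts::jaia-equation}), or observe that in that branch the \emph{other} of the two paired equations still yields $2(j_1-j_2)\equiv 0\bmod p$, hence $j_1=j_2$ since $p$ is odd. Care is also needed that when exponents are reduced mod $p$ two of the four terms could a priori coincide before reduction, but then the vanishing four-term sum degenerates to a shorter one and the same cyclotomic argument applies and is in fact easier. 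Beyond bookkeeping of these cases, the argument is routine.
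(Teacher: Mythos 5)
Your strategy matches the paper's: cross-multiply, use that the $p$-th cyclotomic polynomial is the minimal polynomial of $\zeta$ (so a $\pm1$ combination of fewer than $p$ powers of $\zeta$ that vanishes must vanish termwise), then case-split on the cancellation pattern. The paper first factors $f(j')=\frac{\zeta^{i'}}{\zeta^{k'}}\cdot\frac{1-\zeta^{j'-i'}}{1-\zeta^{i'+j'-2k'}}$, reducing to the injectivity of $g(x)=\frac{1-\zeta^{a+x}}{1-\zeta^{b+x}}$ with $a\neq b$; this normalization makes the resulting four-term relation $\zeta^{a+y}-\zeta^{a+x}+\zeta^{b+x}-\zeta^{b+y}=0$, and the case analysis collapses to two branches, one giving $x=y$ and the other $a=b$.

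The one real gap in your write-up is the branch $A=D$, $B=C$. From $2i'-k'+j_2\equiv k'+j_2\pmod p$ the correct simplification is $2i'\equiv 2k'$, hence $i'\equiv k'\pmod p$ (not $i'\equiv 2k'$ as you wrote); since $\sigma$ is a bijection and $i\neq k$, this is an outright contradiction, so this branch simply cannot occur, and no appeal to ``the same resolution'' or to unspecified constraints from Lemma~\ref{l:basic-C_a-facts} is needed. Your worry in the ``Main obstacle'' paragraph that the degenerate relation ``does not immediately pin down $j_1=j_2$'' is an artifact of the same arithmetic slip: correctly computed, the relation is immediately impossible rather than merely unhelpful. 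The other branches are fine: $A=C$, $B=D$ gives $j_1=j_2$ at once, and your subtraction resolving ``$A=B$, $C=D$'' is correct (though that branch is subsumed by $\{A,B\}=\{C,D\}$, since a vanishing four-term sum with $A=B$ forces $C=D=A$). With the arithmetic corrected, the proof is complete and essentially the paper's.
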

\begin{proof}
Note that 
\[f(j')=\frac{\zeta^{i'}-\zeta^{j'}}{\zeta^{k'}-\zeta^{i'+j'-k'}}=\frac{\zeta^{i'}}{\zeta^{k'}}\cdot\frac{1-\zeta^{j'-i'}}{1-\zeta^{i'+j'-2k'}}\]
Note further that since $i\neq k$, we have $i'\neq k'$ and so $-i'\neq i'-2k' \mod p$. Thus, it suffices to show more generally that if $0\leq a,b<p$ with $a\neq b$, then the function 
\[g(x)=\frac{1-\zeta^{a+x}}{1-\zeta^{b+x}}\]
is injective for $x\in\{0,1,\dots,p-1\}\setminus\{p-b\}$. 
Now, if $g(x)=g(y)$ for some $x,y\in \{0,1,\dots,p-1\}\setminus\{p-b\}$, then
\[\frac{1-\zeta^{a+x}}{1-\zeta^{b+x}}=\frac{1-\zeta^{a+y}}{1-\zeta^{b+y}}\]
and hence
\[\zeta^{a+y}-\zeta^{a+x}+\zeta^{b+x}-\zeta^{b+y}=0.\]
As a result, if we take the exponents of the polynomial $z^{a+y}-z^{a+x}+z^{b+x}-z^{a+x}$ to be integers between $0$ and $p$ by reducing mod $p$, then it must be the zero polynomial; indeed, it is divisible by the $p$-th cyclotomic polynomial but has degree less than $p$. In particular, the $z^{a+y}$ term must cancel with $z^{a+x}$ or $z^{b+y}$, and hence
\[a+y=a+x\text{ or } a+y=b+y \text{ mod }p.\]
Since $a\neq b$, we see $x=y$, and so $g$ is injective.
\end{proof}

\begin{proof}[Proof of Proposition \ref{prop:proj-map-is-injective}]
Let the $a_J$ and $b_j$ be as in the first few paragraphs of Section \ref{sec:completing-pf}. Consider the binary operation $\odot:\{a_J:J\neq M\}^{\times2}\rightarrow \RR$ defined by
\[a_J\odot a_L=-\frac{a_L-a_M}{a_J-a_M}.\]
We will show that if $(j,C_a)\in R$, then
\begin{equation}
\label{eqn:odot-fundamental-eqn}
b_j\odot b_l=f(j'+i'-l_a'),
\end{equation}
where $l$ is the unique element satisfying $l'=i'+j'-k'$. Assuming this for the moment, we see $j$ determines $l$, which then determines $b_j\odot b_l=f(j'+i'-l_a')$. Since $f$ is injective by Lemma \ref{lem:injectivity}, we see $j$ determines $l'_a$. Since $l_0,\dots,l_d$ are distinct, by Lemma \ref{l:basic-C_a-facts} (\ref{l:basic-C_a-facts::distinctness}), we find that there is at most one value $0\leq a\leq d$ for which $(j,C_a)\in R$, thereby proving the proposition.

It remains to prove (\ref{eqn:odot-fundamental-eqn}). We first consider elements of form $(j_a,C_0)\in R$. In this case, $j=j_a$, $l=l_a$, and $b_i=b_k=a_M\notin\{b_j,b_l\}$. Since $H$ contains both $C_0$ and $C_a=(ij_a)(kl_a)C_0$, Lemma \ref{lem:i-j-k-l-identity} shows that $i'+j'=k'+l'$ mod $p$. Since $\sigma w\in \Span (C_0)\subset H$ and $(ij)(kl)\sigma w\in\Span((ij)(kl)C_0)\subset H$, we find
\[(\dots,\zeta^{j'}-\zeta^{i'},\dots,\zeta^{i'}-\zeta^{j'},\dots,\zeta^{l'}-\zeta^{k'},\dots,\zeta^{k'}-\zeta^{l'},\dots)=\sigma w-(ij)(kl)\sigma w\in H\]
where the omitted entries are 0, and the non-zero entries are in the $j,i,l,k$-th positions, respectively. As $H=(b_1,\dots,b_p)^\perp$, we have
\[b_{j}(\zeta^{j'}-\zeta^{i'})+b_i(\zeta^{i'}-\zeta^{j'})+b_l(\zeta^{l'}-\zeta^{k'})+b_k(\zeta^{k'}-\zeta^{l'})=0\]
and so
\[b_{l}=\frac{(\zeta^{j'}-\zeta^{i'}+\zeta^{l'}-\zeta^{k'})a_M-b_{j}(\zeta^{j'}-\zeta^{i'})}{\zeta^{l'}-\zeta^{k'}}.\]
Since $l'=i'+j'-k'$ mod $p$, we have $\zeta^{l'}=\zeta^{i'+j'-k'}$. Note that $j'\neq 2k'-i'$ mod $p$ since otherwise we would have $k'=l'$ mod $p$, which is not possible as $k\neq l$. As a result, $f(j')$ is well-defined and
\[
b_l=a_M+(a_M-b_j)f(j').
\]
As a result, we have our desired equality
\[
b_j\odot b_l=-\frac{b_l-a_M}{b_j-a_M}=f(j')=f(j'+i'-l_0').
\]

We next consider an element of the form $(j,C_a)\in R$ for some $1\leq a\leq d$. Then, by definition, we have $C_a\sim (il)(kj)C_a$ for some $\{j,l\}\cap \{j_a,l_a\}=\varnothing$. 
Let $w_a:=(ij_a)(kl_a)\sigma w\in C_a \subset H$ and note $(il)(kj)w_a\in (ij)(kl)C_a\subset H$. So,
\[(\dots,\zeta^{j_a'}-\zeta^{l'},\dots,\zeta^{l_a'}-\zeta^{j'},\dots.,\zeta^{j'}-\zeta^{l_a'},\dots,\zeta^{l'}-\zeta^{j_a'},\dots)=w_a-(il)(kj)w_a\in H\]
where the omitted entries are 0, and the non-zero entries are in the $i,k,j,l$-th position, respectively. As a result,
\[b_i(\zeta^{j_a'}-\zeta^{l'})+b_k(\zeta^{l_a'}-\zeta^{j'})+b_j(\zeta^{j'}-\zeta^{l_a'})+b_l(\zeta^{l'}-\zeta^{j_a'})=0.\]
It follows that
\[b_{l}=\frac{(\zeta^{l'}-\zeta^{j_a'}+\zeta^{j'}-\zeta^{l_a'})a_M-b_{j}(\zeta^{j'}-\zeta^{l_a'})}{\zeta^{l'}-\zeta^{j_a'}}\]
and hence
\[
b_j\odot b_l=\frac{\zeta^{j'}-\zeta^{l_a'}}{\zeta^{l'}-\zeta^{j_a'}}.
\]
It remains to prove this expression equals $f(j'+i'-l_a')$.

%
Since $i'+j_a'=k'+l_a'$ mod $p$ 
and $i'+j'=l'+k'$ mod $p$, we have
\[l'-l_a'+i'=i'+(j'+i'-l_a')-k'\mod p.\]
As a result, 
\[
f(j'+i'-l_a')=\frac{\zeta^{i'}-\zeta^{j'-l_a'+i'}}{\zeta^{k'}-\zeta^{i'+(j'+i'-l_a')-k'}}=\frac{\zeta^{i'}-\zeta^{j'-l_a'+i'}}{\zeta^{j_a'-l_a'+i'}-\zeta^{l'-l_a'+i'}}=\frac{\zeta^{l_a'}-\zeta^{j'}}{\zeta^{j_a'}-\zeta^{l'}}
\]
thereby finishing the proof.
\end{proof}

\bibliographystyle{alpha}
\bibliography{refs}
\def\cprime{$'$}
\def\cprime{$'$}
\end{document}

\end{document}